\newcommand{\ie}{{\em i.e.\ }}
\newcommand{\eg}{{\em e.g.\ }}
\newtheorem{theorem}[subsection]{Theorem}
\newtheorem*{thm}{Theorem}
\newtheorem{lemma}[subsection]{Lemma}
\newtheorem{proposition}[subsection]{Proposition}
\newtheorem{corollary}[subsection]{Corollary}
\theoremstyle{definition}
\newtheorem{definition}[subsection]{Definition}
\newtheorem{notation}[subsection]{\textbf{Notation}}
\newtheorem{example}[subsection]{Example}
\newtheorem{examples}[subsection]{Examples}
\newtheorem{remark}[subsection]{\textbf{Remark}}
\numberwithin{equation}{section}
\newcommand{\N}{\mathbf{N}}
\newcommand{\Z}{\mathbf{Z}}
\newcommand{\ko}{\: , \;}
\newcommand{\ul}{\underline}
\newcommand{\we}{\wedge}
\newcommand{\che}{\vee}
\renewcommand{\tilde}[1]{\widetilde{#1}}
\newcommand{\ra}{\rightarrow}
\newcommand{\arr}[1]{\stackrel{#1}{\rightarrow}}
\newcommand{\opname}[1]{\operatorname{\mathsf{#1}}}
\newcommand{\Mod}{\opname{Mod}\nolimits}
\newcommand{\Tria}{{\opname{Tria}}}
\newcommand{\id}{\mathbf{1}}
\renewcommand{\L}{\mathbf{L}}
\newcommand{\im}{\opname{im}\nolimits}
\renewcommand{\ker}{\opname{ker}\nolimits}
\newcommand{\colim}{\colim}
\newcommand{\can}{\opname{can}}
\newcommand{\op}[1]{\opname{#1}\nolimits}
\newcommand{\ca}{{\mathcal A}}
\newcommand{\cb}{{\mathcal B}}
\newcommand{\cc}{{\mathcal C}}
\newcommand{\cd}{{\mathcal D}}
\newcommand{\ce}{{\mathcal E}}
\newcommand{\cF}{{\mathcal F}}
\newcommand{\cg}{{\mathcal G}}
\newcommand{\ch}{{\mathcal H}}
\newcommand{\cm}{{\mathcal M}}
\newcommand{\cn}{{\mathcal N}}
\newcommand{\cp}{{\mathcal P}}
\newcommand{\cR}{{\mathcal R}}
\newcommand{\cs}{{\mathcal S}}
\newcommand{\ct}{{\mathcal T}}
\newcommand{\cx}{{\mathcal X}}
\newcommand{\cy}{{\mathcal Y}}
\newcommand{\cz}{{\mathcal Z}}
\newcommand{\Hom}{\opname{Hom}}
\newcommand{\RHom}{\opname{RHom}}
\newcommand{\REnd}{\opname{REnd}}
\newcommand{\End}{\opname{End}}
\newcommand{\Ext}{\opname{Ext}}
\newcommand{\uni}{\mathbf{U}}
\newcommand{\tria}{\opname{tria}}
\newcommand{\per}{\opname{per}}
\newcommand{\thick}{\opname{thick}}
\begin{document}
\title{Generalized tilting theory}

\author{Pedro Nicol\'as}
\address{Pedro Nicol\'as, Universidad de Murcia, Facultad de Educaci\'on, Campus de Espinardo, 30100 Murcia, ESPA\~NA}
\email{pedronz@um.es}

\author{Manuel Saor\'in}
\address{Manuel Saor\'in, Departamento de Matem\'aticas\\
Universidad de Murcia, Aptdo. 4021\\
30100 Espinardo, Murcia\\
SPAIN}
\email{msaorinc@um.es}

\thanks{The authors are supported by research projects from the Ministerio de Econom\'ia y Competitividad of Spain (MTM2013-46837-P and MTM2016-77445-P) and from the Fundaci\'on 'S\'eneca' of Murcia (19880/GERM/15), both with a part of FEDER funds.}

\date{} 

\maketitle

\begin{center}
Dedicated to the memory of Michael Butler and Dieter Happel.
\end{center}

\tableofcontents

\section{Introduction}

The concept of tilting module has its roots in the work of  Bernstein, Gelfand and Ponomarev \cite{BGP} on reflection functors. They used these functors to construct all the indecomposable modules over a representation-finite hereditary algebra starting from the simple ones. The reflection functors were interpreted and generalized in more categorical terms in \cite{APR}, producing the first example of tilting modules, today called {\em APR-tilted modules}, although the term ``tilting module'' did not appear yet. Brenner and Butler \cite{BrennerButler} were the first to use the term  and gave a first list of conditions that a module over a finite dimensional algebra has to satisfy in order to be a tilting module. Alternative more polished lists, still in the context of finite dimensional algebras over a field, were given shortly after by Happel and Ringel \cite{HR} and by Bongartz \cite{Bo} and they laid the foundation of what is today called Tilting Theory. In essence, it was a generalization of Morita Theory. While this studied equivalences between categories of modules over two algebras (or rings), what is today known as the Tilting theorem \cite{ColbyFuller} states that if $T$ is a tilting module over the algebra $A$ and $B=\End_A(T)$, then 
$(\ct,\cF)=(\op{ker}\op{Ext}_A^1(T,?),\op{Ker}\Hom_A(T,?))$  and $(\cx,\cy)=(\op{ker}(?\otimes_BT),\op{ker}\op{Tor}_1^B(?,T))$ are torsion theories in $\Mod A$ and $\Mod B$, respectively, and there are equivalences of categories $\Hom_A(T,?):\ct\arr{\sim}\cy:?\otimes_BT$ and 
$\op{Ext}_A^1(T,?):\cF\arr{\sim}\cx:\op{Tor}_1^B(?,T)$.

The original tilting modules were of projective dimension less than or equal to $1$. A few years later, Miyashita \cite{Miy} generalized the concept allowing modules of any finite projective dimension and over any ring, for which a generalization of the Tilting theorem was still valid. The most frequently used terminology nowadays, that we shall follow in this paper, calls the modules introduced
by Miyashita {\em classical $n$-tilting modules} (see Definition~\ref{tilting module}), which coincide  in the case $n=1$ with the ones introduced by Brenner-Butler and Happel-Ringel.

The tilting modules of Miyashita, and hence their precursors, are modules which have a finite projective resolution with finitely generated terms. In particular, they are always finitely generated. It seems that Colpi and Trlifaj \cite{CT} were the first to generalize the notion of (1-)tilting module, as introduced by
Brenner-Butler and Happel-Ringel, to not necessarily finitely generated modules. Later on, Angeleri-H\"{u}gel and Coelho \cite{Ang-Coelho} did the same with the concept of Miyashita and used for the first time the term ``(n-)tilting module'' as it is used nowadays (see Definition~\ref{tilting module}).

As Morita Theory had its contravariant version, the so-called Morita duality, parallel to Tilting Theory, in its finitely and infinitely generated versions, a contravariant version developed throughout the years and the concept of cotilting module entered the scene. It seems that it was Colby \cite{Colby} the first to use the term to name what is today known as classical 1-cotilting module. The general version, for arbitrary (finite) injective dimension and not necessarily finitely generated modules, seems to have been first used in \cite{Ang-Coelho}.

It is rare to see an algebraic theory that has been more fruitful than Tilting Theory, convening to consider within it also its contravariant version. It has impregnated several fields of Mathematics. Without intention to be exhaustive, we just mention a few. In the representation theory of finite dimensional algebras the theory gave rise to new classes of algebras (\eg  tilted, piecewise hereditary, tubular, quasi-tilted, etc.) whose representation type (finite, tame or wild) and structure of their modules categories could be effectively determined by applying the corresponding knowledge over path algebras of quivers (see \cite{HR}, \cite{Ri}, \cite[Chapters VIII, IX]{ASS}, \cite{Si-Skow}, \cite{HRS1}, \cite{HRS2}). It also helped to classify homologically well-behaved subcategories (\eg resolving, contra- and co-variantly finite, torsion, cotorsion, etc) either of the category $\op{mod}A$ of finitely generated modules or of the category $\Mod A$ of all  modules (see \cite{AR}, \cite{Ang-Coelho}, \cite{KS}, \cite{BK}). It helped also to tackle from a new angle long-standing homological open questions, like the finitistic dimension  and the Nakayama conjectures (see \cite[\S IV.3]{BR}, \cite{Ang-Trl}).

The result in \cite{Ang-Trl} about the finitistic dimension conjecture is a particular case of a more general one which shows one of the amazing phenomena of (co)tilting modules, namely, that many aspects of the category $\op{mod}A$ are controlled by infinitely generated (co)tilting modules and this fact goes beyond
finite dimensional or Artin algebras. Indeed, roughly speaking, the mentioned result (see \cite{AHT}) states that, for every ring $A$, the tilting modules in $\Mod A$ parametrize all resolving subcategories of $\op{mod}A$ consisting of modules which admit a finite projective resolution with finitely generated terms. Other
examples of the interaction between infinitely generated (co)tilting modules and properties of $\op{mod}A$ are given in the classification of tilting and cotilting modules over hereditary algebras finite dimensional algebras (see \cite{BK} and \cite{Ang-San}). Still in the field of module theory over arbitrary rings, let us mention an important result, namely, that each $n$-cotilting module is pure-injective (see \cite{Baz}, for the case $n=1$, and \cite{Sto} for the general case). In this form cotilting modules connect with the Ziegler spectrum of a ring and the notion of definable subcategory, having a great impact, among other things,
on the study of cotorsion pairs in modules categories (see \cite{GT}).

Apart from its original habitat of module theory and representation theory, tilting theory has been used in the representation of algebraic groups and Lie algebras (see \cite{D}), in  Algebraic Geometry in order to study  the category of coherent sheaves on a weighted projective line and, more generally, to study
hereditary categories with finite-dimensional $\Hom$ and $\op{Ext}$ spaces (see \cite{GL}, \cite{GL2}, \cite{Meltz} and \cite{L}) and it has been fundamental in the categorification process of the study of cluster algebras, as defined by Fomin and Zelevinsky in \cite{FZ}, a concept that appears in fields as diverse as Poisson geometry, discrete dynamical systems, Algebraic Geometry and the study of polyhedra (see \cite{K-clI} and \cite{K-clII} for excellent surveys on the topic).

One of the fundamental consequences of Tilting Theory, which is the most interesting for us in this paper, is that it was the precursor of the so-called Morita theory for derived categories. Indeed, Happel \cite{Happel} proved that if $T$ is a classical tilting module (see Definition~\ref{tilting module}) over a finite dimensional algebra $A$ and $B=\End_A(T)$ is its endomorphism algebra, then the derived functors of $\Hom$ and the tensor product induce an equivalence $\cd^bA\arr{\sim}\cd^bB$ between the bounded derived categories of the corresponding  categories of finitely generated modules. The result inspired Rickard and Keller (see \cite{Rickard} and \cite{KellerDDC}) who gave necessary and sufficient conditions for two (not necessarily finite dimensional) algebras to have equivalent derived categories. As a consequence, we have:

\begin{thm}
Let $A$ be an ordinary algebra, $T$ be a right $A$-module and $B=\End_A(T)$ be its endomorphism algebra. The following assertions are equivalent:
\begin{itemize}
\item[1)] $T$ is a classical tilting $A$-module. 
\item[2)] The functor $\RHom_A(T,?):\cd A\ra \cd B$ is an equivalence of categories. 
\item[3)] The functor $T\otimes^\L_A?:\cd(A^\text{op})\ra\cd(B^\text{op})$ is an equivalence of categories.
\end{itemize}
\end{thm}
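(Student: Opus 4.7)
The plan is to recognize that conditions (2) and (3) both reduce, via Rickard's (and Keller's) Morita theorem for derived categories, to the assertion that $T$ is a tilting complex concentrated in degree zero, and then to translate this derived datum back into the classical tilting data (1).

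For $(1)\Rightarrow(2)$, I verify the three axioms of a tilting complex for $T\in\cd A$. First, $T\in\per A$, since by hypothesis $T$ admits a finite resolution by finitely generated projective $A$-modules. Second, $\Hom_{\cd A}(T,T[n])=\Ext^n_A(T,T)=0$ for $n\neq 0$ by the classical $\Ext$-vanishing hypothesis. Third, the $\operatorname{add}T$-coresolution $0\to A\to T^0\to\cdots\to T^n\to 0$ exhibits $A$ as an iterated extension of shifts of objects in $\operatorname{add}T$, hence $A\in\thick(T)$; since $A$ generates $\per A$ as a thick subcategory, so does $T$. Combined with $\End_{\cd A}(T)=\End_A(T)=B$, Rickard's theorem yields the equivalence $\RHom_A(T,?):\cd A\arr{\sim}\cd B$.

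For $(2)\Rightarrow(1)$, I argue that $T$ must be a tilting complex and unwind this into classical terms. The equivalence sends $T$ to a compact object isomorphic to $B$, so $T\in\per A$; because $T$ is concentrated in degree zero, this yields a finite resolution by finitely generated projectives, and hence finite projective dimension. The identity $\RHom_A(T,T)\cong\RHom_B(B,B)=B$ delivers the required $\Ext$-vanishing. The subtle step is producing the $\operatorname{add}T$-coresolution of $A$: the equivalence restricts to $\per A\simeq\per B$, so $A\in\per A=\thick(T)$; one then inductively builds right $\operatorname{add}T$-approximations of $A$, using the $\Ext$-vanishing to control the successive cosyzygies, and terminates the construction at length bounded by the projective dimension of $T$. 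Converting the abstract membership $A\in\thick(T)$ into an honest finite coresolution in $\Mod A$ is the main obstacle in the argument.

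For $(2)\Leftrightarrow(3)$, the $B$-$A$-bimodule $T$ simultaneously induces $\RHom_A(T,?)$ and $T\otimes_A^{\L}?$. By Keller's refinement of Rickard's theorem, a tilting complex on one side is automatically a two-sided tilting complex, and such a complex yields derived equivalences on both sides. Concretely, the quasi-inverse of $\RHom_A(T,?)$ must coincide with its left adjoint $?\otimes^{\L}_B T$, and tensor-hom adjunction transports this equivalence across the interchange between $(A,B)$ and $(A^{\mathrm{op}},B^{\mathrm{op}})$, so that $T\otimes^{\L}_A?:\cd(A^{\mathrm{op}})\arr{\sim}\cd(B^{\mathrm{op}})$ is also an equivalence; the reverse implication $(3)\Rightarrow(2)$ is symmetric.
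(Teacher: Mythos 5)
Your overall strategy --- reduce everything to Rickard--Keller Morita theory by checking that $T$ is a one-term tilting complex with endomorphism ring $B$ --- is exactly the route the paper intends (it states the theorem as a consequence of \cite{Rickard, Rick2, KellerDDC}), and directions $(1)\Rightarrow(2)$ and $(2)\Leftrightarrow(3)$ are fine as sketched. But the step you yourself flag as ``the main obstacle'' in $(2)\Rightarrow(1)$ is a genuine gap, not a routine verification, and the inductive-approximation scheme you propose for it does not obviously work. Starting from the bare membership $A\in\thick_{\cd A}(T)$ you would need: a left $\op{add}(T)$-approximation $A\ra T^0$ (over an arbitrary ordinary algebra such approximations are not automatic --- one first has to know that $T$ is finitely generated as a left $B$-module, which itself comes out of the equivalence); injectivity of that map; exactness of the dualized sequences so that the cosyzygies stay under control; and a termination argument. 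None of these is addressed. This is precisely the content of Proposition~\ref{A in thickT} of the paper, whose proof proceeds quite differently: by Proposition~\ref{description thickT}, $A$ is a direct summand of a bounded complex over $\op{sum}(T)$; one then passes to stupid truncations, realizes the complementary summand via the mapping cone of an idempotent cochain map, and applies the double dual $\Hom_{B^\text{op}}(\Hom_A(?,T),T)$ together with the reflexivity Lemma~\ref{T reflexibility} to produce an honest exact coresolution $0\ra A\ra Q^0\ra\dots\ra Q^n\ra 0$ in $\Mod A$. You should either reproduce an argument of that strength or cite it; as written, $(2)\Rightarrow(1)$ is incomplete at its crucial point.

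Two smaller omissions. First, condition c) of Definition~\ref{tilting module} demands $\Ext^p_A(T,T^{(\alpha)})=0$ for \emph{all} cardinals $\alpha$, whereas your argument only produces $\Ext^p_A(T,T)=0$; this is easily repaired because $T\in\per A$ is compact, so $\Ext^p_A(T,T^{(\alpha)})\cong\Ext^p_A(T,T)^{(\alpha)}$. Second, in $(1)\Rightarrow(2)$, Rickard's theorem as usually quoted gives the \emph{existence} of some equivalence $\cd A\simeq\cd B$; to conclude that the specific functor $\RHom_A(T,?)$ (equivalently its left adjoint $?\otimes^\L_BT$) is an equivalence one needs the d\'evissage argument of Keller's approach, which in this paper is Theorem~\ref{teor.derived tensor fully faithful} combined with Corollary~\ref{rem.smashing and BMT}. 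Both points are standard, but they deserve a sentence each.
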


It seems natural to weaken the hypotheses on the functors in assertions 2 and 3 of the theorem, by requiring only that they be fully faithful, and try to see what sort of modules we do get. Another natural question arises also, namely, the connection of these fully faithful hypotheses with the general concept of
(infinitely generated) tilting module. A recent result by Bazzoni-Mantese-Tonolo \cite{BazzoniManteseTonolo} points in this direction and shows that the so-called
good tilting modules (see Definition~\ref{tilting module}) provide an example of modules $T$ for which the functors
$\RHom_A(T,?):\cd A\ra\cd B$ and $T\otimes^\L_A?:\cd(A^\text{op})\ra\cd(B^\text{op})$ are fully faithful. The result was extended by Yang \cite{Yang} to the context of derived categories of dg categories over a field, by showing that, for a natural substitute of the notion of good tilting module in this context, the corresponding $\RHom$ functor was fully faithful and presented the original derived category as part of a recollement of derived categories of dg categories. When coming back to an ordinary algebra $A$ and a good tilting module $T$, with $B=\End_A(T)$ as its endomorphism algebra, Yang's result implies that $\cd B$ is a recollement of $\cd A$ and the derived category of a dg algebra. Recent papers by Chen and Xi \cite{Chen-Xi1, Chen-Xi2} give conditions under which this dg algebra can be replaced by an ordinary algebra.

The first main result of this paper, Theorem~\ref{teor.derived tensor fully faithful}, characterizes the situation where, for a $\cb$-$\ca$-bimodule $T$ over dg categories $\cb$ and $\ca$, the functor $?\otimes^\L_\cb T:\cd\cb\ra\cd\ca$ is fully faithful (and hence induces a semiorthogonal decomposition of $\cd\ca$, since it has a right adjoint).

The second main result of this paper, Theorem~\ref{special localization result}, characterizes, up to quasi-isomorphism of dg categories, the situation where we, furthermore, have another dg $\ca$-$\cb$-bimodule $T'$ such that $T'\otimes^\L_{\cb}T$ is quasi-isomorphic to $\ca$ (as a dg
$\ca$-$\ca$-bimodule, \ie when $T$ has a left tensor inverse $T'$). In that case we can express $\cd\cb$ as a recollement of $\cd\ca$ and the derived category of another dg category.

Our results, when applied to ordinary algebras, show the relationship of the appearing bimodules with the concept of good tilting module and, more generally, with tilting theory. Also for ordinary algebras, our results cover and extend some results in \cite{Chen-Xi1, Chen-Xi2}.

Now we show some consequences, in the world of ordinary algebras, of our main results.

Let $k$ be a commutative ring, $A$ an ordinary $k$-algebra and assume in this paragraph that $T$ a right $A$-module such that $\op{Ext}^p_A(T,T)=0$ for all $p>0$ and $B=\End_A(T)$. Then, after Corollary~\ref{generalized tilting and recollements}, there exists a dg algebra $C$ and a recollement
\[\xymatrix{
\cd A\ar@/_2.5pc/[d]\ar@/_-2.5pc/[d] \\
\cd B\ar[u]^{j^{*}}\ar@/_2.5pc/[d]\ar@/_-2.5pc/[d] \\
\cd C\ar[u]^{i_*}
}
\]
such that $j^{*}=?\otimes^\L_{B}T$, if and only if there exists an exact sequence in $\Mod A$
\[
0\ra A\ra T^0\ra T^{1}\ra\dots\ra T^{n}\ra 0,
\]
where each $T^i$ is a direct summand of a finite direct sum of copies of $T$. In case $B$ is $k$-flat, the dg algebra $C$ can be chosen so that there is a homological epimorphism of dg algebras $f:B\ra C$ such that $i_{*}=f_{*}$ is the restriction of scalars along $f$. Also, after Corollary~\ref{recollement and bimodule}, there is a recollement
\[\xymatrix{
\cd B\ar@/_2.5pc/[d]_{j_{!}}\ar@/_-2.5pc/[d] \\
\cd A\ar[u]\ar@/_2.5pc/[d]\ar@/_-2.5pc/[d] \\
\cy\ar[u]
}
\]
with $j_{!}=?\otimes^\L_{B}T$ if and only if $T$ admits a finite projective resolution with finitely generated terms as a right $A$-module.

Assuming $\Ext^p_{A}(T,T^{(\alpha)})=0$ for each cardinal $\alpha$ and each integer $p>0$, Corollary~\ref{some characterization of good tilting} characterizes in terms of the derived functors $\RHom_{A}(T,?)$ and $?\otimes^\L_{B}T$ when $T$ is a good tilting module. Assuming $A$ is right hereditary, Corollary~\ref{1tilting inducing ff} characterizes, in terms of the categories of modules over $A$ and $B$, when $\RHom_{A}(T,?):\cd A\ra\cd B$ is fully faithful.

Concerning results about absence of communion between `tiltingness' of $T$ and nice properties of $\RHom_A(T,?)$, Theorem~\ref{main result ordinary case} says that (with $B=\End_A(T)$):
\begin{itemize}
\item[1)] There are right $A$-modules $T$ satisfying the following conditions:
\begin{itemize}
\item[a)] $\op{pd}_{A}T\leq 1$,
\item[b)] the functor $\RHom_{A}(T,?):\cd A\ra\cd B$ is fully faithful, preserves compact objects and $B$ is in its essential image,
\item[c)] $\Ext_A(T,T^{(\N)})\neq 0$ and so $T$ is not a tilting module.
\end{itemize}
\item[2)] There are right $A$-modules $T$ satisfying the following conditions:
\begin{itemize}
\item[a)] $\op{pd}_{A}T\leq 1$,
\item[b)] the functor $\RHom_{A}(T,?):\cd A\ra\cd B$ is fully faithful,
\item[c)] $\Ext^{1}_{A}(T,T)\neq 0$.
\end{itemize}
\item[3)] There are right $A$-modules $T$ satisfying the following conditions:
\begin{itemize}
\item[a)] $T$ is a $1$-tilting module,
\item[b)] $\RHom_{A}(T,?):\cd A\ra\cd B$ is not fully faithful.
\end{itemize}
\end{itemize}

We use sections \ref{Notation and terminology}, \ref{Basics of categories}, \ref{The derived category of a dg category} and \ref{Adjunctions} both to fix notation and to introduce the non-expert readers into the subject. 
\bigskip

\textbf{Acknowledgments}
\bigskip

The authors thank the referee for his careful reading of the first version of
this article and his helpful remarks. 

\section{Notation and terminology}\label{Notation and terminology}

Concerning set-theoretical technicalities we follow the example of \cite[\S II.1.3]{GelfandManin}, \cite[\S I.1]{Gabriel}, \cite{GabrielPopescu}, \cite{MacLane}, \cite{Rickard},\dots based on Grothendieck's viewpoint \cite{SGA4}. We add the following axiom to ZFC: 
\bigskip

\textbf{Universe axiom:} Every set is an element of a universe.
\bigskip

A universe is a set closed under all possible operations on abstract sets (see a definition in \cite{SGA4} or \cite{Gabriel}). After adding the universe axiom to ZFC, we fix a universe $\uni$ `big enough' (\ie containing all the rings considered, etc.: it turns out that this universe will be a model of ZFC). We say that a set $S$ is {\em small} if $S\in\uni$. For us, if $\cc$ is a category, then its objects form a set, and each morphism space $\Hom_{\cc}(M,N)$ is a small set. We then say that a category $\cc$ is {\em small} if its objects form a small set. We speak of {\em small} (co)products when we form the (co)product of a small set of objects.
\bigskip

Let $\cc$ be an additive category and let $\cs$ be a set of objects of $\cc$. Then:

\begin{itemize}
\item $\op{add}_{\cc}(\cs)$ (or $\op{add}(\cs)$, if the $\cc$ is understood) denotes the full subcategory of $\cc$ formed by direct summands of finite coproducts of objects of $\cs$. When $\cs$ has only one element $M$ we write $\op{add}_{\cc}(M)$ instead of $\op{add}_{\cc}(\{M\})$.
\item $\op{Add}_{\cc}(\cs)$ (or $\op{Add}(\cs)$, if the $\cc$ is understood) denotes the full subcategory of $\cc$ formed by direct summands of small coproducts of objects of $\cs$. When $\cs$ has only one element $M$ we write $\op{Add}_{\cc}(M)$ instead of $\op{Add}_{\cc}(\{M\})$.
\item $\cs^\perp$ is the full subcategory of $\cc$ formed by those objects $M$ such that $\Hom_{\cc}(S,M)=0$ for each $S\in\cs$.
\item $^{\perp}\cs$ is the full subcategory of $\cc$ formed by those objects $M$ such that $\Hom_{\cc}(M,S)=0$ for each $S\in\cs$.
\end{itemize}

Given a triangulated category, a {\em full triangulated subcategory} is a full additive subcategory closed under shifts and extensions, a {\em thick subcategory} is a full triangulated subcategory closed under direct summands, and a {\em localizing subcategory} is a full triangulated subcategory closed under small coproducts. Let $\cd$ be a triangulated category and let $\cs$ be a set of objects of $\cc$. Then:

\begin{itemize}
\item $\op{tria}_{\cd}(\cs)$ (or $\op{tria}(\cs)$, if the $\cd$ is understood) is the smallest full triangulated subcategory of $\cd$ containing $\cs$. When $\cs$ has only one element $M$ we write $\op{tria}_{\cd}(M)$ instead of $\op{tria}_{\cd}(\{M\})$.
\item $\op{Tria}_{\cd}(\cs)$ (or $\op{Tria}(\cs)$, if the $\cd$ is understood) is the smallest localizing subcategory of $\cd$ containing $\cs$. When $\cs$ has only one element $M$ we write $\op{Tria}_{\cd}(M)$ instead of $\op{Tria}_{\cd}(\{M\})$.
\item $\thick_{\cd}(\cs)$ (or $\thick(\cs)$, if the $\cd$ is understood) is the smallest thick subcategory of $\cd$ containing $\cs$. When $\cs$ has only one element $M$ we write $\thick_{\cd}(M)$ instead of $\thick_{\cd}(\{M\})$.
\end{itemize}

We will always assume that there is a fixed ground ring $k$ (commutative, associative, with identity) and that our categories are $k$-linear. We will also assume that the tensor product $\ca\otimes\cb$ of two dg categories $\ca$ and $\cb$ is made over $k$. The existence of such a ring is important for example in Notation~\ref{one-functors}, Notation~\ref{two-functors}, Lemma~\ref{defining triangulated 2-functors}, Proposition~\ref{relation among dualities}, Notation~\ref{B-dual}, Lemma~\ref{B-duality and isos}, Theorem~\ref{special localization result}, \dots where we assume certain dg categories satisfy certain properties relative to $k$.

\begin{definition}\label{tilting module}
Let $A$ be an ordinary algebra, and let $T$ be a right $A$-module. Consider the following conditions:
\begin{itemize}
\item[a)] There is an exact sequence $0\ra P_{n}\ra P_{n-1}\ra\dots\ra P_{1}\ra P_{0}\ra T\ra 0$ in $\Mod A$ where the modules $P_{i}$ are projective.
\item[a')] There is an exact sequence $0\ra P_{n}\ra P_{n-1}\ra\dots\ra P_{1}\ra P_{0}\ra T\ra 0$ in $\Mod A$ where the modules $P_{i}$ are finitely generated projective.
\item[b)] There is an exact sequence $0\ra A\ra T^{0}\ra T^{1}\ra\dots\ra T^{m}\ra 0$ in $\Mod A$ where the modules $T^{i}$ are in $\op{Add}(T)$.
\item[b')] There is an exact sequence $0\ra A\ra T^{0}\ra T^{1}\ra\dots\ra T^{m}\ra 0$ in $\Mod A$ where the modules $T^{i}$ are in $\op{add}(T)$.
\item[c)] $\Ext^{p}_{A}(T,T^{(\alpha)})=0$ for each $p>0$ and each cardinal $\alpha$.
\end{itemize}
We say that $T$ is an {\em $n$-tilting} module if it satisfies a), b) and c). We say that it is a {\em classical $n$-tilting} module if it satisfies a'), b) and c). We say that $T$ is a {\em good $n$-tilting} module if it satisfies a), b') and c). We say that a module is {\em tilting} (resp. {\em classical tilting}, {\em good tilting}) if it is $n$-tilting (resp. classical $n$-tilting, good $n$-tilting) for some $n\geq 1$.
\end{definition}

\begin{remark}\label{terminology infinitely generated tilting}
To emphasize the fact that a tilting module is not classical, sometimes the literature speaks of an {\em infinitely generated} tilting module.
\end{remark}

\section{Basics of categories}\label{Basics of categories}

\subsection{Localizations}

Recall Proposition I.1.3. of \cite{GabrielZisman}:

\begin{proposition}\label{GabrielZisman result}
Let 
\[\xymatrix{ \cd\ar@<1ex>[d]^{R} \\
\cc\ar@<1ex>[u]^{L}
}
\]
be an adjoint pair of functors between arbitrary categories, and let $\cs$ be the set of morphisms $u$ of $\cc$ such that $L(u)$ is an isomorphism. The following conditions are equivalent:
\begin{itemize}
\item[1)] $R$ is fully faithful.
\item[2)] The counit of the adjunction is an isomorphism of functors.
\item[3)] The functor $\cc\left[\cs^{-1}\right]\ra \cd$ induced by $L$ is an equivalence, where $\cc\left[\cs^{-1}\right]$ is the {\em category of fractions of $\cc$ for $\cs$} (see \cite[Chapter One, \S 1]{GabrielZisman}).
\end{itemize}
\end{proposition}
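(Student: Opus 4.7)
The plan is to show the cyclic implications by first establishing the familiar equivalence (1)$\Leftrightarrow$(2) and then reducing (3) to the adjunction between $\bar{L}$ and an induced functor $QR$ at the level of the category of fractions.

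For (1)$\Leftrightarrow$(2), I would use Yoneda. Given the adjunction bijection $\Hom_\cd(LX,Y')\cong\Hom_\cc(X,RY')$, for $Y,Y'\in\cd$ the composite
\[
\Hom_\cd(Y,Y')\longarr{R}\Hom_\cc(RY,RY')\iso\Hom_\cd(LRY,Y')
\]
equals precomposition with the counit $\varepsilon_Y$. Hence $R$ is fully faithful iff $\Hom_\cd(\varepsilon_Y,Y')$ is bijective for all $Y,Y'$, iff (by Yoneda) $\varepsilon_Y$ is an isomorphism for all $Y$.

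For (2)$\Leftrightarrow$(3) the key step is to produce an adjunction $\bar{L}\dashv QR$ on $\cc[\cs^{-1}]$, where $Q:\cc\ra\cc[\cs^{-1}]$ is the canonical functor and $\bar{L}$ is the factorization of $L$ through $Q$ provided by the definition of $\cs$ and the universal property. I would take the counit to be $\varepsilon$ itself (noting that $\bar{L}\circ(QR)=LR$) and the unit at $QX$ to be $Q\eta_X$. Well-definedness and naturality of $Q\eta$ on $\cc[\cs^{-1}]$ follow from naturality of $\eta$ on $\cc$ combined with the fact that every morphism of $\cc[\cs^{-1}]$ is a composition of $Q(f)$'s and inverses of $Q(s)$'s, $s\in\cs$. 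The triangle identities for $(\bar{L},QR,Q\eta,\varepsilon)$ are immediate from the triangle identities for $(L,R,\eta,\varepsilon)$ after applying $Q$.

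Having this, the equivalence (2)$\Leftrightarrow$(3) becomes formal. If (2) holds, then $\varepsilon$ is iso, so the counit of $\bar{L}\dashv QR$ is iso; moreover, from $\varepsilon_{LX}\circ L\eta_X=\id_{LX}$ with $\varepsilon_{LX}$ invertible we get that $L\eta_X$ is an isomorphism, so $\eta_X\in\cs$, so the unit $Q\eta_X$ is iso in $\cc[\cs^{-1}]$; hence $\bar{L}$ is an equivalence, giving (3). Conversely, if $\bar{L}$ is an equivalence, then any quasi-inverse is in particular a right adjoint, hence isomorphic to $QR$ by uniqueness of adjoints; therefore the counit $\varepsilon$ of $\bar{L}\dashv QR$ is an isomorphism, giving (2).

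The main obstacle I anticipate is Step 2: verifying that $(\bar{L},QR,Q\eta,\varepsilon)$ really forms an adjoint pair. Concretely, the delicate point is naturality of $Q\eta$ with respect to inverted morphisms $Q(s)^{-1}$ in $\cc[\cs^{-1}]$ and, more fundamentally, the fact that one cannot compute $\Hom_{\cc[\cs^{-1}]}(QX,QY)$ directly (no calculus of fractions is assumed); this is why the triangle-identity route via unit and counit is preferable to a Hom-set verification of the adjunction.
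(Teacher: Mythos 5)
Your argument is correct. Note that the paper itself gives no proof of this proposition --- it is simply recalled from Gabriel--Zisman (Proposition I.1.3 of \emph{Calculus of Fractions and Homotopy Theory}) --- and your proof is essentially the standard one from that source: the Yoneda argument for $1)\Leftrightarrow 2)$, and the adjunction $\bar{L}\dashv QR$ on $\cc[\cs^{-1}]$, verified via unit and counit rather than Hom-sets (correctly so, since no calculus of fractions is available), for $2)\Leftrightarrow 3)$.
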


\subsection{Generalities about triangulated categories}

The following notion comes from Algebraic Topology:

\begin{definition}
Let $\cd$ be a triangulated category. A \emph{localization} of $\cd$ is a triple $(L,\alpha,\eta)$ where $(L,\alpha)$ is a triangle endofunctor of $\cd$ and $\eta:\id\ra L$ is a natural transformation such that:
\begin{itemize}
\item[a)] $L\eta:L\arr{\sim}L^2$ is an isomorphism,
\item[b)] $L\eta=\eta L$,
\item[c)] $\eta$ commutes with the shift functor, \ie for each $M\in\cd$ the following diagram is commutative:
\[\xymatrix{\Sigma M\ar[r]^{\Sigma\eta_{M}}\ar[dr]_{\eta_{\Sigma M}} & \Sigma(LM) \\
& L(\Sigma M)\ar[u]_{\alpha_{M}}^{\wr}
}
\] 
 \end{itemize}
The localization is said to be \emph{smashing} if $L$ preserves small coproducts. A full subcategory $\cx$ of $\cd$ is said to be {\em smashing} if it is the kernel of a smashing localization functor of $\cd$. A full subcategory $\cz$ of $\cd$ is said to be {\em co-smashing} if it becomes a smashing subcategory when regarded inside the opposite triangulated category $\cd^\text{op}$.
\end{definition}

\begin{remark}
Let $\cd$ be a triangulated category with small coproducts. One can prove (see \cite[Proposition 4.4.3]{NicolasTesis}) that a full triangulated subcategory $\cx$ of $\cd$ is smashing if and only if the inclusion functor $\cx\ra\cd$ admits a right adjoint which preserves small coproducts. Dually, if $\cd$ is a triangulated category with small products, then a full triangulated subcategory $\cz$ of $\cd$ is co-smashing if and only if the inclusion functor $\cz\ra\cd$ admits a left adjoint which preserves small products.
\end{remark}

The following notion comes from Algebraic Geometry:

\begin{definition}
A {\em ($2$-steps) semi-orthogonal decomposition} of a triangulated category $\cd$ is a pair $(\cx,\cy)$ of strictly full triangulated subcategories of $\cd$ such that:
\begin{enumerate}
\item $\Hom_{\cd}(\cx,\cy)=0$,
\item $\cd=\op{tria}_{\cd}(\cx\cup\cy)$.
\end{enumerate}
\end{definition}

The following lemma characterizes the second condition in the former definition:

\begin{lemma}
Let $\cd$ be a triangulated category and let $(\cx,\cy)$ be a pair of strictly full triangulated subcategories of $\cd$ such that $\Hom_{\cd}(\cx,\cy)=0$. The following properties are equivalent:
\begin{enumerate}
\item $\cd=\op{tria}_{\cd}(\cx\cup\cy)$.
\item For each $M\in\cd$ there exists a triangle
\[X\ra M\ra Y\ra\Sigma X
\]
of $\cd$ with $X\in\cx$ and $Y\in\cy$.
\item The inclusion $x:\cx\ra\cd$ has a right adjoint (denoted $\tau_{\cx}$).
\item The inclusion $y:\cy\ra\cd$ has a left adjoint (denoted $\tau^{\cy}$).
\end{enumerate}
\end{lemma}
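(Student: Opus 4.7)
My plan is to establish the cycle $(1)\Leftrightarrow(2)\Leftrightarrow(3)$, then deduce $(2)\Leftrightarrow(4)$ by passing to the opposite category. The implication $(2)\Rightarrow(1)$ is immediate, since any $M$ sits in a triangle whose outer terms lie in $\cx\cup\cy$, hence in $\op{tria}_\cd(\cx\cup\cy)$.

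For $(1)\Rightarrow(2)$, let $\cc\subseteq\cd$ be the strictly full subcategory of objects admitting a triangle $X\to M\to Y\to\Sigma X$ with $X\in\cx$, $Y\in\cy$. Clearly $\cx\cup\cy\subseteq\cc$ (take the trivial triangle on either side), and $\cc$ is stable under $\Sigma$ and $\Sigma^{-1}$ since $\cx$ and $\cy$ are. The substantive point is closure under extensions: given a triangle $M'\to M\to M''\to\Sigma M'$ with $M',M''\in\cc$, fitting into triangles $X'\to M'\to Y'\to\Sigma X'$ and $X''\to M''\to Y''\to\Sigma X''$, the composition $X''\to M''\to\Sigma M'\to\Sigma Y'$ lands in $\Hom_\cd(\cx,\Sigma\cy)=0$, so $X''\to\Sigma M'$ factors through $\Sigma X'$. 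Completing $X'\to X\to X''\to\Sigma X'$ in $\cd$ places $X$ in $\cx$ (triangulated subcategory), and a standard octahedral/$3\times 3$ argument now produces a morphism $X\to M$ whose cone lies in $\cy$, witnessing $M\in\cc$. This octahedral combination is the only real obstacle; everything else is formal. Thus $\cc$ is triangulated and contains $\cx\cup\cy$, so $\cc=\cd$ by (1).

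For $(2)\Rightarrow(3)$, apply $\Hom_\cd(X',-)$ to the triangle $X\to M\to Y\to\Sigma X$ provided by (2), for $X'\in\cx$. Since $\cy$ is closed under shifts, $\Hom_\cd(X',\Sigma^{-1}Y)=\Hom_\cd(X',Y)=0$, yielding a natural isomorphism $\Hom_\cx(X',X)\iso\Hom_\cd(xX',M)$. Standard arguments upgrade this to functoriality of $M\mapsto X=:\tau_\cx M$ and show $\tau_\cx$ is right adjoint to $x$. To identify $\cx^\perp$: the inclusion $\cy\subseteq\cx^\perp$ is the hypothesis; conversely, if $M\in\cx^\perp$, the above isomorphism gives $\Hom_\cx(X,X)\cong\Hom_\cd(X,M)=0$, hence $X=0$ and $M\cong Y\in\cy$.

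For $(3)\Rightarrow(2)$, let $M\in\cd$ and complete the counit $\tau_\cx M\to M$ into a triangle $\tau_\cx M\to M\to Y\to\Sigma\tau_\cx M$. Applying $\Hom_\cd(X,-)$ for $X\in\cx$ and using that the counit induces isomorphisms $\Hom_\cx(X,\tau_\cx M)\iso\Hom_\cd(X,M)$ (and the analogous statement after shifting), the long exact sequence forces $\Hom_\cd(X,Y)=0$, so $Y\in\cx^\perp=\cy$. Finally, $(2)\Leftrightarrow(4)$ is obtained by applying the already-proven equivalence $(2)\Leftrightarrow(3)$ to the pair $(\cy^{\text{op}},\cx^{\text{op}})$ in $\cd^{\text{op}}$, swapping the roles of left and right adjoints and of $\cx^\perp$ and ${}^\perp\cy$.
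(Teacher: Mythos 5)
Your argument is correct. The paper states this lemma without proof (it is a standard characterization of semi-orthogonal decompositions going back to Bondal--Kapranov and \cite[\S 1.4]{BeilinsonBernsteinDeligne}), so there is nothing to compare against; your chain $(1)\Leftrightarrow(2)\Leftrightarrow(3)$ plus duality for $(4)$ is the usual route. The only delicate point, non-functoriality of cones when proving that the class of objects admitting a decomposition triangle is closed under extensions, is exactly where you invoke the vanishing $\Hom_{\cd}(\cx,\Sigma\cy)=0$ to get the commuting square and then the $3\times 3$ lemma; that is the right fix, and the remaining steps (representability of $\Hom_{\cd}(x(?),M)$ giving the adjoint, and the identification $\cx^{\perp}=\cy$ using strict fullness of $\cy$) are all sound.
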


\begin{remark}
According to Beligiannis-Reiten \cite{BR}, Bondal and Kapranov introduced a concept in \cite{BondalKapranov} which was latter called {\em semi-orthogonal decomposition} by Reiten-Van den Bergh in \cite{RVdB}. 
\end{remark}

Inspired by a notion in Module Theory, we can define the following:

\begin{definition}
A {\em (triangulated) torsion torsionfree(=TTF) triple} in a triangulated category $\cd$ is a triple $(\cx,\cy,\cz)$ such that both $(\cx,\cy)$ and $(\cy,\cz)$ are semi-orthogonal decompositions.
\end{definition}

The notion of `recollement' was introduced by A.~A.~Beilinson, J.~Bernstein and P.~Deligne in their work \cite{BeilinsonBernsteinDeligne} in Algebraic Analysis. 

\begin{definition}
Let $\cd\ko \cd_{F}$ and $\cd_{U}$ be triangulated categories. Then $\cd$ is a \emph{recollement of $\cd_{F}$ and $\cd_{U}$}, diagrammatically expressed by
\[\xymatrix{
\cd_{U}\ar@/_2.5pc/[d]_{j_{!}}\ar@/_-2.5pc/[d]^{j_{*}} \\
\cd\ar[u]^{j^{!}=j^{*}}\ar@/_2.5pc/[d]_{i^{*}}\ar@/_-2.5pc/[d]^{i^{!}} \\
\cd_{F}\ar[u]^{i_{*}=i_{!}}
}
\]
if there exist six triangle functors which satisfies the following four conditions:
\begin{enumerate}[R1)]
\item $(i^*,i_{*}=i_{!},i^!)$ and $(j_{!},j^!=j^*,j_{*})$ are adjoint triples.
\item $i^!j_{*}=0$ (and thus $j^!i_{!}=j^*i_{*}=0$ and $i^*j_{!}=0$).
\item $i_{*}\ko j_{!}$ and $j_{*}$ are full embeddings (and thus $i^*i_{*}\cong i^!i_{!}\cong \id_{\cd_{F}}$).
\item For every object $M$ of $\cd$ there exist two triangles,
\[i_{!}i^!M\ra M\ra j_{*}j^*M\ra (i_{!}i^!M)[1]
\]
and
\[j_{!}j^!M\ra M\ra i_{*}i^*M\ra (j_{!}j^!M)[1],
\]
in which the morphisms $i_{!}i^!M\ra M$ etc. are the corresponding adjunction morphisms.
\end{enumerate}
In this case we say that the data 
\[(\cd_{F},\cd_{U}, i^*,i_{*}=i_{!},i^!, j_{!},j^!=j^*,j_{*})
\] 
is a \emph{d\'{e}collement} of $\cd$.
Two d\'{e}collements of $\cd$,
\[\xymatrix{
\cd_{U}\ar@/_2.5pc/[d]_{j_{!}}\ar@/_-2.5pc/[d]^{j_{*}} \\
\cd\ar[u]^{j^{!}=j^{*}}\ar@/_2.5pc/[d]_{i^{*}}\ar@/_-2.5pc/[d]^{i^{!}} \\
\cd_{F}\ar[u]^{i_{*}=i_{!}}
}
\]
and
\[\xymatrix{
\cd'_{U}\ar@/_2.5pc/[d]_{j'_{!}}\ar@/_-2.5pc/[d]^{j'_{*}} \\
\cd\ar[u]^{j'^{!}=j'^{*}}\ar@/_2.5pc/[d]_{i'^{*}}\ar@/_-2.5pc/[d]^{i'^{!}} \\
\cd'_{F}\ar[u]^{i'_{*}=i'_{!}}
}
\]
are \emph{equivalent}\index{d\'{e}collement!equivalent} if 
\[(\im(i_{*}),\im(j_{*}),\im(j_{!}))=(\im(i'_{*}),\im(j'_{*}),\im(j'_{!})),
\] 
where by $\im(i_{*})$ we mean the essential image of $i_{*}$, and analogously with the other functors.
\end{definition}

\begin{theorem}
Let $\cd$ be a triangulated category with small coproducts.
\begin{enumerate}
\item The assignment
\[L\mapsto (\ker(L),\op{im}(L))=(\ker(L),\ker(L)^\perp)
\]
underlies a bijection between equivalence classes of localization functors and semi-orthogonal decompositions. The inverse map sends $(\cx,\cy)$ to the composition
\[\xymatrix{
L:\cd\ar[r]^{\tau^{\cy}} & \cy\ar[r]^{y} & \cd.
}
\]
Moreover, if $\cd$ is {\em perfectly generated} (see \cite[Definition 1]{Krause}, for example $\cd$ can be compactly generated, see Definition~\ref{def:compactly generated}), then under this bijection smashing localizations correspond to semi-orthogonal decompositions $(\cx,\cy)$ which fit into a TTF triple $(\cx,\cy,\cz)$.
\item The assignment
\[(\cd_{F},\cd_{U}, i^*,i_{*}=i_{!},i^!, j_{!},j^!=j^*,j_{*})\mapsto (j_{!}(\cd_{U}),i_{*}(\cd_{F}),j_{*}(\cd_{U}))
\]
yields a bijection between equivalence classes of d\'ecollements of $\cd$ and TTF triples on $\cd$. The inverse map takes the TTF triple $(\cx,\cy,\cz)$ to the class of the d\'ecollement
\[\xymatrix{
\cx\ar@/_2.5pc/[d]_{x}\ar@/_-2.5pc/[d]^{z\tau^{\cz}x} \\
\cd\ar[u]^{\tau_{\cx}}\ar@/_2.5pc/[d]_{\tau^\cy}\ar@/_-2.5pc/[d]^{\tau_{\cy}} \\
\cy\ar[u]^{y}
}
\]
\end{enumerate}
\end{theorem}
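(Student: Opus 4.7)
The plan is to split the proof into part (1), where I establish the SOD--localization bijection and its smashing/TTF refinement, and part (2), where I handle the TTF--d\'ecollement bijection; in both parts, the preceding adjoint characterization of SODs is the engine doing most of the work. For the first half of part (1), given a localization $(L,\alpha,\eta)$, I would check that $(\ker L,\im L)$ is an SOD: the Hom-vanishing is a one-line naturality argument ($\eta_Y\circ f=L(f)\circ\eta_X=0$ when $X\in\ker L$ and $\eta_Y$ is invertible, i.e.\ $Y\in\im L$), and condition (2) of the preceding lemma follows from the triangle $X\to M\to LM\to\Sigma X$ obtained by completing $\eta_M$: applying $L$ and using axiom (a) that $L\eta_M$ is an isomorphism forces $LX=0$. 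Conversely, given an SOD $(\cx,\cy)$, I would set $L:=y\tau^{\cy}$ with $\eta$ the unit of $\tau^{\cy}\dashv y$: axioms (a) and (b) then follow from the classical ``idempotent monad of a reflective subcategory'' lemma (applicable because $y$ is fully faithful, so the counit $\tau^{\cy}y\simeq\id_{\cy}$ is invertible), and (c) is automatic since $\cy$ is closed under $\Sigma$. The two constructions are mutually inverse by uniqueness of the SOD triangle decomposition of each object.

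For the smashing refinement, if $L$ is smashing then both $\ker L$ and $\im L$ are closed under small coproducts. Assuming $\cd$ is perfectly generated, I would invoke Krause's smashing subcategory theorem (itself a consequence of Brown representability) to produce a right adjoint $\tau_{\cy}$ to the inclusion $y\colon\cy\to\cd$; then $\cz:=\cy^{\perp}$ exhibits, via the preceding characterization lemma, $(\cy,\cz)$ as an SOD, producing the TTF triple $(\cx,\cy,\cz)$. Conversely, the existence of a TTF extension implies $y$ has both adjoints and hence preserves coproducts, so $L=y\tau^{\cy}$ is smashing because $\tau^{\cy}$, being a left adjoint, always preserves coproducts.

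For part (2), the direction from d\'ecollement to TTF triple is immediate: the orthogonalities $\Hom(j_{!}U,i_{*}F)\cong\Hom(U,j^{!}i_{*}F)=0$ and $\Hom(i_{*}F,j_{*}U)\cong\Hom(j^{*}i_{*}F,U)=0$ come from R1+R2, while the two triangles in R4 are exactly the SOD triangles for $(j_{!}\cd_{U},i_{*}\cd_{F})$ and $(i_{*}\cd_{F},j_{*}\cd_{U})$. Conversely, given a TTF $(\cx,\cy,\cz)$, I would define the six functors as in the displayed diagram and verify R1--R4 using only the four adjunctions $(\tau_{\cx},x)$, $(\tau^{\cy},y)$, $(\tau_{\cy},y)$, $(\tau^{\cz},z)$ provided by the two SODs. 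The key structural observation is that $\tau^{\cz}x\colon\cx\xrightarrow{\sim}\cz$ is an equivalence of triangulated categories (both $\cx$ and $\cz$ being canonically equivalent to the Verdier quotient $\cd/\cy$), so $j_{*}=z\tau^{\cz}x$ is this equivalence composed with the inclusion $z$; the adjointness $j^{*}=\tau_{\cx}\dashv j_{*}$ then reduces, via this equivalence, to $\tau^{\cz}\dashv z$ from the second SOD. Well-definedness on equivalence classes is automatic since both assignments see only essential images.

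The main obstacle I expect is the smashing half of part (1): one must locate exactly where the perfect-generation hypothesis enters, namely to invoke Brown representability (via Krause's theorem) to produce the right orthogonal $\cz$; I would cite this as a black box rather than reprove it. A secondary, more clerical obstacle is verifying R1--R4 in part (2), which requires managing two overlapping SOD structures simultaneously and recognizing the six-functor formalism as a compact repackaging of the four adjunctions of the two SODs, with $\cx$ and $\cz$ serving as the two equivalent copies of $\cd/\cy$ sitting inside $\cd$.
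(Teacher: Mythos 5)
Your argument is correct and is, in substance, the standard proof that the paper itself delegates entirely to citations (Propositions 4.4.3, 4.4.14, 4.2.4 and 4.2.5 of \cite{NicolasTesis}, resting on Brown representability from \cite{Krause}); you have simply written out what those references contain, and each step you sketch (the naturality computation for $\Hom$-vanishing, the triangle on $\eta_M$, the idempotent-monad lemma for the converse, the identification of $\tau^{\cz}x:\cx\ra\cz$ with the equivalence $\cx\simeq\cd/\cy\simeq\cz$, and the reduction of the adjunction $\tau_{\cx}\dashv z\tau^{\cz}x$ to $\tau^{\cz}\dashv z$ via $\tau^{\cz}y=0$) checks out. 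The one step you leave as a black box --- producing the right adjoint to the inclusion $y:\cy\ra\cd$ in the smashing case --- is precisely where perfect generation enters (one verifies that $\cy=\op{im}(L)$ is itself perfectly generated by the objects $\tau^{\cy}(S)$, $S$ ranging over the perfect generators of $\cd$, so that Brown representability applies to $\cy$), and deferring it matches the level of detail of the paper's own proof.
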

\begin{proof}
(1) The fact that $(\ker(L),\op{im}(L))$ is a semi-orthogonal decomposition can be found in implication $(5)\Rightarrow (4)$ of the proof of \cite[Proposition 4.4.3]{NicolasTesis}.

The proof of the bijection between smashing localizations and TTF triples can be found in \cite[Proposition 4.4.14]{NicolasTesis}.

(2) See Proposition 4.2.4 and Proposition 4.2.5 of \cite{NicolasTesis}.
\end{proof}

\begin{lemma}\label{two-sided fully faithful adjoint}
Let $F:\cd\ra\cd'$ be a triangle functor between triangulated categories and suppose that $F$ has a left adjoint  $L$ and a right adjoint $R$. Then $L$ is
fully faithful if, and only if, so is $R$. In such case,  the triple $(\op{im}(L),\op{ker}(F),\op{im}(R))$ is a triangulated TTF triple in $\cd$.
\end{lemma}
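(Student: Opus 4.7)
The plan is to exploit the composite adjunction $FL\dashv FR$ of endofunctors of $\cd'$ arising from stacking $L\dashv F$ and $F\dashv R$. Unwinding the adjunction isomorphism $\Hom_{\cd'}(FL(X),Y)\cong \Hom_{\cd}(L(X),R(Y))\cong \Hom_{\cd'}(X,FR(Y))$, I would first record its unit and counit as
\[
\tilde{\eta}_{X}=F(\eta^{R}_{L(X)})\circ\eta^{L}_{X}\ko \tilde{\epsilon}_{Y}=\epsilon^{R}_{Y}\circ F(\epsilon^{L}_{R(Y)}),
\]
where $(\eta^{L},\epsilon^{L})$ and $(\eta^{R},\epsilon^{R})$ are the unit-counit pairs of the two given adjunctions.

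Assume $L$ is fully faithful. By Proposition~\ref{GabrielZisman result} (in its dual formulation characterising left-adjoint fully faithfulness by invertibility of the unit), $\eta^{L}$ is invertible, and the triangle identity $F\epsilon^{L}\circ \eta^{L}F=\id_{F}$ then forces $F\epsilon^{L}$ to be invertible too. So $\eta^{L}$ exhibits $FL\cong\id_{\cd'}$ as functors; in particular $FL$ is a fully faithful functor, so the unit $\tilde{\eta}$ of the composite adjunction is invertible by the same proposition. The triangle identity $\tilde{\epsilon}_{FL(X)}\circ FL(\tilde{\eta}_{X})=\id_{FL(X)}$ then gives $\tilde{\epsilon}_{FL(X)}$ invertible for every $X$, and since $FL\cong\id_{\cd'}$ is also essentially surjective, $\tilde{\epsilon}$ is invertible on every object of $\cd'$. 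Combining the invertibility of $\tilde{\epsilon}$ with the counit formula and with $F\epsilon^{L}R$ invertible yields that $\epsilon^{R}$ is invertible, which by Proposition~\ref{GabrielZisman result} applied to $(F,R)$ means that $R$ is fully faithful. The converse, $R$ fully faithful $\Rightarrow L$ fully faithful, is strictly symmetric, via the unit formula and the dual triangle identities.

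Once both $L$ and $R$ are fully faithful, I would assemble the TTF triple directly. The orthogonality $\op{im}(L)^{\perp}=\ker(F)={}^{\perp}\op{im}(R)$ follows from the adjunction isomorphisms, since $\Hom_{\cd}(L(X),M)\cong\Hom_{\cd'}(X,F(M))$ vanishes for all $X$ precisely when $F(M)=0$ (take $X=F(M)$), and likewise for $R$. For the triangle decompositions required to upgrade orthogonality into the two semi-orthogonal decompositions, I complete $\epsilon^{L}_{M}:LF(M)\ra M$ to a triangle in $\cd$: applying $F$ and using the already-established invertibility of $F\epsilon^{L}$ shows that the third vertex lies in $\ker(F)$, producing the decomposition $(\op{im}(L),\ker(F))$. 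Dually, the triangle on $\eta^{R}_{M}:M\ra RF(M)$ has third vertex in $\ker(F)$, producing $(\ker(F),\op{im}(R))$, and together these yield the TTF triple $(\op{im}(L),\ker(F),\op{im}(R))$.

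The most delicate point is to keep track of which natural transformations are invertible, so that the invertibility of the \emph{specific} counit $\tilde{\epsilon}$ (and not merely the existence of an anonymous isomorphism $FR\cong\id_{\cd'}$) can be transferred to $\epsilon^{R}$ through the explicit formula $\tilde{\epsilon}=\epsilon^{R}\circ F\epsilon^{L}R$. Abstract uniqueness of adjoints is insufficient here; the triangle identities must be invoked in order to identify the required natural transformations.
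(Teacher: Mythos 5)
Your proof is correct, and for the equivalence of the two fully-faithfulness conditions it takes a genuinely different route from the paper's. The paper proves the implication from $R$ to $L$: invertibility of the counit of $(F,R)$ puts every object of $\cd'$ in the essential image of $F$, a triangle identity reduces everything to showing that $F$ applied to the counit of $(L,F)$ is invertible, and this last point is settled by completing that counit to a triangle and showing that its cone $X$ satisfies $\Hom_{\cd}(X,R(?))=0$, hence $\Hom_{\cd'}(F(X),?)=0$ and $F(\Sigma^{p}X)=0$ for all $p$ --- a step that uses the triangulated structure in an essential way. You instead stack the adjunctions into $FL\dashv FR$, observe that fully faithfulness of $L$ makes $FL$ an equivalence, transfer invertibility of the composite unit to the composite counit through the triangle identities and essential surjectivity of $FL$, and then cancel the invertible factor $F\epsilon^{L}R$ in $\tilde{\epsilon}=\epsilon^{R}\circ F\epsilon^{L}R$ to conclude that $\epsilon^{R}$ is invertible. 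This is purely formal and proves the equivalence for adjoint triples between arbitrary categories; the triangulated structure enters only in your construction of the TTF triple, where your argument (cones over $\epsilon^{L}_{M}$ and over $\eta^{R}_{M}$, whose images under $F$ vanish because $F\epsilon^{L}$ and $F\eta^{R}$ are invertible) is exactly the construction the paper leaves as a sketch. Your closing caveat is well placed: one must track the specific counit rather than invoke uniqueness of adjoints, and your use of the explicit unit--counit formulas for the composite adjunction does this correctly.
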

\begin{proof}
By the duality principle, it is enough to prove one of the implications.  We assume that $R$ is fully faithful. We denote by
$\mu: \id_{\cd'}\ra FL$ (resp. $\lambda: \id_{\cd}\ra RF$) and $\varepsilon: LF\ra \id_{\cd}$ (resp. $\delta: FR\ra \id_{\cd'}$) the unit and the counit of the adjunction $(L,F)$ (resp. $(F,R)$). Due to the fully faithful condition of $R$, we know that $\delta$ is an isomorphism.

We need to prove that $\mu(D')$ is an isomorphism, for each $D'\in\cd'$. This is equivalent to prove that $\mu(F(D))$ is an isomorphism, for each $D\in\cd$, because $\delta$ is an isomorphism and, therefore, each object of $\cd'$ is in the essential image of $F$. Using the adjunction identity $F(\varepsilon(D))\circ\mu(F(D))=\id_{F(D)}$, our task is reduced to check that $F(\varepsilon(D))$ is an isomorphism, for each $D\in\cd$.

Given $D$ as above, we have a commutative diagram of morphisms of functors
\[
\xymatrix{
\Hom_\cd(D,R(?))\ar[r]^{\varepsilon(D)^{\che}} & \Hom_\cd(LF(D),R(?))\ar[r]^{\sim} & \Hom_{\cd'}(F(D),FR(?))\ar[d]^{\delta^\we} \\
\Hom_{\cd'}(F(D),?)\ar[u]^{\wr}\ar[rr]^{\id} & & \Hom_{\cd'}(F(D),?).
}
\]
Note that $\delta^\we$ is an isomorphism since so is $\delta$. It follows that $\varepsilon(D)^{\che}$ is an isomorphism, for each $D\in\cd$. We then complete to a triangle
\[
LF(D)\arr{\varepsilon(D)} D\ra X\ra \Sigma LF(D)
\]
and get that $\Hom_\cd(X,R(?)):\cd'\ra \Mod\Z$ is the zero functor. By adjunction, it follows that
$\Hom_{\cd'}(F(X),?)=0$, which is equivalent to say that $F(\Sigma^{p}X)=0$ for each $p\in\Z$. This implies that $F(\varepsilon(D))$ is an isomorphism, as desired.

In order to prove that $(\op{im}(L),\op{ker}(F),\op{im}(R))$ is a triangulated TTF triple one just has to prove the existence of suitable triangles, since the vanishing conditions are pretty clear. To construct the triangles one has to use the unit and the counit of the adjunctions, the relations between them, and the fact that the unit (resp. the counit) is an isomorphism if (and only if) the left (resp. right adjoint) is fully faithful (see Proposition~\ref{GabrielZisman result} and its dual).
\end{proof}

\begin{definition}
An object $M$ of a triangulated category $\cd$ is {\em compact} if the functor $\Hom_{\cd}(M,?):\cd\ra\Mod\Z$ commutes with small coproducts, \ie if $\{X_{i}\}_{i\in I}$ is a small family such that $\coprod_{i\in I}X_{i}$ exists in $\cd$, then the canonical morphism
\[\can:\coprod_{i\in I}\Hom_{\cd}(M,X_{i})\ra\Hom_{\cd}(M,\coprod_{i\in I}X_{i})
\]
is an isomorphism.
\end{definition}

\begin{definition}\label{def:symmetric generators}
Let $\cd$ be a triangulated category. A nonempty set $\cs$ of objects of $\cd$ is a {\em generating set} for $\cd$ if the following holds: 
\begin{itemize}
\item[1)] if an object $D\in\cd$ satisfies $\Hom_{\cd}(S,D)=0$ for each $S\in\cs$, then $D=0$.
\end{itemize}
A generating set $\cs$ is {\em symmetric} if there exists a nonempty set $\ct$ of objects of $\cd$ with the following property:
\begin{itemize}
\item[2)] for any morphism $X\ra Y$ the induced map $\Hom_{\cd}(S,X)\ra\Hom_{\cd}(S,Y)$ is surjective for every $S\in\cs$ if and only if $\Hom_{\cd}(Y,T)\ra\Hom_{\cd}(X,T)$ is injective for every $T\in\ct$.
\end{itemize}
\end{definition}

\begin{definition}\label{def:compactly generated}
A triangulated category $\cd$ is said to be {\em compactly generated} if it has small coproducts and a generating set formed by compact objects.
\end{definition}

\begin{remark}\label{compactly implies symmetrically}
A compactly generated triangulated category has a set of symmetric generators (see \cite{Krause}).
\end{remark}

\begin{corollary}\label{smashing in bijection with cosmashing}
Let $\cd$ be a triangulated category with small coproducts and with a symmetric set of generators. The assignments $\cx\mapsto\cx^{\perp\perp}$ and $\cz\mapsto\ ^{\perp\perp}\cz$ define mutually inverse bijections between:
\begin{itemize} 
\item[1)] The set of (compactly generated) smashing subcategories of $\cd$.
\item[2)] The set of (compactly generated) co-smashing subcategories of $\cd$.
\end{itemize}
\end{corollary}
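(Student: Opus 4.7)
The plan is to realise both sides of the desired bijection as outer pieces of TTF triples in $\cd$, using the theorem above. I would begin with a smashing subcategory $\cx$ of $\cd$. Since $\cd$ has a symmetric small set of generators it is in particular perfectly generated, so part~(1) of the theorem produces a unique TTF triple $(\cx, \cy, \cz)$ with $\cy = \cx^{\perp}$, and consequently $\cz = \cy^{\perp} = \cx^{\perp\perp}$.

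The crucial step is to check that this $\cz$ is co-smashing, that is, smashing in the opposite category $\cd^{\text{op}}$. This is where the partner set $\ct$ of Definition~\ref{def:symmetric generators} intervenes: its existence is precisely designed so that Brown representability holds for $\cd^{\text{op}}$ as well (see \cite{Krause}), hence $\cd^{\text{op}}$ has small coproducts and a perfect generating set. Reversing all arrows, the TTF triple $(\cx, \cy, \cz)$ in $\cd$ becomes a TTF triple $(\cz, \cy, \cx)$ in $\cd^{\text{op}}$, and applying part~(1) of the theorem inside $\cd^{\text{op}}$ identifies $\cz$ as a smashing subcategory of $\cd^{\text{op}}$, i.e.\ as a co-smashing subcategory of $\cd$.

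The reverse bijection is obtained by running the same argument dually. Given a co-smashing $\cz$, I view it as smashing in $\cd^{\text{op}}$ and extract its TTF triple $(\cz, \cy, \cx)$ in $\cd^{\text{op}}$, which is the same as a TTF triple $(\cx, \cy, \cz)$ in $\cd$ with $\cy = {}^{\perp}\cz$ and $\cx = {}^{\perp}\cy = {}^{\perp\perp}\cz$. That the two assignments are mutually inverse is then formal: both are governed by the same TTF triple $(\cx, \cx^{\perp}, \cx^{\perp\perp})$, and the orthogonality built into any TTF triple gives $\cx = {}^{\perp\perp}(\cx^{\perp\perp})$ and dually $\cz = ({}^{\perp\perp}\cz)^{\perp\perp}$.

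The main obstacle I expect is the passage between $\cd$ and $\cd^{\text{op}}$: one has to verify carefully that the symmetric-generator hypothesis transports the TTF/smashing correspondence to the opposite category, which amounts to showing that the partner set $\ct$ really does supply a perfect generating set of $\cd^{\text{op}}$ in the sense required by the theorem. Once this verification, essentially Krause's Brown representability theorem for cohomological functors, is granted, the two bijections fall out at once, and the parenthetical refinement ``compactly generated on both sides'' follows by tracing through compactness in the same TTF-triple argument.
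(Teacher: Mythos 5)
Your argument is correct and follows essentially the same route as the paper: both pass through the correspondence between smashing subcategories and TTF triples (the same triples on $\cd$ and on $\cd^{\text{op}}$), using the partner set $\ct$ to transfer perfect generation, small (co)products and hence the correspondence to $\cd^{\text{op}}$. The only detail worth making explicit for the parenthetical refinement is that a TTF triple $(\cx,\cy,\cz)$ induces a triangle equivalence $\cx\simeq\cz$, which is what guarantees that compact generation passes between the two outer classes.
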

\begin{proof}
Let $\cs$ and $\ct$ be as in Definition~\ref{def:symmetric generators}. It is easy to check that $\cs$ is a set of {\em perfect generators} for $\cd$ (see Definition 1 of \cite{Krause}). Therefore, after \cite[Theorem A]{Krause}, we know that any contravariant functor $\cd\ra\Mod\Z$ sending triangles to exact sequences and sending small coproducts to small products is representable. One can use this to prove that $\cd$ has small products, and so $\cd^{\text{op}}$ has small coproducts. Note that $\ct$ is a set of symmetric generators of $\cd^\text{op}$. Therefore, we conclude that $\cd^\text{op}$ has also a set of perfect generators. Now we can use \cite[Corollary 2.4]{NicolasSaorinParametrizing} to deduce the existence of a bijection between the sets formed by:
\begin{itemize}
\item[-] the smashing subcategories of $\cd$,
\item[-] the triangulated TTF triples on $\cd$ (which are precisely the triangulated TTF triples on $\cd^\text{op}$),
\item[-] the smashing subcategories of $\cd^\text{op}$.
\end{itemize}
Moreover, when $(\cx,\cy,\cz)$ is a triangulated TTF triple, there is an equivalence of triangulated categories $\cx\simeq\cz$. Therefore the bijection restricts to
a bijection between the smashing subcategories which are compactly generated when considered as triangulated categories and the co-smashing subcategories which are also compactly generated when considered as triangulated categories.
\end{proof}

\begin{lemma}\label{lemma:description of tria(S) y thick(S)}
Let $\cd$ be a triangulated category and let $\cs$ be any set of objects of $\cd$. For an object $M$ of $\cd$, the following assertions are equivalent:
\begin{itemize}
\item[1)] $M$ belongs to $\op{tria}_\cd(\cs)$. 
\item[2)] There are triangles in $\cd$
\[
M_{i-1}\ra M_i\ra S_i\ra \Sigma M_{i-1}\ko i=1\ko\dots\ko r
\]
such that
\begin{enumerate}
\item[a)] $M_0=S_0$ and $S_1\ko\dots\ko S_r$ are such that $\Sigma^{n_{i}}S_i\in\cs$, for some  $n_i\in\Z$. 
\item[b)] $M_r=M$.
\end{enumerate}
\end{itemize}
Moreover, $\thick_{\cd}(\cs)$ consists of the direct summands of objects in $\tria_\cd(\cs)$.
\end{lemma}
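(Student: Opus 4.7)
The plan is to prove $1) \iff 2)$ by showing that the class $\cl$ of objects $M \in \cd$ admitting a presentation as in 2) is a full triangulated subcategory of $\cd$ containing $\cs$. Since $\tria_\cd(\cs)$ is by definition the smallest such subcategory, this yields $\tria_\cd(\cs) \subseteq \cl$, and hence $1) \Rightarrow 2)$. The reverse implication $2) \Rightarrow 1)$ is straightforward by induction on $r$: $M_0$ is a shift of an element of $\cs$, hence in $\tria_\cd(\cs)$, and each triangle $M_{i-1}\ra M_i\ra S_i\ra \Sigma M_{i-1}$ exhibits $M_i$ as an extension of two objects of $\tria_\cd(\cs)$.

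That $\cl$ contains $\cs$ (take $r=0$) and is closed under shifts is routine: applying $\Sigma^{\pm 1}$ to every triangle in a presentation of $M$ produces a presentation of $\Sigma^{\pm 1}M$. The substantive point is closure under extensions. Given a triangle $M'\ra M\ra M''\ra \Sigma M'$ with $M', M'' \in \cl$, I argue by induction on the length $r''$ of a presentation of $M''$. When $r''=0$, the object $M''$ is itself a shift $S$ of an element of $\cs$, so one simply appends the triangle $M'\ra M\ra S\ra \Sigma M'$ to a presentation of $M'$ to obtain one for $M$. For the inductive step, apply the octahedral axiom to the composition $M\ra M''\ra S''_{r''}$, where $S''_{r''}$ is the top cone in the presentation of $M''$. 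This produces an object $W$ together with triangles
\[ M'\ra W\ra M''_{r''-1}\ra \Sigma M' \quad \text{and} \quad W\ra M\ra S''_{r''}\ra \Sigma W. \]
The first triangle, together with the induction hypothesis (the presentation of $M''_{r''-1}$ has length $r''-1$), shows $W \in \cl$, and the second extends this presentation by one step to furnish one for $M$.

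For the moreover clause, let $\cm$ denote the class of direct summands of objects of $\tria_\cd(\cs)$. Clearly $\cm$ contains $\cs$ and is closed under shifts and under direct summands. For extension-closure, suppose $X\ra Y\ra Z\ra \Sigma X$ is a triangle with $X\oplus X'$ and $Z\oplus Z'$ in $\tria_\cd(\cs)$. Summing it with the split triangles $X'\arr{\id} X'\ra 0\ra \Sigma X'$ and $0\ra Z'\arr{\id} Z'\ra 0$ yields a triangle whose outer terms lie in $\tria_\cd(\cs)$, so the middle term $Y\oplus X'\oplus Z'$ lies there too, making $Y$ a direct summand of an object of $\tria_\cd(\cs)$; thus $Y\in\cm$. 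Hence $\cm$ is a thick subcategory containing $\cs$, so $\thick_\cd(\cs)\subseteq \cm$. The reverse inclusion is immediate since $\thick_\cd(\cs)$ is thick and contains $\tria_\cd(\cs)$.

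The main obstacle is the octahedral step in the extension-closure argument: one must organize it so that the resulting tower genuinely has cones of the form demanded by 2), namely shifts of elements of $\cs$ rather than arbitrary objects of $\tria_\cd(\cs)$. Once the intermediate object $W$ is identified, the induction on $r''$ takes care of the bookkeeping.
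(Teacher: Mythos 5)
Your proof is correct. Note that the paper does not actually argue this lemma: it simply cites \cite[Example 1.3.11]{BeilinsonBernsteinDeligne} for the equivalence $1)\Leftrightarrow 2)$ and \cite[Lemma 2.2.2]{BondalVanDenBergh} for the description of $\thick_\cd(\cs)$, so what you have done is supply the details that the paper outsources. Your two arguments are the standard ones underlying those references: the class $\cl$ of objects admitting a tower presentation contains $\cs$, is closed under shifts, and is closed under extensions by the octahedral induction on the length of the presentation of the third term (the triangle $M'\ra W\ra M''_{r''-1}\ra\Sigma M'$ produced by the octahedron on $M\ra M''\ra S''_{r''}$ is exactly the right input for the inductive hypothesis, and the companion triangle $W\ra M\ra S''_{r''}\ra\Sigma W$ appends the final layer); and the class $\cm$ of direct summands of objects of $\tria_\cd(\cs)$ is closed under extensions by adding the split triangles $X'\arr{\id}X'\ra 0$ and $0\ra Z'\arr{\id}Z'$, using that a finite direct sum of triangles is a triangle. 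Two points you glossed over but which cause no trouble: $\cl$ is additive (it contains $0$ via the triangle $S\ra 0\ra\Sigma S\ra\Sigma S$ and is closed under finite sums because these are split extensions), and $\cl$ is closed under isomorphism (replace $M_r$ by any isomorphic object in the last triangle), both of which are needed for $\cl$ to be a full triangulated subcategory in the sense the paper uses and hence to contain $\tria_\cd(\cs)$.
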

\begin{proof}
The equivalence of assertions 1) and 2) follows from \cite[Example 1.3.11]{BeilinsonBernsteinDeligne}. The description of the objects in
$\thick_\cd(\cs)$ follows from \cite[Lemma 2.2.2]{BondalVanDenBergh}.
\end{proof}

\begin{corollary} \label{cor.preservation of thickness}
Let $F:\cd\ra\cd'$ be a triangle functor between triangulated categories and let $\cs$ be any set of objects in $\cd$. Then we have inclusions
\[F(\tria_\cd(\cs))\subseteq \tria_{\cd'}(F\cs),
\]
\[ F(\thick_\cd(\cs))\subseteq \thick_{\cd'}(F\cs)
\]
and, when $F$ preserves small coproducts, also 
\[F(\Tria_\cd(\cs))\subseteq \Tria_{\cd'}(F\cs).
\]
\end{corollary}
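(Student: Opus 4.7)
The plan is to exploit the description of $\tria_\cd(\cs)$ given in Lemma~\ref{lemma:description of tria(S) y thick(S)} for the first inclusion, deduce the statement about $\thick$ as a formal consequence, and handle $\Tria$ by a preimage argument.

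For the inclusion $F(\tria_\cd(\cs))\subseteq \tria_{\cd'}(F\cs)$, I would take $M\in\tria_\cd(\cs)$ and apply Lemma~\ref{lemma:description of tria(S) y thick(S)} to produce the tower of triangles
\[
M_{i-1}\ra M_i\ra S_i\ra \Sigma M_{i-1}\ko i=1\ko\dots\ko r
\]
with $M_0=S_0$, $M_r=M$, and $\Sigma^{n_i}S_i\in\cs$. Since $F$ is a triangle functor, applying $F$ gives a tower of triangles
\[
F(M_{i-1})\ra F(M_i)\ra F(S_i)\ra \Sigma F(M_{i-1}),
\]
and $\Sigma^{n_i}F(S_i)\cong F(\Sigma^{n_i}S_i)\in F\cs$. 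The same lemma then shows $F(M)\in\tria_{\cd'}(F\cs)$, proving the first inclusion.

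The inclusion for $\thick$ is now automatic from the ``moreover'' part of Lemma~\ref{lemma:description of tria(S) y thick(S)}: any $M\in\thick_\cd(\cs)$ is a direct summand of some $N\in\tria_\cd(\cs)$, so $F(M)$ is a direct summand of $F(N)\in\tria_{\cd'}(F\cs)\subseteq \thick_{\cd'}(F\cs)$; since $\thick_{\cd'}(F\cs)$ is closed under direct summands, $F(M)\in\thick_{\cd'}(F\cs)$.

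For the third inclusion, assuming $\cd,\cd'$ have small coproducts and $F$ preserves them, I would argue by preimage rather than by explicit construction (which would require a transfinite iteration). Consider the full subcategory
\[
\ce:=\{X\in\cd\mid F(X)\in\Tria_{\cd'}(F\cs)\}.
\]
Using that $F$ is a triangle functor preserving small coproducts and that $\Tria_{\cd'}(F\cs)$ is localizing, one checks immediately that $\ce$ is closed under shifts, extensions and small coproducts, hence is a localizing subcategory of $\cd$. Clearly $\cs\subseteq\ce$, so $\Tria_\cd(\cs)\subseteq\ce$, which is exactly the desired inclusion. There is no real obstacle here; the only point to be careful about is that $F$ must preserve the formation of the coproducts used to build $\Tria_\cd(\cs)$, which is the hypothesis.
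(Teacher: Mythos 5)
Your proposal is correct and follows essentially the same route as the paper: the $\tria$ and $\thick$ inclusions are read off from Lemma~\ref{lemma:description of tria(S) y thick(S)}, and the $\Tria$ inclusion is obtained by checking that the preimage-type subcategory $\{X : F(X)\in\Tria_{\cd'}(F\cs)\}$ is localizing and contains $\cs$, which is exactly the paper's argument.
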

\begin{proof}
The inclusions for $\tria$ and $\thick$ follow from Lemma~\ref{lemma:description of tria(S) y thick(S)}. For the case of $\Tria$, let $\tilde{\cs}$ be the set of objects $M$ of $\Tria_{\cd}(\cs)$ such that $FM\in\Tria_{\cd'}(F\cs)$. Put $\cc=\tria_{\cd}(\tilde{\cs})$. We have already proved that $F\cc$ is contained in $\Tria_{\cd'}(F\cs)$. It is clear that $\cc$ contains $\cs$ and is contained in $\Tria_{\cd}(\cs)$. Moreover, $\cc$ is closed under small coproducts. Indeed, if $\{M_{i}\}_{i\in I}$ is a small set of objects of $\cc$, then $F(\coprod_{I}M_{i})\cong\coprod_{I}F(M_{i})\in\Tria_{\cd'}(F\cs)$, and so $\coprod_{I}M_{i}\in\tilde{\cs}$ which, of course, implies $\coprod_{I}M_{i}\in\cc$. Therefore, $\cc=\Tria_{\cd}(\cs)$.
\end{proof}

\begin{definition} 
If $\cc_1\ko\dots\ko \cc_{n}\ko \cc$ are categories, then a functor 
\[F: \cc_{1}\times\dots\times\cc_{n}\ra\cc
\] 
is called an {\em $n$-functor}. In case $n=2$ it is called a {\em bifunctor}.
\end{definition}

\begin{definition}\label{def.triangulated n-functor}
Let $(\cd_{1}, \Sigma_{1})\ko\dots\ko (\cd_{n}, \Sigma_{n})\ko (\cd, \Sigma)$ be triangulated categories. A $n$-functor 
\[F: \cd_{1}\times\dots\times\cd_{n}\ra\cd
\] 
is said to be a {\em triangle $n$-functor} if it is a triangle functor in each variable. That is, for each index $i\in\{1,\dots ,n\}$ and each object $(M_1,...,M_{i-1},M_{i+1},...,M_n)$ of $\cd_{1}\times\dots\times\cd_{i-1}\times\cd_{i+1}\times\dots\times\cd_{n}$, there is a morphism of functors 
\[\tau_{i}: F(M_1,...,M_{i-1},?,M_{i+1},...,M_n)\circ \Sigma_{i}\ra\Sigma\circ F(M_1,...,M_{i-1},?,M_{i+1},...,M_n)
\]
such that the pair 
\[ (F(M_1,...,M_{i-1},?,M_{i+1},...,M_n), \tau_{i})
\]
is a triangle functor from $\cd_{i}$ to $\cd$.

If $F$ and $G$ are triangle $n$-functors $\cd_{1}\times\dots\times\cd_{n}\ra\cd$, then a {\em morphism of triangle $n$-functors} 
\[f:F\ra G
\] 
is a morphism of $n$-functors such that, for each index $i\in\{1,\dots ,n\}$ and each object $(M_1,...,M_{i-1},M_{i+1},...,M_n)$ of $\cd_{1}\times\dots\times\cd_{i-1}\times\cd_{i+1}\times\dots\times\cd_{n}$, the induced morphism of functors
\[
f(M_1,...,M_{i-1},?,M_{i+1},...,M_n)
\]
is a morphism of triangle functors. 
\end{definition}

\begin{corollary}\label{cor:point-stable form a thick subcategory}
Let $\cd_{1}\ko\dots\ko\cd_{n}\ko\cd$ be triangulated categories and let
\[F\ko G: \cd_{1}\times\dots\times\cd_{n}\ra\cd 
\] 
be triangle $n$-functors. Let $f:F\ra G$ be a morphisms of triangle $n$-functors. 
\begin{itemize}
\item[1)] For each index $i\in\{1,\dots ,n\}$ and each object $(M_1,...,M_{i-1},M_{i+1},...,M_n)$ of $\cd_{1}\times\dots\times\cd_{i-1}\times\cd_{i+1}\times\dots\times\cd_{n}$, the set $\cc_{(M_1,..,M_{i-1},M_{i+1},..,M_n)}$ of objects $M$ of $\cd_{i}$ for which $f(M_1,...,M_{i-1},M,M_{i+1},...,M_n)$ is an isomorphism is a thick
subcategory of $\cd_i$. 
\item[2)] The set $\cc$ of objects $M$ of $\cd_{i}$ for which the morphism of triangle $(n-1)$-functors
\[
f(?,...,?,M,?,...,?):F(?,...,?,M,?,...,?)\ra G(?,...,?,M,?,...,?)
\]
is an isomorphism, is a thick subcategory of $\cd_i$.
\end{itemize}
\end{corollary}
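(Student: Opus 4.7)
The plan is to reduce part 1) to the classical one-variable case and then derive part 2) formally from part 1).

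For part 1), fix the index $i$ and the tuple $(M_1,\ldots,M_{i-1},M_{i+1},\ldots,M_n)$. By Definition~\ref{def.triangulated n-functor}, the partial functors
\[
F' := F(M_1,\ldots,M_{i-1},?,M_{i+1},\ldots,M_n), \qquad G' := G(M_1,\ldots,M_{i-1},?,M_{i+1},\ldots,M_n)
\]
are triangle functors $\cd_i\to\cd$, and the componentwise restriction of $f$ is a morphism of triangle functors $f':F'\to G'$. Thus it suffices to prove the following one-variable statement: if $f':F'\to G'$ is a morphism of triangle functors between triangulated categories, then the full subcategory $\cc'$ of objects $M\in\cd_i$ for which $f'(M)$ is an isomorphism is a thick subcategory. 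First, $\cc'$ is closed under $\Sigma_i$ and $\Sigma_i^{-1}$: this is immediate from the fact that $f'$ commutes with the connecting natural isomorphisms of $F'$ and $G'$ with the shift. Second, $\cc'$ is closed under extensions: given a triangle $M'\ra M\ra M''\ra \Sigma_iM'$ in $\cd_i$ with $M',M''\in\cc'$, applying $F'$ and $G'$ and using $f'$ produces a morphism of triangles in $\cd$ in which two out of three components are isomorphisms, so by the triangulated version of the five lemma $f'(M)$ is an isomorphism as well. Third, $\cc'$ is closed under direct summands: if $M\oplus N\in\cc'$, then $f'(M\oplus N)=f'(M)\oplus f'(N)$ (since $F'$ and $G'$ are additive and $f'$ is natural), so both summands are isomorphisms.

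For part 2), recall that a morphism of functors is an isomorphism precisely when it is an isomorphism at every object. Hence, for a fixed index $i$,
\[
\cc \;=\; \bigcap_{(M_1,\ldots,M_{i-1},M_{i+1},\ldots,M_n)}\cc_{(M_1,\ldots,M_{i-1},M_{i+1},\ldots,M_n)},
\]
where the intersection runs over all objects of $\cd_1\times\cdots\times\cd_{i-1}\times\cd_{i+1}\times\cdots\times\cd_n$. Each piece in the intersection is a thick subcategory by part 1), and an arbitrary intersection of thick subcategories is again thick (closure under $\Sigma_i$, extensions and direct summands is preserved by intersection). Therefore $\cc$ is a thick subcategory of $\cd_i$.

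The only point requiring care is the closure under extensions in part 1), and this is exactly where the fact that $f$ is a morphism of triangle $n$-functors (not merely a morphism of $n$-functors) is used, so that the two triangles obtained by applying $F'$ and $G'$ fit into an actual morphism of triangles in $\cd$ and the triangulated five lemma applies.
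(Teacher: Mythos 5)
Your proof is correct and follows essentially the same route as the paper: reduce to the one-variable case, check closure under shifts and direct summands directly, obtain closure under extensions from the morphism-of-triangles/five-lemma argument (the paper cites Neeman's Proposition 1.1.20 for this step), and deduce part 2) from part 1) by intersecting thick subcategories. Your explicit remark that the triangle-functor compatibility with shifts is what makes the third square of the morphism of triangles commute is exactly the point the paper leaves implicit.
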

\begin{proof}
The set of objets  $\cc$ is precisely the intersection of the sets 
\[
\cc_{(M_1,..,M_{i-1},M_{i+1},...,M_n)}
\]
when the $(M_1,...,M_{i-1},M_{i+1},...,M_n)$ varies on 
$\cd_{1}\times\dots\times\cd_{i-1}\times\cd_{i+1}\times\dots\times\cd_{n}$. Since the
intersection of thick subcategories is again a thick subcategory, the proof is reduced to check that if $f:F\ra G$ is a morphism of triangle functors
$\cd'\ra\cd$ between triangulated categories, then the set $\cc'$ of objects $M'$ of $\cd'$ for which $f(M'):F(M')\ra G(M')$ is an isomorphism, is a thick subcategory of $\cd'$. Bearing in mind that the class $\cc'$ is clearly closed under taking direct summands and shifts, the result is a direct
consequence of \cite[Proposition 1.1.20]{Neeman2001}.
\end{proof}

\subsection{(Co)reflective objects}

Let
\[
\xymatrix{
\cm\ar@<1ex>[d]^{R} \\
\cn\ar@<1ex>[u]^{L}
}
\]
be an adjoint pair of functors between arbitrary categories. We denote by $\sigma$ the unit and by $\tau$ the counit.

\begin{definition}\label{def: (co)reflective}
We denote by $\op{Coref}$ the full subcategory of $\cm$ formed by the {\em coreflective objetcs}, \ie those objects $M$ such that the counit $\tau_{M}:LRM\arr{\sim}M$ is an isomorphism. We denote by $\op{Ref}$ the full subcategory of $\cn$ formed by the {\em reflective objects}, \ie those objects $N$ such that the unit $\sigma_{N}:N\arr{\sim} RLN$ is an isomorphism. 
\end{definition}

\begin{lemma}\label{equivalence between reflective and coreflective objects}
\begin{itemize}
\item[1)] The functors $L$ and $R$ induce mutually quasi-inverse equivalences between $\op{Coref}$ and $\op{Ref}$:
\[\xymatrix{\cm\ar@<1ex>[d]^{R} && \op{Coref}\ar@{_(->}[ll]\ar@<1ex>[d]^{R}_{\wr}\\
\cn\ar@<1ex>[u]^{L} && \op{Ref}\ar@{_(->}[ll]\ar@<1ex>[u]^{L}
}
\]
\item[2)] $R$ is fully faithful if and only if $\tau$ is an isomorphism (\ie $\op{Coref}=\cm$).
\item[3)] $L$ is fully faithful if and only if $\sigma$ is an isomorphism (\ie $\op{Ref}=\cn$).
\end{itemize}
\end{lemma}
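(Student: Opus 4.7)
The plan is that Parts 2 and 3 are essentially restatements of the standard Proposition~\ref{GabrielZisman result} and its dual (applied to the set $\cs$ of all morphisms of $\cm$ inverted by $R$, respectively). The counit $\tau$ is a natural isomorphism iff every $M\in\cm$ lies in $\op{Coref}$, which by that Proposition is equivalent to $R$ being fully faithful. Part 3 follows by the same argument applied to $L^{\op{op}}\dashv R^{\op{op}}$, or simply by directly using that a left adjoint is fully faithful iff its unit is a natural isomorphism.

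For Part 1, the approach is to use the triangle identities $R\tau\circ\sigma R=\id_R$ and $\tau L\circ L\sigma=\id_L$. First I would verify that the functors restrict appropriately: given $M\in\op{Coref}$, the identity $R\tau_M\circ\sigma_{RM}=\id_{RM}$ together with $\tau_M$ being an isomorphism forces $R\tau_M$ to be an isomorphism, whence $\sigma_{RM}$ is an isomorphism, so $RM\in\op{Ref}$. Dually, given $N\in\op{Ref}$, the identity $\tau_{LN}\circ L\sigma_N=\id_{LN}$ combined with $L\sigma_N$ being an isomorphism yields $\tau_{LN}$ an isomorphism, so $LN\in\op{Coref}$.

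Once the restrictions $L:\op{Ref}\to\op{Coref}$ and $R:\op{Coref}\to\op{Ref}$ are in place, the original adjunction restricts to an adjunction between these subcategories. By the very definition of $\op{Coref}$ and $\op{Ref}$, the counit $\tau$ is an isomorphism on $\op{Coref}$ and the unit $\sigma$ is an isomorphism on $\op{Ref}$. Hence the restricted adjunction has both unit and counit invertible, which is exactly the statement that $L$ and $R$ are mutually quasi-inverse equivalences between $\op{Coref}$ and $\op{Ref}$.

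There is no real obstacle here; the proof is entirely formal and amounts to a careful unpacking of the adjunction triangle identities. The only point to be slightly careful about is ensuring that the subcategories are truly preserved by $L$ and $R$ before invoking the adjunction on them, which is precisely what the triangle identity argument above accomplishes.
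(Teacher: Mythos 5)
Your proof is correct and follows essentially the same route as the paper: parts 2) and 3) are the standard adjunction facts (Proposition~\ref{GabrielZisman result} and its dual), and part 1) is exactly the "well-known equations involving the unit and the counit" argument the paper invokes, which you have correctly unpacked via the triangle identities to show that $L$ and $R$ restrict and that the restricted adjunction is an equivalence.
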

\begin{proof}
1) Use the well-known equations involving the unit and the counit of the adjunction.

2)-3) See for instance \cite[Lemma 1.2.1]{NicolasTesis}.
\end{proof}

As a consequence of Corollary~\ref{cor:point-stable form a thick subcategory} we have:

\begin{corollary}\label{(co)reflective objects form a thick subcategory}
If $\cm$ and $\cn$ are triangulated categories and $L$ and $R$ are triangle functors, then $\op{Coref}$ and $\op{Ref}$ are thick subcategories.
\end{corollary}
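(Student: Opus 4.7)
The plan is to derive the statement as a direct application of Corollary~\ref{cor:point-stable form a thick subcategory} in the case $n=1$. The key observation is that both $\op{Coref}$ and $\op{Ref}$ are precisely the loci of invertibility of a morphism of triangle functors, namely the counit and the unit of the adjunction $(L,R)$.

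First I would verify that the two functors involved are triangle functors. The identity $\id_{\cm}$ is trivially triangulated, and the composition $LR\colon \cm\ra\cm$ is triangulated because $L$ and $R$ are (the structure isomorphism $LR\Sigma\iso\Sigma LR$ is built from those of $L$ and $R$). Dually, $RL\colon\cn\ra\cn$ and $\id_{\cn}$ are triangle functors. Next, since $L$ and $R$ are triangle functors, the counit $\tau\colon LR\ra \id_{\cm}$ and unit $\sigma\colon\id_{\cn}\ra RL$ are morphisms of triangle functors; this is the standard compatibility of adjunction units/counits between triangle functors with the shift structure, which follows by a diagram chase from the definition of a triangle functor composed with the adjunction identities.

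Having set this up, I would now apply Corollary~\ref{cor:point-stable form a thick subcategory} (with $n=1$) to $f=\tau\colon F\ra G$, where $F=LR$ and $G=\id_{\cm}$: the full subcategory of objects $M\in\cm$ on which $\tau_{M}$ is an isomorphism is thick in $\cm$. By Definition~\ref{def: (co)reflective}, this full subcategory is exactly $\op{Coref}$. Hence $\op{Coref}$ is a thick subcategory of $\cm$.

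The argument for $\op{Ref}$ is dual: apply the same corollary to $f=\sigma\colon F\ra G$ with $F=\id_{\cn}$ and $G=RL$, obtaining that the objects $N\in\cn$ for which $\sigma_{N}$ is an isomorphism form a thick subcategory of $\cn$. By definition this is $\op{Ref}$. The only delicate point throughout the plan is the check that $\tau$ and $\sigma$ are morphisms of triangle functors (not merely of additive functors), but this is a standard compatibility that is already implicit in the hypotheses and does not require calculation beyond unwinding Definition~\ref{def.triangulated n-functor}.
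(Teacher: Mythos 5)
Your proposal is correct and follows exactly the route the paper intends: the statement is presented there precisely as a consequence of Corollary~\ref{cor:point-stable form a thick subcategory} applied with $n=1$ to the counit $\tau\colon LR\ra\id_{\cm}$ and the unit $\sigma\colon\id_{\cn}\ra RL$, which are indeed morphisms of triangle functors. Your explicit flagging of that last compatibility check is a reasonable addition, but the argument is otherwise the same.
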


\section{The derived category of a dg category}\label{The derived category of a dg category}

\subsection{Basic definitions}

For the notions of dg category, opposite dg category, right dg module, left dg module and dg bimdule we refer to \cite{KellerDDC,KellerDGC}. Given a dg category $\ca$, the category denoted by $\text{Dif}\ \ca$ in \cite{KellerDDC} is denoted by $\cc_{dg}\ca$ in \cite{KellerDGC}. We will follow the notation of this last article. In particular, given a dg category $\ca$ and an object $A$ of $\ca$, the right dg $\ca$-module $A^\we$ is defined to be the covariant dg functor
\[
A^\we=\Hom_{\ca}(?,A):\ca^\text{op}\ra\cc_{dg}k,
\]
where the base ring $k$ is regarded as a dg category with only one object. Similarly, the left dg $\ca$-module $A^\che$ is defined to be the covariant dg functor
\[
A^\che=\Hom_{\ca}(A,?):\ca\ra\cc_{dg}k.
\]

\subsection{Structured Hom-spaces}

If $\ca$ and $\cb$ are dg categories, $T$ is a dg $\cb$-$\ca$-bimodule and $M$ is a right dg $\ca$-module, then the notation
\[\Hom_{\cc_{dg}\ca}(T,M)
\]
is meaningless, because $T$ is not an object of $\cc_{dg}\ca$. Nevertheless, we use this kind of notation throughout the article, and we will explain here what we mean by this.

\begin{lemma}\label{structures of modules on Hom}
Let $\ca$ and $\cb$ be dg categories, $T$ a dg $\cb$-$\ca$-bimodule, $N$ a left dg $\cb$-module and $M$ a right dg $\ca$-module. Then:
\begin{enumerate}
\item We write $\Hom_{\cc_{dg}\ca}(T,M)$ to denote the following right dg $\cb$-module:
\[\cb^\text{op}\ra\cc_{dg}k\ko B\mapsto \Hom_{\cc_{dg}\ca}(T(?,B),M).
\]
It induces a covariant dg functor
\[\Hom_{\cc_{dg}\ca}(T,?): \cc_{dg}\ca\ra\cc_{dg}\cb.
\]
\item We write $\Hom_{\cc_{dg}\ca}(M,T)$ to denote the following left dg $\cb$-module:
\[\cb\ra\cc_{dg}k\ko B\mapsto \Hom_{\cc_{dg}\ca}(M,T(?,B)).
\] 
It induces a contravariant dg functor
\[\Hom_{\cc_{dg}\ca}(?,T): \cc_{dg}\ca\ra\cc_{dg}(\cb^\text{op}).
\]
\item We write $\Hom_{\cc_{dg}(\cb^\text{op})}(N,T)$ to denote the following right dg $\ca$-module:
\[\ca^\text{op}\ra\cc_{dg}k\ko A\mapsto \Hom_{\cc_{dg}(\cb^\text{op})}(N,T(A,?)).
\] 
It induces a contravariant dg functor
\[\Hom_{\cc_{dg}(\cb^\text{op})}(?,T): \cc_{dg}(\cb^\text{op})\ra \cc_{dg}\ca.
\]
\item We write $\Hom_{\cc_{dg}(\cb^\text{op})}(T,N)$ to denote the left dg $\ca$-module:
\[\ca\ra\cc_{dg}k\ko A\mapsto\Hom_{\cc_{dg}(\cb^\text{op})}(T(A,?),N).
\] 
It induces a covariant dg functor
\[\Hom_{\cc_{dg}(\cb^\text{op})}(T,?): \cc_{dg}(\cb^\text{op})\ra\cc_{dg}(\ca^\text{op}).
\]
\end{enumerate} 
\end{lemma}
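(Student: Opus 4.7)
The four parts are four instances of one construction, so the plan is to prove part (1) carefully and then explain how parts (2)--(4) reduce to it by swapping left/right sides and passing to opposite dg categories. The underlying observation is that since $T$ is a dg $\cb$-$\ca$-bimodule, fixing one variable turns $T$ into a dg functor from the other side (or its opposite) into the appropriate category of dg modules, and the (enriched) Hom-spaces in $\cc_{dg}\ca$ inherit this variance automatically.

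For part (1), I would first observe that $B \mapsto T(?,B)$ is a covariant dg functor $\cb \to \cc_{dg}\ca$, by unwinding the definition of dg bimodule. Applying the enriched contravariant Hom $\Hom_{\cc_{dg}\ca}(?,M):\cc_{dg}\ca \to \cc_{dg}k$ then produces a dg functor $\cb^\text{op} \to \cc_{dg}k$, that is, a right dg $\cb$-module, namely $B \mapsto \Hom_{\cc_{dg}\ca}(T(?,B),M)$. The right $\cb$-action, grading and differential are inherited termwise from those on $T$ and $M$; the Leibniz rule and the associativity of the action reduce to the Leibniz rule and bimodule axioms for $T$ together with the fact that $\cc_{dg}\ca$ is a dg category. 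For functoriality in $M$, sending $f:M\ra M'$ to postcomposition $f_\ast$ gives a morphism of right dg $\cb$-modules because the $\cb$-action is defined by precomposition, which commutes with $f_\ast$; that this assignment respects grading, differential and composition is immediate from the definition of a dg natural transformation.

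Parts (2)--(4) follow by dualizing. Part (3) is (1) applied to the $\ca$-$\cb$-bimodule $T$ viewed with the roles of $\ca$ and $\cb$ exchanged (\ie over the opposite categories), which turns a right $\cb^\text{op}$-module $N$ into a right $\ca$-module. Parts (2) and (4) are the contravariant analogues: here the fixed variable produces a contravariant dg functor (a left module) instead, but the proof is formally the same once one identifies $\Hom_{\cc_{dg}\ca}(M,T(?,B))$ as the evaluation of the dg functor $\Hom_{\cc_{dg}\ca}(M,?)\circ T(?,-)$ at $B$, and similarly for (4).

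The only real obstacle is bookkeeping: keeping track of variances, of which side of $T$ is being probed, and of the Koszul signs that appear when interchanging a differential with the action. No conceptual difficulty arises, because once the enriched Hom is recognized as a dg bifunctor $\cc_{dg}\ca^\text{op}\times\cc_{dg}\ca\ra\cc_{dg}k$, the claimed module structures and induced dg functors are obtained by precomposition with the dg functor determined by $T$, and all the required axioms (grading, differential compatibility, action and composition) are inherited for free from those of the input data.
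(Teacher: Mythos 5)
The paper states this lemma without proof (it functions as a definition of the notation), and your verification is the correct and standard one: recognize the enriched $\Hom$ as a dg bifunctor $(\cc_{dg}\ca)^\text{op}\times\cc_{dg}\ca\ra\cc_{dg}k$ and precompose with the dg functor $\cb\ra\cc_{dg}\ca$, $B\mapsto T(?,B)$ (resp.\ $\ca^\text{op}\ra\cc_{dg}(\cb^\text{op})$, $A\mapsto T(A,?)$) determined by the bimodule. One small bookkeeping correction: under the left--right swap, part (1) dualizes to part (4) and part (2) to part (3) --- i.e.\ the two ``$\Hom$ out of $T$'' constructions (covariant in the module variable) pair with each other, as do the two ``$\Hom$ into $T$'' constructions --- not (1) with (3) as you wrote; this does not affect the validity of the argument, since each part is in any case an instance of the same composition of dg functors.
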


\begin{remark}
It is important to notice that for each $A\in\ca$ we have an isomorphism $\Hom_{\cc_{dg}\ca}(A^\we,T)\cong T(A,?)$ in $\cc_{dg}(\cb^\text{op})$ and for each $B\in\cb$ we have an isomorphism $\Hom_{\cc_{dg}(\cb^\text{op})}(B^\che,T)\cong T(?,B)$ in $\cc_{dg}\ca$.
\end{remark}

\subsection{Triangulated categories associated to a dg category}

Let $\ca$ be a dg category, and consider its associated dg category $\cc_{dg}\ca$ of right dg $\ca$-modules. 

\subsubsection{The category up to homotopy}

Its corresponding category $Z^0(\cc_{dg}\ca)$ of $0$th-cocycles will be denoted by $\cc \ca$. The objects are again the right dg $A$-modules, and the morphisms are morphisms $f$ of $\cc_{dg}\ca$ homogeneous of degree $0$ and compatible with the differentials.  It turns out that $\cc\ca$ is a Frobenious category \cite{Keller1996}, where the conflations are given by short exact sequences
\[0\ra L\arr{f}M\arr{g} N\ra 0 
\]
of right dg $\ca$-modules, where $f$ is a section in $\cg\ca$ and $g$ is a retraction in $\cg\ca$, and $\cg\ca$ is the graded category of  graded right modules over  $\ca$ (see in \cite{KellerDDC} further information). In this situation, $f$ is said to be an {\em inflation} and $g$ is said to be a {\em deflation}.

The corresponding projective-injective modules are precisely the contractible ones. Hence, the associated stable category $\ul{\cc\ca}$, which is triangulated, is the quotient of the additive category $\cc\ca$ by the two-sided ideal formed by those morphisms factoring through a contractible module, or, in other words, those morphisms which are null-homotopic. This triangulated category will be denoted by $\ch\ca$, and it is said to be the {\em category of right dg $\ca$-modules up to homotopy}.

\subsubsection{The unbounded derived category} 

The full subcategory $\cn$ of $\ch\ca$ formed by those modules which are acyclic is a triangulated subcategory. The corresponding triangle quotient (see \cite{Verdier1996, Neeman2001}), $\ch\ca/\cn$, is denoted by $\cd\ca$, and it is said to be the {\em unbounded derived category of right dg $\ca$-modules}. 

\subsubsection{Unbounded resolutions}\label{Unbounded resolutions}

Let $M$ be a right dg $\ca$-module such that 
\[
\Hom_{\ch\ca}(M,N)=0
\]
for any acyclic $N$. In this case $M$ is said to be {\em $\ch$-projective} or {\em homotopically projective}. The full triangulated subcategory of $\ch\ca$ formed by the $\ch$-projective modules is denoted by $\ch_{\bf p}\ca$.

Similarly, let $M$ be a right dg $\ca$-module such that 
\[
\Hom_{\ch\ca}(N,M)=0
\] 
for any acyclic $N$. In this case $M$ is said to be {\em $\ch$-injective} or {\em homotopically injective}. The full triangulated subcategory of $\ch\ca$ formed by the $\ch$-injective modules is denoted by $\ch_{\bf i}\ca$.

For any module $M\in\ch\ca$ there are triangles (unique, up to a unique isomorphism extending the indentity in $M$),
\[{\bf p}_{\ca}M\arr{\pi} M\ra {\bf a}_{\ca}M\ra \Sigma{\bf p}_{\ca}M
\]
and
\[{\bf a'}_{\ca}M\ra M\arr{\iota}{\bf i}_{\ca}M\ra\Sigma{\bf a'}_{\ca}M,
\]
in $\ch\ca$ such that ${\bf p}_{\ca}M$ is $\ch$-projective (is said to be the {\em $\ch$-projective resolution} of $M$), ${\bf a}_{\ca}M$ and ${\bf a'}_{\ca}M$ are acyclic and ${\bf i}_{\ca}M$ is $\ch$-injective (is said to be the {\em $\ch$-injective resolution} of $M$). 

\begin{remark}\label{HprojHinjQsi}
We can use the long exact sequence of homology to prove that both $\pi$ and $\iota$ are quasi-isomorphisms.
\end{remark}

The map $M\mapsto{\bf p}_{\ca}M$ underlies a functor 
\[
{\bf p}_{\ca}:\ch\ca\ra\ch_{\bf p}\ca
\] 
which is right adjoint to the inclusion. This functor factors through the quotient $q:\ch\ca\ra\cd\ca$ to give a triangle equivalence
\[\xymatrix{
\ch\ca\ar[r]^{q}\ar[d]_{{\bf p}_{\ca}}&\cd\ca\ar[dl]_{\sim}^{{\bf p}_{\ca}} \\
\ch_{\bf p}\ca &
}
\]
Similarly, the map $M\mapsto{\bf i}_{\ca}M$ underlies a functor 
\[
{\bf i}_{\ca}:\ch\ca\ra\ch_{\bf i}\ca
\] 
which is left adjoint to the inclusion. This functor factors through the quotient $q:\ch\ca\ra\cd\ca$ to give a triangle equivalence
\[\xymatrix{
\ch\ca\ar[r]^{q}\ar[d]_{{\bf i}_{\ca}}&\cd\ca\ar[dl]_{\sim}^{{\bf i}_{\ca}} \\
\ch_{\bf i}\ca &
}
\]

As proved in \cite{KellerDDC}, any $\ch$-projective module in $\ch\ca$ is isomorphic to a module $P$ which admits a filtration
\[0=P_{-1}\ra P_{0}\ra P_{1}\ra\dots\ra P_{n-1}\ra P_{n}\ra\dots\ra P
\]
such that
\begin{itemize}
\item[a)] Each arrow $P_{n-1}\ra P_{n}$ is an inflation in $\cc\ca$.
\item[b)] Each quotient $P_{n}/P_{n-1}$ is a small coproduct of shifts of copies of modules of the form $A^\we\ko A\in\ca$.
\item[c)] $P$ is the colimit in $\cc\ca$ of this filtration.
\end{itemize}

\subsubsection{The perfect derived category}

The full subcategory $\per\ca$ of $\cd\ca$ formed by those modules which are compact objects in $\cd\ca$ is said to be the {\em perfect derived category of right dg $\ca$-modules}.

It is not difficult to prove that a module $M$ is compact if and only if its homotopically projective resolution ${\bf p}_{\ca}M$ can be taken to be a direct summand of a module $P$ admiting a finite filtration
\[0=P_{-1}\ra P_{0}\ra P_{1}\ra\dots\ra P_{n-1}\ra P_{n}=P
\]
such that
\begin{itemize}
\item[a)] Each arrow $P_{n-1}\ra P_{n}$ is an inflation in $\cc\ca$.
\item[b)] Each quotient $P_{n}/P_{n-1}$ is a finite coproduct of shifts of copies of modules of the form $A^\we\ko A\in\ca$.
\end{itemize}

Thus $\per\ca=\op{thick}_{\cd\ca}(\{A^\we\}_{A\in\ca})$.

\subsection{Derived functors}

Let $\ca$ and $\cb$ be dg categories, and let $T$ be a dg $\cb$-$\ca$-bimodule. As said in Lemma~\ref{structures of modules on Hom}, we have two covariant dg functors,
\[\Hom_{\cc_{dg}\ca}(T,?):\cc_{dg}\ca\ra\cc_{dg}\cb
\]
and
\[\Hom_{\cc_{dg}(\cb^\text{op})}(T,?):\cc_{dg}(\cb^\text{op})\ra\cc_{dg}(\ca^\text{op}),
\]
and two contravariant dg functors,
\[\Hom_{\cc_{dg}\ca}(?,T):\cc_{dg}\ca\ra\cc_{dg}(\cb^\text{op})
\]
and
\[\Hom_{\cc_{dg}(\cb^\text{op})}(?,T):\cc_{dg}(\cb^\text{op})\ra\cc_{dg}\ca.
\]
We can describe the triangles in the categories up to homotopy in terms of mapping cones. This is useful to check that the former functors induce triangle functors between the corresponding categories up to homotopy:
\[\Hom_{\cc_{dg}\ca}(T,?):\ch\ca\ra\ch\cb,
\]
\[\Hom_{\cc_{dg}(\cb^\text{op})}(T,?):\ch(\cb^\text{op})\ra\ch(\ca^\text{op}),
\]
\[\Hom_{\cc_{dg}\ca}(?,T):\ch\ca\ra\ch(\cb^\text{op}),
\]
and
\[\Hom_{\cc_{dg}(\cb^\text{op})}(?,T):\ch(\cb^\text{op})\ra\ch\ca.
\]
If $T$ is arbitrary, then these functor need not preserve acyclic objects, and so they do not induce triangle functors between the corresponding derived categories. Nevertheless, we can use $\ch$-injective and $\ch$-projective resolution to construct `approximation' to those induced functors. Namely, we can consider the following compositions:
\[\xymatrix{
\RHom_{\ca}(T,?):\cd\ca\ar[r]^{\hspace{1cm}{\bf i}_{\ca}} & \ch\ca\ar[rr]^{\Hom_{\cc_{dg}\ca}(T,?)} && \ch\cb\ar[r]^{q} & \cd\cb,
}
\]
\[\xymatrix{
\RHom_{\cb^\text{op}}(T,?):\cd(\cb^\text{op})\ar[r]^{\hspace{1cm}{\bf i}_{\cb^\text{op}}} &\ch(\cb^\text{op})\ar[rr]^{\Hom_{\cc_{dg}(\cb^\text{op})}(T,?)} && \ch(\ca^\text{op})\ar[r]^{q} & \cd(\ca^\text{op}),
}
\]
\[\xymatrix{
\RHom_{\ca}(?,T):\cd\ca\ar[r]^{\hspace{1cm}{\bf p}_{\ca}}&\ch\ca\ar[rr]^{\Hom_{\cc_{dg}\ca}(?,T)} && \ch(\cb^\text{op})\ar[r]^{q} &\cd(\cb^\text{op}),
}
\]
and
\[\xymatrix{
\RHom_{\cb^\text{op}}(?,T):\cd(\cb^\text{op})\ar[r]^{\hspace{1cm}{\bf p}_{\cb^\text{op}}}&\ch(\cb^\text{op})\ar[rr]^{\Hom_{\cc_{dg}(\cb^\text{op})}(?,T)} && \ch(\ca)\ar[r]^{q} & \cd\ca.
}
\]
We use the notation `$\RHom$' to express that these functors between derived categories are the {\em right derived} version of the corresponding functors between the categories up to homotopy (see for example \cite{Keller1996} or \cite{Deligne}). 

Starting with the bimodule $T$, we can also define another two dg covariant functors (see \cite{KellerDDC}):
\[?\otimes_{\cb}T: \cc_{dg}\cb\ra\cc_{dg}\ca
\]
and
\[T\otimes_{\ca}?:\cc_{dg}(\ca^\text{op})\ra\cc_{dg}(\cb^\text{op}).
\]

\begin{remark}\label{tensoring with free modules}
It is important to notice that for each $B\in\cb$ we have an isomorphism $B^\we\otimes_{\cb}T\cong T(?,B)$ in $\cc_{dg}\ca$ and for each $A\in\ca$ we have an isomorphism $T\otimes_{\ca}A^\che\cong T(A,?)$ in $\cc_{dg}(\cb^\text{op})$.
\end{remark}

One can check that they preserve conflations, and so they induce triangle functors between the corresponding categories up to homotopy:
\[?\otimes_{\cb}T: \ch\cb\ra\ch\ca
\]
and
\[T\otimes_{\ca}?:\ch(\ca^\text{op})\ra\ch(\cb^\text{op}).
\]
Again, these functors need not to preserve acyclic objects, but still we can use $\ch$-projective resolutions to define functors between the corresponding derived categories,
\[\xymatrix{
?\otimes^\L_{\cb}T: \cd\cb\ar[r]^{\hspace{1cm}{\bf p}_{\cb}} & \ch\cb\ar[rr]^{?\otimes_{\cb}T} &&\ch\ca\ar[r]^{q} & \cd\ca
}
\]
and
\[\xymatrix{
T\otimes^\L_{\ca}?:\cd(\ca^\text{op})\ar[r]^{\hspace{1cm}{\bf p}_{\ca^\text{op}}} & \ch(\ca^\text{op})\ar[rr]^{T\otimes_{\ca}?} &&\ch(\cb^\text{op})\ar[r]^{q} & \cd(\cb^\text{op}).
}
\]
We use the symbol `$\L$' to express the fact that these functors are the {\em left derived} version of the corresponding functors between the categories up to homotopy (see for example \cite{Keller1996} or \cite{Deligne}).

\begin{remark}
Derived functors can be also defined by a universal property as explained e.g. in \cite[\S\S\ 8.4-8.5]{Hirschhorn}. However, we shall make use of the particular construction of the derived functors, and this is why we present them as compositions in which localizations and resolutions are involved. 
\end{remark}

\section{Adjunctions}\label{Adjunctions}

These six functors between derived categories are organized in three couples of adjoint functors:
\[\xymatrix{\cd\ca\ar@<1ex>[d]^{\RHom_{\ca}(T,?)} \\
\cd\cb,\ar@<1ex>[u]^{?\otimes^\L_{\cb}T}
}
\hspace{1cm}
\xymatrix{\cd(\cb^\text{op})\ar@<1ex>[d]^{\RHom_{\cb^\text{op}}(T,?)} \\
\cd(\ca^\text{op}),\ar@<1ex>[u]^{T\otimes^\L_{\ca}?}
}
\hspace{1cm}
\xymatrix{(\cd\ca)^{\text{op}}\ar@<1ex>[d]^{\RHom_{\ca}(?,T)} \\
\cd(\cb^\text{op}).\ar@<1ex>[u]^{\RHom_{\cb^\text{op}}(?,T)}
}
\]

\begin{lemma}\label{rem.relations between RHom and tensor}
Let $\ca$ and $\cb$ be small dg categories and let $T$ be a dg $\cb$-$\ca$-bimodule. Then:
\begin{itemize}
\item[1)] For all $B\ko B'\in\cb$ there is an isomorphism 
\[
\RHom_\ca(T(?,B),T)(B')\arr{\sim}\RHom_\ca(T,T(?,B'))(B)
\]
in $\cd k$.  
\item[2)] For all $A\ko A'\in\ca$ there is an isomorphism
\[
\RHom_{\cb^\text{op}}(T(A,?),T)(A')\arr{\sim}\RHom_{\cb^\text{op}}(T,T(A',?))(A)
\]
in $\cd k$. 
\item[3)] For each $A\in\ca$ we have isomorphisms  
\[
\RHom_\ca(A^\wedge ,T)\cong T(A,?)\cong T\otimes_\ca^\L A^\wedge
\] 
in $\cd(\cb^\text{op})$.
\item[4)] For each $B\in\cb$, we have isomorphisms 
\[
\RHom_{\cb^\text{op}}(B^\vee ,T)\cong T(?,B)\cong B^\vee\otimes_\cb^\L T
\]
in $\cd\ca$.
\end{itemize}
\end{lemma}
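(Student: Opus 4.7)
The plan is to dispatch (3) and (4) first, since they unlock (1) and (2) essentially immediately.

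For (3), the crux is that the representable right dg module $A^\wedge$ is $\ch$-projective: it fits trivially into the filtration criterion of \S\ref{Unbounded resolutions} with a single graded piece. Hence no resolution is needed, and
\[
\RHom_\ca(A^\wedge, T) \cong \Hom_{\cc_{dg}\ca}(A^\wedge, T) \cong T(A, ?),
\]
the second isomorphism being the Yoneda-type identification recorded in the remark just after Lemma~\ref{structures of modules on Hom}. Dually, $A^\vee \in \cc_{dg}(\ca^\text{op})$ is $\ch$-projective over the opposite category, so $T \otimes_\ca^\L A^\vee \cong T \otimes_\ca A^\vee \cong T(A, ?)$ by Remark~\ref{tensoring with free modules}. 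Statement (4) follows by the entirely symmetric argument with $\ca$ and $\cb$ interchanged.

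For (1), I would unwind the two bimodule structures involved. By Lemma~\ref{structures of modules on Hom}(2), the left dg $\cb$-module structure on $\RHom_\ca(T(?,B), T)$ sends $B'$ to $\Hom_{\cc_{dg}\ca}(\mathbf{p}_\ca T(?, B), T(?, B'))$, so this is exactly the LHS of the claimed isomorphism. By Lemma~\ref{structures of modules on Hom}(1), the right dg $\cb$-module structure on $\RHom_\ca(T, T(?, B'))$ sends $B$ to $\Hom_{\cc_{dg}\ca}(T(?, B), \mathbf{i}_\ca T(?, B'))$, which is the RHS. Both are canonical representatives of the derived bifunctor $\RHom_\ca(T(?, B), T(?, B'))$, and composing the quasi-isomorphisms $\mathbf{p}_\ca T(?, B) \to T(?, B)$ and $T(?, B') \to \mathbf{i}_\ca T(?, B')$ produces the required comparison in $\cd k$. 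Statement (2) follows by running the same argument with $T$ viewed as an $\ca^\text{op}$-$\cb^\text{op}$-bimodule.

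The only real subtlety is the bookkeeping of which copy of $T$ in $\RHom_\ca(T, T)$ contributes which $\cb$-action; once the conventions of Lemma~\ref{structures of modules on Hom}(1)--(2) are pinned down, the comparison is the standard one for balanced derived bifunctors. If one prefers to avoid working with explicit resolutions, an alternative route for (1) is to substitute $T(?, B) \simeq B^\wedge \otimes_\cb^\L T$ via part (4), apply the tensor–hom adjunction to obtain $\RHom_\cb(B^\wedge, \RHom_\ca(T, T(?, B')))$, and then invoke the Yoneda identity $\RHom_\cb(B^\wedge, M) \simeq M(B)$, which once again rests on the $\ch$-projectivity of $B^\wedge$.
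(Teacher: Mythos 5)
Your proof is correct and follows essentially the same route as the paper: for (1) and (2) the paper likewise unwinds both sides to $\Hom_{\cc_{dg}\ca}({\bf p}_{\ca}T(?,B),T(?,B'))$ and $\Hom_{\cc_{dg}\ca}(T(?,B),{\bf i}_{\ca}T(?,B'))$ and compares them through the common middle term $\Hom_{\cc_{dg}\ca}({\bf p}_{\ca}T(?,B),{\bf i}_{\ca}T(?,B'))$ via the two induced quasi-isomorphisms. Parts (3) and (4) are simply cited to Keller in the paper, and your $\ch$-projectivity argument for the representables is the standard justification.
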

\begin{proof}
Statements 3) and 4) are well known (see \cite{KellerDDC}).  We just prove 1) since 2) is entirely similar. By definition, we have
\[
\RHom_{\ca}(T(?,B),T)(B')=\Hom_{\cc_{dg}\ca}({\bf p}_{\ca}T(?,B),T(?,B'))
\]
and
\[
\RHom_{\ca}(T,T(?,B'))(B)=\Hom_{\cc_{dg}\ca}(T(?,B),{\bf i}_{\ca}T(?,B')).
\]
But we have quasi-isomorphisms of dg $k$-modules
\[
\Hom_{\cc_{dg}\ca}({\bf p}_{\ca}T(?,B),T(?,B'))\ra \Hom_{\cc_{dg}\ca}({\bf p}_{\ca}T(?,B),{\bf i}_{\ca}T(?,B'))
\]
and
\[
\Hom_{\cc_{dg}\ca}(T(?,B),{\bf i}_{\ca}T(?,B'))\ra \Hom_{\cc_{dg}\ca}({\bf p}_{\ca}T(?,B),{\bf i}_{\ca}T(?,B')).
\]
\end{proof}

\begin{notation}
Consider the adjunctions at the beginning of this section. We denote by: 
\begin{itemize}
\item $\lambda(N):N\ra\RHom_{\ca}(T,N\otimes^\L_{\cb}T)$ the unit of the first adjunction.
\item $\varepsilon(M): \RHom_{\ca}(T,M)\otimes_{\cb}^\L T\ra M$ the counit of the first adjunction.
\item $\rho(M):M\ra\RHom_{\cb^\text{op}}(T,T\otimes^\L_{\ca}M)$ the unit of the second adjunction.
\item $\phi(N): T\otimes^\L_{\ca}\RHom_{\cb^\text{op}}(T,N)\ra N$ the counit of the second adjunction.
\item $\sigma(N): N\ra\RHom_{\ca}(\RHom_{\cb^\text{op}}(N,T),T) $ the unit of the third adjuction.
\item $\tau(M):M\ra\RHom_{\cb^\text{op}}(\RHom_{\ca}(M,T),T)$ the counit of the third adjunction (regarded in $\cd\ca$).
\end{itemize}
\end{notation}

\begin{proposition} \label{prop.relation between unit-counit isomorphism}
Consider the following statements:
\begin{itemize}
\item[1)] $\lambda(B^\we)$ is an isomorphism for each $B\in\cb$. 
\item[1')] $\lambda(X)$ is an isomorphism for each $X\in\per(\cb)$. 
\item[2)] $\varepsilon(T(?,B))$ is an isomorphism for each $B\in\cb$. 
\item[2')] $\varepsilon(M)$ is an isomorphism for each $M\in\thick_{\cd\ca}(T(?,B)\ko B\in\cb)$. 
\item[3)] $\sigma (B^\vee)$ is an isomorphism for each $B\in\cb$.
\item[3')] $\sigma (X)$ is an isomorphism for each $X\in\per(\cb^\text{op})$. 
\item[4)] $\tau(T(?,B))$ is an isomorphism for each $B\in\cb$. 
\item[4')] $\tau(M)$ is an isomorphism for each $M\in\thick_{\cd\ca}(T(?,B)\ko B\in\cb)$.
\end{itemize}
There is the following chain of implications:
\[
2)\Leftrightarrow 2')\Leftarrow 1')\Leftrightarrow 1)\Leftrightarrow 3)\Leftrightarrow 3')\Rightarrow 4')\Leftrightarrow 4).
\]
\end{proposition}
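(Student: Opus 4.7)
The plan is to assemble the chain $2)\Leftrightarrow 2')\Leftarrow 1')\Leftrightarrow 1)\Leftrightarrow 3)\Leftrightarrow 3')\Rightarrow 4')\Leftrightarrow 4)$ in four logical moves: passages between primed and unprimed statements via a thickness argument, the forward implication $1)\Rightarrow 2)$ via a triangle identity, the central equivalence $1)\Leftrightarrow 3)$ via a Yoneda-type computation, and the implication $3)\Rightarrow 4)$ via another triangle identity.

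First, I dispatch the four ``primed $\Leftrightarrow$ unprimed'' equivalences at once. By Corollary~\ref{(co)reflective objects form a thick subcategory}, the full subcategory of $\cd\cb$ (resp.\ $\cd\ca$, $\cd(\cb^\text{op})$, $\cd\ca$) consisting of objects on which $\lambda$ (resp.\ $\varepsilon$, $\sigma$, $\tau$) is an isomorphism is thick. Combined with the descriptions $\per(\cb)=\thick_{\cd\cb}(\{B^\we\}_{B\in\cb})$ and $\per(\cb^\text{op})=\thick_{\cd(\cb^\text{op})}(\{B^\vee\}_{B\in\cb})$ recorded in Section~\ref{The derived category of a dg category}, this yields $1)\Leftrightarrow 1')$ and $3)\Leftrightarrow 3')$; the equivalences $2)\Leftrightarrow 2')$ and $4)\Leftrightarrow 4')$ are immediate from the very definition of $\thick_{\cd\ca}(T(?,B),\,B\in\cb)$.

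For $1')\Rightarrow 2')$ it suffices, by the previous step, to prove $1)\Rightarrow 2)$, and this is immediate from the triangle identity for the adjoint pair $(?\otimes^\L_\cb T,\RHom_\ca(T,?))$: for every $N\in\cd\cb$,
\[
\varepsilon(N\otimes^\L_\cb T)\circ(\lambda(N)\otimes^\L_\cb T)=\id_{N\otimes^\L_\cb T}.
\]
Taking $N=B^\we$ and using Remark~\ref{tensoring with free modules} to identify $B^\we\otimes^\L_\cb T\cong T(?,B)$, the hypothesis that $\lambda(B^\we)$ is iso forces $\varepsilon(T(?,B))$ to be iso. In an entirely parallel manner, for $3')\Rightarrow 4')$ it is enough to show $3)\Rightarrow 4)$, and the triangle identity for the third adjunction reads, for every $N\in\cd(\cb^\text{op})$,
\[
\RHom_{\cb^\text{op}}(\sigma(N),T)\circ\tau(\RHom_{\cb^\text{op}}(N,T))=\id_{\RHom_{\cb^\text{op}}(N,T)};
\]
hence if $\sigma(N)$ is iso then so is $\tau(\RHom_{\cb^\text{op}}(N,T))$, and setting $N=B^\vee$ together with the identification $\RHom_{\cb^\text{op}}(B^\vee,T)\cong T(?,B)$ from Lemma~\ref{rem.relations between RHom and tensor}(4) gives 4).

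The central and only delicate step is $1)\Leftrightarrow 3)$, which I will treat by showing that both statements are equivalent to the following single system of conditions: for every $B,B'\in\cb$ and every $n\in\Z$, the canonical map
\[
H^n\Hom_\cb(B',B)\longrightarrow\Hom^n_{\cd\ca}(T(?,B'),T(?,B))
\]
is an isomorphism. For 1), a morphism in $\cd\cb$ is an iso iff it induces an iso on cohomology at each $B'\in\cb$, and the cohomology of $\lambda(B^\we)$ at $B'$ computes, by the Yoneda formula, $H^n\Hom_\cb(B',B)$ on the source, while on the target the first adjunction together with $B'^\we\otimes^\L_\cb T\cong T(?,B')$ yields $\Hom^n_{\cd\ca}(T(?,B'),T(?,B))$. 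For 3), an entirely parallel computation in $\cd(\cb^\text{op})$, now using Lemma~\ref{rem.relations between RHom and tensor}(4) to rewrite $\RHom_\ca(\RHom_{\cb^\text{op}}(B^\vee,T),T)\cong\RHom_\ca(T(?,B),T)$, produces the same system of conditions after interchange of the dummy labels $B$ and $B'$. The main obstacle is precisely this variance bookkeeping: since $\lambda$ lives in $\cd\cb$ and $\sigma$ in $\cd(\cb^\text{op})$, the two natural maps a priori read hom in opposite orders, and one must verify that quantifying over all pairs makes them coincide; once this is done, the full chain is assembled from the four steps above.
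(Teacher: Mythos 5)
Your proof is correct and follows essentially the same route as the paper: thickness of the loci of isomorphy for the primed/unprimed equivalences, the triangle identities for $1)\Rightarrow 2)$ and $3)\Rightarrow 4)$, and a cohomology-level Yoneda comparison for $1)\Leftrightarrow 3)$. The ``variance bookkeeping'' you defer at the end is precisely what the paper settles by exhibiting, for each pair $B,B'$, a commutative square identifying $\lambda(B^\we)(B')$ and $\sigma(B'^\che)(B)$ up to the quasi-isomorphisms induced by the resolutions ${\bf p}_\ca T(?,B')\ra T(?,B')$ and $T(?,B)\ra{\bf i}_\ca T(?,B)$.
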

\begin{proof}
Applying Corollary~\ref{cor:point-stable form a thick subcategory} with $n=1$, we know that if $f$ is any of the morphisms of functors
$\lambda\ko \varepsilon\ko \sigma$ or $\tau$, then the class of objects $\cc$ for which $f_\cc$ is an isomorphism is a thick subcategory of the
corresponding triangulated category. That gives the equivalence $i)\Leftrightarrow i')$ for each $i=1,2,3,4$ since the perfect derived category is precisely the thick subcategory generated by the representable modules.

$1)\Rightarrow 2)$ Statement 1), together with Lemma~\ref{rem.relations between RHom and tensor}, says that we have an isomorphism 
\[B^\we\cong\RHom_{\ca}(T,T(?,B))
\]
in $\cd\cb$. After applying $?\otimes^\L_{\cb}T$, and taking into account again Lemma~\ref{rem.relations between RHom and tensor}, we get an isomorphism
\[T(?,B)\cong\RHom_{\ca}(T,T(?,B))\otimes^\L_{\cb}T
\]
in $\cd\ca$, which happens to be the inverse of the counit $\varepsilon(T(?,B))$.

$3)\Rightarrow 4)$ Statement 3), together with Lemma~\ref{rem.relations between RHom and tensor}, says that we have an isomorphism 
\[B^\vee\cong\RHom_{\ca}(T(?,B),T)
\]
in $\cd(\cb^\text{op})$. After applying $\RHom_{\cb^\text{op}}(?,T)$, and taking into account again Lemma~\ref{rem.relations between RHom and tensor}, we get an isomorphism
\[T(?,B)\cong\RHom_{\cb^\text{op}}(\RHom_{\ca}(T(?,B),T),T)
\]
in $\cd\ca$, which happens to be the counit $\tau(T(?,B))$.

$1)\Leftrightarrow 3)$ It is easy to check that for each $B\ko B'\in\cb$ we have a commutative square in the category of complexes over $k$,
\[\xymatrix{
\Hom_{\cb}(B',B)\ar[rr]^{\lambda(B^\we)(B')}\ar[d]_{\sigma(B'^\che)(B)} && \Hom_{\cc_{dg}\ca}(T(?,B'),{\bf i}_{\ca}T(?,B))\ar[d]^{\pi^\che} \\
\Hom_{\cc_{dg}\ca}({\bf p}_{\ca}T(?,B'),T(?,B))\ar[rr]^{\iota^\we}&& \Hom_{\cc_{dg}\ca}({\bf p}_{\ca}T(?,B'),{\bf i}_{\ca}T(?,B))
}
\]
where $\iota^{\we}$ and $\pi^\che$ are quasi-isomorphisms. Therefore, $\lambda(B^\we)$ is a quasi-isomorphism for every $B\in\cb$ if and only if $\sigma(B^\che)$ is a quasi-isomorphism for every $B\in\cb$.
\end{proof}

Similarly, we have:

\begin{proposition} \label{prop.relation between unit-counit isomorphism bis}
Consider the following statements:
\begin{itemize}
\item[1)] $\rho(A^\che)$ is an isomorphism for each $A\in\ca$. 
\item[1')] $\rho(X)$ is an isomorphism for each $X\in\per(\ca^{\text{op}})$. 
\item[2)] $\phi(T(A,?))$ is an isomorphism for each $A\in\ca$. 
\item[2')] $\phi(N)$ is an isomorphism for each $N\in\thick_{\cd(\cb^\text{op})}(T(A,?)\ko A\in\ca)$. 
\item[3)] $\tau(A^\we)$ is an isomorphism for each $A\in\ca$. 
\item[3')] $\tau(M)$ is an isomorphism for each $M\in\per(\ca)$.
\item[4)] $\sigma (T(A,?))$ is an isomorphism for each $A\in\ca$.
\item[4')] $\sigma (N)$ is an isomorphism for each $N\in\thick_{\cd(\cb^\text{op})}(T(A,?)\ko A\in\ca)$. 
\end{itemize}
There is the following chain of implications:
\[
2)\Leftrightarrow 2')\Leftarrow 1')\Leftrightarrow 1)\Leftrightarrow 3)\Leftrightarrow 3')\Rightarrow 4')\Leftrightarrow 4).
\]
\end{proposition}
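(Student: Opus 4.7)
The plan is to reproduce, step by step, the argument of Proposition~\ref{prop.relation between unit-counit isomorphism} with the roles of left and right modules, and of $\ca$ and $\cb$, systematically interchanged. First, each equivalence $i) \Leftrightarrow i')$ for $i\in\{1,2,3,4\}$ follows from Corollary~\ref{cor:point-stable form a thick subcategory} (with $n=1$): each of $\rho$, $\phi$, $\tau$, $\sigma$ is a morphism of triangle functors, so the class of objects on which it restricts to an isomorphism is a thick subcategory. One then uses that $\per(\ca^\text{op}) = \thick_{\cd(\ca^\text{op})}(\{A^\che\}_{A\in\ca})$ and $\per\ca = \thick_{\cd\ca}(\{A^\we\}_{A\in\ca})$, while the target classes of $2)$ and $4)$ are thick by definition.

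For $1) \Rightarrow 2)$, Lemma~\ref{rem.relations between RHom and tensor}.3) identifies $T\otimes^\L_\ca A^\che \cong T(A,?)$ in $\cd(\cb^\text{op})$, so the hypothesis that $\rho(A^\che)$ is an isomorphism reads as an iso $A^\che \arr{\sim} \RHom_{\cb^\text{op}}(T,T(A,?))$ in $\cd(\ca^\text{op})$. Applying $T\otimes^\L_\ca ?$ and using the lemma again yields $T(A,?) \arr{\sim} T\otimes^\L_\ca \RHom_{\cb^\text{op}}(T, T(A,?))$, which a routine check shows to be inverse to $\phi(T(A,?))$. For $3) \Rightarrow 4)$ one uses instead $\RHom_\ca(A^\we,T)\cong T(A,?)$ from the same lemma: the iso $\tau(A^\we)$ becomes $A^\we \arr{\sim} \RHom_{\cb^\text{op}}(T(A,?),T)$, and applying $\RHom_\ca(?,T)$ delivers the inverse of $\sigma(T(A,?))$.

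The remaining equivalence $1) \Leftrightarrow 3)$ is obtained by mimicking the commutative square of the previous proposition. For $A,A'\in\ca$ one forms the diagram of complexes of $k$-modules
\[
\xymatrix{
\Hom_\ca(A,A')\ar[rr]^-{\rho(A^\che)(A')}\ar[d]_{\tau(A'^\we)(A)} && \Hom_{\cc_{dg}(\cb^\text{op})}(T(A',?),{\bf i}_{\cb^\text{op}}T(A,?))\ar[d]^{\pi^\che} \\
\Hom_{\cc_{dg}(\cb^\text{op})}({\bf p}_{\cb^\text{op}}T(A',?),T(A,?))\ar[rr]^-{\iota^\we} && \Hom_{\cc_{dg}(\cb^\text{op})}({\bf p}_{\cb^\text{op}}T(A',?),{\bf i}_{\cb^\text{op}}T(A,?))
}
\]
where $\pi$ and $\iota$ are the $\ch$-projective and $\ch$-injective resolutions of $T(A',?)$ and $T(A,?)$ respectively, so that $\pi^\che$ and $\iota^\we$ are quasi-isomorphisms (Remark~\ref{HprojHinjQsi}). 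Commutativity of the square then forces $\rho(A^\che)$ to be a quasi-isomorphism for every $A$ exactly when $\tau(A^\we)$ is, for every $A$.

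I do not expect any substantive obstacle: the argument is entirely parallel to the preceding proposition and the identifications are provided by Lemma~\ref{rem.relations between RHom and tensor}. The only bookkeeping requiring care is keeping track of which side of each adjunction is being used when one moves between $\ca$ and $\cb$, and checking that in the commutative square above the four arrows indeed match the instantiated units and counits, but these are straightforward unwindings of definitions.
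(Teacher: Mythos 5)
Your proposal is correct and is essentially the paper's intended argument: the paper states this proposition with no written proof beyond the word ``Similarly,'' deferring to the proof of Proposition~\ref{prop.relation between unit-counit isomorphism}, and your left--right mirrored version (thickness of the isomorphism locus via Corollary~\ref{cor:point-stable form a thick subcategory}, the identifications from Lemma~\ref{rem.relations between RHom and tensor} for $1)\Rightarrow 2)$ and $3)\Rightarrow 4)$, and the commutative square of quasi-isomorphisms for $1)\Leftrightarrow 3)$) is exactly that deferred proof, with the units and counits correctly instantiated.
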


\begin{definition}
\begin{itemize}
\item[1)] The objects of $\cd(\cb^\text{op})$ which are reflective (see Definition~\ref{def: (co)reflective}) with respect to the adjoint pair formed by $\RHom_{\cb^\text{op}}(?,T)$ and $\RHom_{\ca}(?,T)$ are said to be {\em homologically $T$-reflective}.
\item[2)] The objects of $\cd\ca$ which are coreflective (see Definition~\ref{def: (co)reflective}) with respect to the adjoint pair formed by $\RHom_{\cb^\text{op}}(?,T)$ and $\RHom_{\ca}(?,T)$ are said to be {\em homologically $T$-coreflective}. 
\end{itemize}
\end{definition}

Recall (see for example \cite[\S 4 of Chapter 1]{AF}) the following:

\begin{definition}
Let $A$ and $B$ be ordinary algebras and let $M$ be a $B$-$A$-bimodule. Consider the ring morphisms
\[\lambda: B\ra\End_{A}(M)\hspace{1cm}\text{ and }\hspace{1cm}\rho:A\ra\End_{B}(M),
\]
given by multiplying to the left and to the right by elements of $B$ and $A$, respectively. We say that $M$ is {\em faithfully balanced} if these morphisms are isomorphisms.
\end{definition}

\begin{examples}
\begin{itemize}
\item[1)] The regular $A$-$A$-bimodule $A$ is faithfully balanced.
\item[2)] If $k$ is a field and $M$ is a $k$-vector space, then $M$ is a faithfully balanced $\End_{k}(M)$-$k$-bimodule. (See Exercise 4.4 of Chapter 1 of \cite{AF}.)
\item[3)] If $M$ is a right $A$-module and $\op{BiEnd}(M_{A})$ is its ring of biendomorphism (see \cite{AF}), then $M$ is a faithfully balanced $\End(M_{A})$-$\op{BiEnd}(M_{A})$-bimodule.
\end{itemize}
\end{examples}

\begin{definition}
Let $\ca$ and $\cb$ be dg categories. A dg $\cb$-$\ca$ bimodule $T$ is {\em homologically faithfully balanced} if the units of the adjunctions
\[\lambda(B^\we): B^\we\arr{\sim}\RHom_{\ca}(T,T(?,B))\hspace{2cm}\rho(A^\che): A^\che\arr{\sim}\RHom_{\cb^\text{op}}(T,T(A,?)).
\]
are isomorphisms for each $B\in\cb$ and $A\in\ca$.
\end{definition}

A prototypical example of a homologically faithfully balanced bimodule is the following.

\begin{example} \label{good n-tilting}
Let $A$ and $B$ be ordinary algebras (they can be regarded as dg categories with only one object and such that the complex of endomorphisms is concentrated in degree $0$). An ordinary $B$-$A$-bimodule $T$ is homologically faithfully balanced if and only if it is faithfully balanced, and $\Ext_A^n(T,T)=0$ and $\Ext_B^n(T,T)=0$ for all $n>0$. In particular, if $T$ is a good $n$-tilting $A$-module (see Definition~\ref{tilting module}) and $B=\End_{A}(T)$, then $T$ is a homologically faithfully balanced $B$-$A$-bimodule \cite[Proposition 1.4]{BazzoniManteseTonolo}.
\end{example}

\begin{remark}
Propositions~\ref{prop.relation between unit-counit isomorphism} and \ref{prop.relation between unit-counit isomorphism bis} set a link between homologically $T$-(co)reflectivity and homologically faithfully balance of $T$. This is important for the proof of Theorem~\ref{special localization result}.
\end{remark}

\begin{notation}\label{notation: regular bimodule}
Given a small dg category $\cb$, the {\em regular bimodule} over $\cb$ is the dg $\cb$-$\cb$-bimodule 
\[
\cb^\text{op}\otimes\cb\ra\cc_{dg}k\ko (B,B')\mapsto\Hom_{\cb}(B,B').
\] 
Abusing of notation we shall denote it by $\cb(?,?)$ or simply by $\cb$.
\end{notation}

\begin{remark} \label{rem.Hom-tenso for regular bimodule}
It is a straightforward verification that we have isomophisms of covariant triangle functors
$\cd\cb\ra\cd\cb$
\[
\RHom_\cb(\cb,?)\cong \id_{\cd\cb}\cong ?\otimes_\cb^\L\cb.
\]
\end{remark}

\begin{definition}
We say that a dg category $\ca$ is {\em $k$-projective} if for each pair of objects $A'\ko A\in\ca$ the complex $\Hom_{\ca}(A',A)$ is $\ch$-projective in $\ch k$. We say that $\ca$ is {\em $k$-flat} if for each pair of objects $A'\ko A\in\ca$ tensoring with the complex $\Hom_{\ca}(A',A)$,
\[?\otimes_{k}\Hom_{\ca}(A',A): \ch k\ra \ch k,
\]
preserves acyclic complexes.
\end{definition}

\begin{lemma}\label{preserving H-projectivity}
Let $\ca$ and $\cb$ be a small dg categories. 
\begin{itemize}
\item[1)] Assume $\ca$ is $k$-projective. Then for each $A\in\ca$ the `restriction of scalars functor' 
\[
\ch(\ca^\text{op}\otimes\cb)\ra\ch\cb\ko M\mapsto M(?,A)
\]
preserves the property of being $\ch$-projective.
\item[2)] Assume $\ca$ is \em $k$-flat. Then for each $A\in\ca$ the `restriction of scalars functor' 
\[\ch(\ca^\text{op}\otimes\cb)\ra\ch\cb\ko M\mapsto M(?,A)
\]
preserves the property of being $\ch$-injective.
\end{itemize}
\end{lemma}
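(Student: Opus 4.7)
The approach is to realise the restriction functor $i_A\colon M\mapsto M(?,A)$ as restriction of scalars along the dg functor $\iota_A\colon \cb \to \ca^{\text{op}}\otimes\cb$, $B\mapsto (A,B)$, and to exploit the fact that such a restriction sits as the middle of an adjoint triple $L\dashv i_A\dashv R$. A short computation with tensor products, internal Homs and the Yoneda lemma yields the explicit formulas, for a right dg $\cb$-module $N'$ and a pair $(A',B)\in \ca^{\text{op}}\otimes\cb$,
\[
L(N')(A',B)\cong \Hom_{\ca}(A,A')\otimes_{k} N'(B), \qquad R(N')(A',B)\cong \Hom_{\cc_{dg}k}\bigl(\Hom_{\ca}(A',A),\, N'(B)\bigr).
\]
Both adjunctions live already at the level of the dg categories of modules and descend to adjunctions of triangle functors between the homotopy categories $\ch(\ca^{\text{op}}\otimes\cb)$ and $\ch\cb$.

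The guiding principle I would then invoke is purely formal: for an adjoint pair of triangle functors, if the left adjoint preserves acyclic objects, then the right member preserves $\ch$-injective objects; dually, if the right adjoint preserves acyclic objects, then the left member preserves $\ch$-projective objects. Both statements follow immediately from the adjunction isomorphism in the homotopy category, together with the defining vanishing properties of $\ch$-projective and $\ch$-injective modules (the target $N'$ being acyclic, one applies the adjunction to reduce the vanishing to a Hom whose other side is the image of $N'$ under the adjoint).

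The proof is thus reduced to checking that $R$ preserves acyclicity when $\ca$ is $k$-projective, and that $L$ preserves acyclicity when $\ca$ is $k$-flat. Both verifications use that acyclicity of a dg module over $\cb$ or over $\ca^{\text{op}}\otimes\cb$ is detected pointwise at every object, so it suffices to inspect the formulas above at a fixed $(A',B)$. For part~(1), $k$-projectivity says $\Hom_{\ca}(A',A)$ is $\ch$-projective in $\ch k$, and the Hom-complex from an $\ch$-projective object to an acyclic object is itself acyclic (apply the defining property of $\ch$-projectivity to every shift of $N'(B)$). For part~(2), $k$-flatness says exactly that tensoring with $\Hom_{\ca}(A,A')$ preserves acyclicity of $k$-complexes. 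The only genuine technical step is establishing the adjoint triple with the formulas above; once that is done, the verification of acyclicity-preservation is a one-line consequence of the relevant hypothesis on $\ca$.
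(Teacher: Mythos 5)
Your proof is correct and follows essentially the same route as the paper: the paper also identifies the restriction functor with $A^\we\otimes_\ca ?$ (resp.\ $\Hom_{\cc_{dg}(\ca^\text{op})}(A^\che,?)$), exhibits the right adjoint $\Hom_{\cc_{dg}k}(A^\we,?)$ (resp.\ left adjoint $A^\che\otimes_k ?$) with exactly your pointwise formulas, and concludes by checking that the adjoint preserves acyclics using $k$-projectivity (resp.\ $k$-flatness). The only cosmetic difference is that you phrase the setup via restriction along the dg functor $B\mapsto(A,B)$ and its adjoint triple, which changes nothing of substance.
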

\begin{proof}
1) The `restriction of scalars functor' is isomorphic to
\[
A^\we\otimes_{\ca}?:\ch(\ca^\text{op}\otimes\cb)\ra\ch\cb,
\]
and it has a right adjoint:
\[
\Hom_{\cc_{dg}k}(A^\we,?):\ch\cb\ra\ch(\ca^\text{op}\otimes\cb).
\]
Let $P$ be an $\ch$-projective module in $\ch(\ca^\text{op}\otimes\cb)$ and let $N$ be an acyclic module in $\ch\cb$. Then
\[\Hom_{\ch\cb}(A^\we\otimes_{\ca}P,N)\cong\Hom_{\ch(\ca^\text{op}\otimes\cb)}(P,\Hom_{\cc_{dg}k}(A^\we,N)).
\]
Therefore, to prove $\Hom_{\ch\cb}(A^\we\otimes_{\ca}P,N)=0$ it suffices to prove that $\Hom_{\cc_{dg}k}(A^\we,N)$ is acyclic. But for each pair of objects $A'\in\ca\ko B\in\cb$ and integer $p\in\Z$ the complex $\Hom_{\cc_{dg}k}(A^\we,N)(B,A')=\Hom_{\cc_{dg}k}(\Hom_{\ca}(A',A),N(B))$ has the following $p$th homology $k$-module
\[H^{p}\Hom_{\cc_{dg}k}(\Hom_{\ca}(A',A),N(B))=\Hom_{\ch k}(\Hom_{\ca}(A',A),\Sigma^{p}N(B))=0.
\]

2) The restriction of scalars functor is isomorphic to
\[
\Hom_{\cc_{dg}(\ca^\text{op})}(A^\che,?):\ch(\ca^\text{op}\otimes\cb)\ra\ch\cb,
\]
which has a left adjoint
\[
A^\che\otimes_{k}?:\ch\cb\ra\ch(\ca^\text{op}\otimes\cb)
\]
Let $I$ be an $\ch$-injective module in $\ch(\ca^\text{op}\otimes\cb)$ and let $N$ be an acyclic module in $\ch\cb$. Then
\[\Hom_{\ch\cb}(N,\Hom_{\cc_{dg}(\ca^\text{op})}(A^\che,I))\cong\Hom_{\ch(\ca^\text{op}\otimes\cb)}(A^\che\otimes_{k}N,I)
\]
Therefore, to prove $\Hom_{\ch\cb}(N,\Hom_{\cc_{dg}(\ca^\text{op})}(A^\che,I))=0$ it suffices to prove that $A^\che\otimes_{k}N$ is an acyclic dg $\ca$-$\cb$-bimodule. But for each $A'\in\ca$ and $B\in\cb$ the complex $(A^\che\otimes_{k}N)(B,A')=A^\che(A')\otimes_{k}N(B)=\Hom_{\ca}(A,A')\otimes_{k}N(B)$ is still acyclic because $N(B)$ is acyclic and $\ca$ is $k$-flat.
\end{proof}

\begin{remark}\label{k-projective implies k-flat}
After Lemma~\ref{preserving acyclics under tensors} below, we know that if a dg category is $k$-projective then it is $k$-flat.
\end{remark}

\begin{lemma}\label{preserving acyclics under tensors}
Let $\cb$ be a small dg category. If $X$ is the colimit in $\cc\cb$ of a sequence
\[0=X_{-1}\ra X_{0}\ra X_{1}\ra\dots\ra X_{n-1}\ra X_{n}\ra\dots
\]
where each arrow $X_{n-1}\ra X_{n}$ is an inflation and each factor $X_{n}/X_{n-1}$ is a small coproduct of shifts of copies of $B^\we\ko B\in\cb$, then the functor
\[X\otimes_{B}?:\cc(\cb^\text{op})\ra\cc k
\]
preserves quasi-isomorphisms.
\end{lemma}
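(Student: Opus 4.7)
Since $X\otimes_\cb ?$ is a dg functor (hence sends mapping cones to mapping cones), preserving quasi-isomorphisms is equivalent to sending acyclic modules to acyclic complexes. So the task reduces to showing that $X\otimes_\cb N$ is acyclic in $\cc k$ whenever $N\in \cc(\cb^{\text{op}})$ is acyclic (i.e. each $N(B)$ is an acyclic complex of $k$-modules).

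First handle the representable case. By Remark~\ref{tensoring with free modules}, $B^\wedge\otimes_\cb N\cong N(B)$, which is acyclic. Since $?\otimes_\cb N$ preserves shifts and small coproducts, any small coproduct of shifts of representables, tensored with $N$, yields an acyclic complex (coproducts of acyclic complexes of $k$-modules are acyclic). In particular each $(X_n/X_{n-1})\otimes_\cb N$ is acyclic.

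Next, I prove by induction on $n$ that each $X_n\otimes_\cb N$ is acyclic. The base case $n=0$ is covered above since $X_0=X_0/X_{-1}$. For the inductive step, the inflation $X_{n-1}\ra X_n$ is, by definition, a section in the underlying graded category $\cg\cb$, so the short exact sequence
\[
0\ra X_{n-1}\ra X_n\ra X_n/X_{n-1}\ra 0
\]
is split in $\cg\cb$. Applying $?\otimes_\cb N$ (which preserves split short exact sequences of graded modules) therefore yields a short exact sequence of complexes of $k$-modules
\[
0\ra X_{n-1}\otimes_\cb N\ra X_n\otimes_\cb N\ra (X_n/X_{n-1})\otimes_\cb N\ra 0.
\]
The associated long exact sequence in homology, together with acyclicity of the two outer terms (the left by induction, the right by the previous paragraph), forces acyclicity of $X_n\otimes_\cb N$.

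Finally, since tensor product commutes with colimits, we have $X\otimes_\cb N\cong\varinjlim_n (X_n\otimes_\cb N)$, a filtered colimit of acyclic complexes of $k$-modules along monomorphisms. Filtered colimits in $\Mod k$ are exact and commute with homology termwise, so this colimit is acyclic. The only subtle step is the one going from the conflation $X_{n-1}\ra X_n\ra X_n/X_{n-1}$ in $\cc\cb$ to a short exact sequence of $k$-complexes after tensoring with $N$; this is where the fact that conflations split in $\cg\cb$ (i.e., are genuine inflations in the sense of \cite{KellerDDC}), rather than merely degreewise split, is essential.
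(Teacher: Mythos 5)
Your proof is correct and follows essentially the same route as the paper's: reduce to preservation of acyclics, induct along the filtration using that each conflation stays exact after tensoring (since it splits in the graded category), and then pass to the colimit. The only difference is the last step, where the paper commutes $H^m$ with the colimit via $\Hom_{\cd k}(\Sigma^{-m}k,?)$ and a cited lemma on such sequential colimits, whereas you invoke directly the exactness of filtered colimits in $\Mod k$ — an equally valid and slightly more elementary justification.
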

\begin{proof}
It is equivalent to check that $X\otimes_{\cb}?$ preserves acyclic objects. If $Y\in\cc(\cb^\text{op})$ is acyclic, then 
\begin{align}
H^m(X\otimes_{\cb}Y)\cong \nonumber \\
\Hom_{\cd k}(\Sigma^{-m}k, (\op{colim}_{n}X_{n})\otimes_{\cb}Y)\cong \nonumber \\
\Hom_{\cd k}(\Sigma^{-m}k, \op{colim}_{n}(X_{n}\otimes_{\cb}Y))\cong \nonumber \\
\cong\op{colim}_{n}\Hom_{\cd k}(\Sigma^{-m}k, X_{n}\otimes_{\cb}Y)\cong \nonumber \\
\cong\op{colim}_{n}H^m(X_{n}\otimes_{\cb}Y). \nonumber
\end{align}
The reader can find a proof of the third isomorphism in Lemma 6.3 of \cite{NicolasSaorinParametrizing}. Now it suffices to prove that each $X_{n}\otimes_{\cb}Y$ is acyclic. This is done by induction on $n$. If $n=0$, then $X_{0}\otimes_{\cb}Y$ is isomorphic to a small coproduct of shifts of dg $k$-modules of the form $Y(B)$ with $B\in\cb$, and so it is acyclic. If $n>0$, we can consider a conflation
\[X_{n-1}\ra X_{n}\ra X_{n}/X_{n-1}
\]
in $\cc(\cb^\text{op})$. This induces a conflation
\[X_{n-1}\otimes_{\cb}Y\ra X_{n}\otimes_{\cb}Y\ra (X_{n}/X_{n-1})\otimes_{\cb}Y
\]
in $\cc k$, and so a triangle
\[X_{n-1}\otimes_{\cb}Y\ra X_{n}\otimes_{\cb}Y\ra (X_{n}/X_{n-1})\otimes_{\cb}Y\ra \Sigma X_{n-1}\otimes_{\cb}Y
\]
in $\cd k$. By hypothesis of induction both $X_{n-1}\otimes_{\cb}Y$ and $(X_{n}/X_{n-1})\otimes_{\cb}Y$ are acyclic, and then so is $X_{n}\otimes_{\cb}Y$.
\end{proof}

\begin{notation}
Let $\ca\ko \cb$ and $\cc$ be small dg categories. If $T$ is a dg $\cb$-$\ca$-bimodule and $X$ is a dg $\cb$-$\cc$-bimodule, then we will commit an abuse of notation by writting $\Hom_{\cc_{dg}(\cb^\text{op})}(T,X)$ to refer to the dg $\ca$-$\cc$-bimodule defined by
\[
\cc^\text{op}\otimes\ca\ra\cc_{dg}k\ko (C,A)\mapsto\Hom_{\cc_{dg}(\cb^\text{op})}(T(A,?),X(C,?)).
\]
\end{notation}

\begin{lemma} \label{lemma:contravariant Hom and tensor}
Let $\ca\ko \cb$ and $\cc$ be small dg categories. 
\begin{itemize}
\item[1)] There exists a natural transformation between bifunctors from
$\cc_{dg}(\cb^\text{op}\otimes\ca)^\text{op}\times\cc_{dg}(\cb^\text{op}\otimes\cc)$ to $\cc_{dg}(\ca^\text{op}\otimes\cc)$,
which are dg functors on both variables,
\[
\psi: \Hom_{\cc_{dg}(\cb^\text{op})}(?,\cb)\otimes_\cb ? \ra \Hom_{\cc_{dg}(\cb^\text{op})}(?,?).
\]
\item[2)] There exists a natural transformation between bifunctors from
$\cc_{dg}(\ca^\text{op}\otimes\cb)\times\cc_{dg}(\cc^\text{op}\otimes\cb)^\text{op}$ to $\cc_{dg}(\ca^\text{op}\otimes\cc)$,
which are dg functors on both variables,
\[
\psi: ?\otimes_\cb\Hom_{\cc_{dg}\cb}(?,\cb) \ra \Hom_{\cc_{dg}\cb}(?,?).
\]
\end{itemize}
\end{lemma}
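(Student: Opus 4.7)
The construction of $\psi$ in both parts is the standard tensor--hom evaluation map familiar from module theory, transferred to the dg setting. The only task is to write the formula at the level of elements and then verify its compatibility with the tensor--product relations over $\cb$, with the Leibniz differentials on Hom and tensor, with the ambient bimodule structures, and with contravariant/covariant naturality in both arguments.

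For part 1, fix $T\in\cc_{dg}(\cb^\text{op}\otimes\ca)$ and $X\in\cc_{dg}(\cb^\text{op}\otimes\cc)$, and let $(C,A)\in\cc^\text{op}\otimes\ca$. A simple tensor in $(\Hom_{\cc_{dg}(\cb^\text{op})}(T,\cb)\otimes_\cb X)(C,A)$ has the form $\phi\otimes x$ with $\phi\in\Hom_{\cc_{dg}(\cb^\text{op})}(T(A,?),\cb(B,?))$ and $x\in X(C,B)$ for some $B\in\cb$. I would define
\[\psi_{T,X}(C,A)(\phi\otimes x):T(A,?)\ra X(C,?)\]
to be the morphism in $\cc_{dg}(\cb^\text{op})$ sending $t\in T(A,B')$ to $X(C,\phi_{B'}(t))(x)\in X(C,B')$, where $\phi_{B'}(t)\in\cb(B,B')=\Hom_\cb(B,B')$ acts on $x$ via the covariant functoriality of $X(C,?):\cb\ra\cc_{dg}k$.

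The verification that $\psi_{T,X}(C,A)$ descends through the tensor product over $\cb$ is a direct consequence of the associativity of the $\cb$-action on $X$; that its image is a $\cb^\text{op}$-linear dg map $T(A,?)\ra X(C,?)$ follows from the naturality of $\phi$ together with the functoriality of $X$. Compatibility of $\psi$ with the Hom and tensor differentials is the Leibniz rule, and naturality in $T$ (contravariantly) and in $X$ (covariantly), together with $\ca$-$\cc$-bilinearity of the resulting map of bimodules, reduce to associativity of composition and of the tensor product.

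Part 2 is handled by a formally analogous construction with the roles of the left and right $\cb$-actions swapped: for $\phi\in\Hom_{\cc_{dg}\cb}(?,\cb)$, the map sends $x\otimes\phi$ to $t\mapsto\phi_{B'}(t)\cdot x$, now using the right $\cb$-action on $x$, and one verifies the corresponding compatibilities by the same formal pattern. I expect the only real obstacle to be purely notational: keeping the variance conventions for dg bimodules straight and correctly pairing the four functorial slots of $T$, $X$, the regular $\cb$-bimodule, and the target, with no substantive mathematical content beyond the dg refinement of the classical tensor--hom map.
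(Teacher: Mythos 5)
Your map is the same tensor--hom evaluation map that the paper constructs, so the approach is essentially the paper's; the one difference in packaging is that the paper does not define $\psi_{T,X}$ directly on simple tensors and then check that it descends through the balancing relation over $\cb$. Instead it uses the adjunction between $?\otimes_\cb X$ and $\Hom_{\cc_{dg}\cc}(X,?)$ and writes down the adjoint morphism $\xi_{T,X}:\Hom_{\cc_{dg}(\cb^\text{op})}(T,\cb)\ra\Hom_{\cc_{dg}\cc}(X,\Hom_{\cc_{dg}(\cb^\text{op})}(T,X))$, recovering $\psi_{T,X}$ as its preimage under the adjunction isomorphism; this buys you the well-definedness on the tensor product for free, at the cost of one more layer of evaluation. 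Your direct route is equally legitimate.

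The one substantive slip is the sign. Your formula sends $t\in T(A,B')$ to $X(C,\phi_{B'}(t))(x)$, whereas the correct dg formula is $t\mapsto(-1)^{|x|\cdot|t|}X(C,\phi_{B'}(t))(x)$ (and correspondingly in part 2). Without the Koszul sign coming from moving the homogeneous element $x$ past $t$, the assignment $\phi\otimes x\mapsto\bigl(t\mapsto X(C,\phi_{B'}(t))(x)\bigr)$ is not compatible with the differentials: the Leibniz check you announce in the next sentence would fail on homogeneous elements of odd degree. So the claim that the dg refinement has ``no substantive mathematical content beyond'' the classical map is slightly too optimistic; the sign is exactly the content, and it needs to appear in the formula for the rest of your verification to go through.
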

\begin{proof}
1) Consider a dg $\cb$-$\ca$-bimodule $T$ and a dg $\cb$-$\cc$-bimodule $X$. By definition of the dg $\ca$-$\cc$-bimodule $\Hom_{\cc_{dg}(\cb^\text{op})}(T,X):\cc^\text{op}\otimes\ca\longrightarrow\cc_{dg}k$, we have that $\Hom_{\cc_{dg}(\cb^\text{op})}(T,X)(C,A)=\Hom_{\cc_{dg}(\cb^\text{op})}(T(A,?),X(C,?))$, for all objects $C\in\cc$ and $A\in\ca$. Similarly, by definition of the tensor product of dg bimodules, the dg $\ca$-$\cc$-bimodule $\Hom_{\cc_{dg}(\cb^\text{op})}(T,\cb)\otimes_\cb X:\cc^\text{op}\otimes\ca\longrightarrow\cc_{dg}k$ takes $(C,A)\ra\Hom_{\cc_{dg}(\cb^\text{op})}(T(A,?),\cb)\otimes_\cb X(C,?)$. We then need to define a morphism of dg $k$-modules 
\[
\psi_{T,X}:\Hom_{\cc_{dg}(\cb^\text{op})}(T(A,?),\cb)\otimes_\cb X(C,?)\longrightarrow\Hom_{\cc_{dg}(\cb^\text{op})}(T(A,?),X(C,?)),
\]
for all $C$ and $A$ as above. For this we need in turn to define, for each object $B\in\cb$, a morphism (of zero degree) of dg $k$-modules 
\[
\psi_{T,X}(B):\Hom_{\cc_{dg}(\cb^\text{op})}(T(A,?),\cb(B,?))\otimes X(C,B)\longrightarrow\Hom_{\cc_{dg}(k)}(T(A,B),X(C,B)).
\] 
Indeed if $f:T(A,?)\longrightarrow\cb(B,?)$ is a graded morphism (of some degree) and $t\in T(A,B)$ and $x\in X(C,B)$ are homogeneous elements, we define $\psi_{T,X}(f\otimes x)(t)=(-1)^{|t| |x|}f_B(t)x$, where $f_B:T(A,B)\longrightarrow\cb(B,B)$ is the evaluation of $f$ at $B$. We leave to the reader the task of checking that, when $B$ varies on the objects of $\cb$, we have a well-defined morphism 
\[
\psi_{T,X}(C,A):\Hom_{\cc_{dg}(\cb^\text{op})}(T(A,?),\cb)\otimes_\cb X(C,?)\longrightarrow\Hom_{\cc_{dg}(\cb^\text{op})}(T(A,?),X(C,?))
\]
in $\cc_{dg}k$,  and then, when  $C$ and $A$ vary on the  objects of  $\cc$ and $\ca$, respectively,   we get a morphism 
\[
\psi_{T,X}:\Hom_{\cc_{dg}(\cb^\text{op})}(T,\cb)\otimes_\cb X\longrightarrow\Hom_{\cc_{dg}(\cb^\text{op})}(T,\cb)
\] 
in $\cc_{dg}(\ca^\text{op}\otimes\cc)$ which is easily seen to be natural on $T$ and $X$.

2) Proved similarly.
\end{proof}

\begin{notation}\label{one-functors}
Let $\ca\ko \cb$ and $\cc$ be dg $k$-categories. Let $Y$ be a dg $\cb$-$\ca$-bimodule. Assume that $\cc$ is $k$-projective. Consider the following functors. 
\begin{itemize}
\item The functor
\[
?\otimes^\L_{\cb}Y:\cd(\cc^\text{op}\otimes\cb)\ra\cd(\cc^\text{op}\otimes\ca)
\]
is defined as follows:
\[
(X\otimes^\L_{\cb}Y)(A,C)=({\bf p}_{\cc^\text{op}\otimes\cb}X)(?,C)\otimes_{\cb}Y(A,?).
\]
\item The functor
\[
\RHom_{\cb^\text{op}}(Y,?):\cd(\cb^\text{op}\otimes\cc)\ra\cd(\ca^\text{op}\otimes\cc)
\]
is defined as follows:
\[
\RHom_{\cb^\text{op}}(Y,Z)(A,C)=\Hom_{\cc_{dg}(\cb^\text{op})}(Y(A,?),({\bf i}_{\cb^\text{op}\otimes\cc}Z)(C,?)).
\]
\item The functor
\[
\RHom_{\cb^\text{op}}(?,Y):\cd(\cb^\text{op}\otimes\cc)^\text{op}\ra\cd(\cc^\text{op}\otimes\ca)
\]
is defined as follows:
\[
\RHom_{\cb^\text{op}}(Z,Y)(C,A)=\Hom_{\cc_{dg}(\cb^\text{op})}(({\bf p}_{\cb^\text{op}\otimes\cc}Z)(C,?),Y(A,?)).
\]
\end{itemize}
\end{notation}

\begin{remark}\label{coherence}
When $\cc=k$ these definitions agree with the usual ones thanks to Lemma~\ref{preserving H-projectivity} and Remark~\ref{k-projective implies k-flat}. Note also that:
\begin{itemize}
\item For each dg $\cc$-$\cb$-bimodule $X$ we have an isomorphism
\[
\left[(?\otimes^\L_{\cb}Y)X\right](?,C)\cong X(?,C)\otimes^\L_{\cb}Y
\]
in $\cd\ca$.
\item For each dg $\cb$-$\cc$-bimodule $Z$ we have 
\begin{itemize}
\item an isomorphism $\left[\RHom_{\cb^\text{op}}(?,Y)Z\right](?,C)\cong\RHom_{\cb^\text{op}}(Z(C,?),Y)$ in $\cd\ca$.
\item an isomorphism $\left[\RHom_{\cb^\text{op}}(Y,?)Z\right](C,?)\cong\RHom_{\cb^\text{op}}(Y,Z(C,?))$ in $\cd(\ca^\text{op})$.
\end{itemize}
\end{itemize}

\end{remark}

\begin{remark}
There are left-right symmetric versions of the above functors (\eg $Y\otimes^\L_{\ca}?$) whose definition is left to the reader. 
\end{remark}

\begin{notation}\label{two-functors}
Let $\ca\ko \cb$ and $\cc$ be dg $k$-categories. Assume both $\ca$ and $\cc$ are $k$-projective. We can define a triangulated $2$-functor
\[
?\otimes^\L_{\cb}?: \cd(\cc^\text{op}\otimes\cb)\times\cd(\cb^\text{op}\otimes\ca)\ra\cd(\cc^\text{op}\otimes\ca)
\]
by
\[
(X,Y)\mapsto X\otimes^\L_{\cb}Y={\bf p}_{\cc^\text{op}\otimes\cb}X\otimes_{\cb}{\bf p}_{\cb^\text{op}\otimes\ca}Y.
\]
We can also define another triangle $2$-functor
\[
\RHom_{\cb^\text{op}}(?,?):\cd(\cb^\text{op}\otimes\cc)^\text{op}\times\cd(\cb^\text{op}\otimes\ca)\ra\cd(\cc^\text{op}\otimes\ca)
\]
by doing
\[
\RHom_{\cb^\text{op}}(Z,Y)=\Hom_{\cc_{dg}(\cb^\text{op})}({\bf p}_{\cb^\text{op}\otimes\cc}Z,{\bf i}_{\cb^\text{op}\otimes\ca}Y).
\]
\end{notation}

The following result shows that Notation~\ref{two-functors} is coherent with Notation~\ref{one-functors}.

\begin{lemma}\label{defining triangulated 2-functors}
Let $\ca\ko \cb$ and $\cc$ be dg $k$-categories. Assume both $\ca$ and $\cc$ are $k$-projective. Then:
\begin{itemize}
\item[1)] Let $X$ be a dg $\cc$-$\cb$-bimodule and $Y$ a $\cb$-$\ca$-bimodule. We have an isomorphism
\[
(?\otimes^\L_{\cb}Y)X\cong X\otimes^\L_{\cb}Y\cong (X\otimes^\L_{\cb}?)Y
\]
in $\cd(\cc^\text{op}\otimes\ca)$.
\item[2)] Let $Z$ be a dg $\cb$-$\cc$-bimodule and $Y$ a $\cb$-$\ca$-bimodule. We have an isomorphism
\[
\RHom_{\cb^\text{op}}(?,Y)Z\cong \RHom_{\cb^\text{op}}(Z,Y)\cong \RHom_{\cb^\text{op}}(Z,?)Y
\]
in $\cd(\cc^\text{op}\otimes\ca)$
\end{itemize}
\end{lemma}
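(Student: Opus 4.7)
The plan is to verify that the doubly-resolved expressions in Notation~\ref{two-functors} agree in the derived category with the singly-resolved ones from Notation~\ref{one-functors}. In both parts, the core observation is that restricting a bimodule-resolution to a fixed object in $\cc$ (resp.\ $\ca$) produces an $\ch$-projective (resp.\ $\ch$-injective) object over the middle category $\cb$, after which the ordinary $\otimes_\cb$ (resp.\ $\Hom_{\cc_{dg}(\cb^\text{op})}$) functors preserve quasi-isomorphisms. The main technical nuisance is keeping track of which variable of each bimodule is being restricted so as to apply the correct instance of Lemma~\ref{preserving H-projectivity} under the correct $k$-projectivity hypothesis.

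For part (1), I would consider the diagram of dg $\cc$-$\ca$-bimodules
\[
({\bf p}_{\cc^\text{op}\otimes\cb}X)\otimes_\cb Y \longleftarrow {\bf p}_{\cc^\text{op}\otimes\cb}X\otimes_\cb {\bf p}_{\cb^\text{op}\otimes\ca}Y \longrightarrow X\otimes_\cb {\bf p}_{\cb^\text{op}\otimes\ca}Y,
\]
where the arrows are induced by the quasi-isomorphisms ${\bf p}_{\cb^\text{op}\otimes\ca}Y\to Y$ and ${\bf p}_{\cc^\text{op}\otimes\cb}X\to X$. Evaluating at a pair $(A,C)$, I need both arrows to be quasi-isomorphisms in $\cd k$. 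By Lemma~\ref{preserving H-projectivity}(1), applied with the hypothesis that $\cc$ is $k$-projective, the restriction $({\bf p}_{\cc^\text{op}\otimes\cb}X)(?,C)$ is $\ch$-projective as a right dg $\cb$-module; by the Keller description of $\ch$-projective modules recalled in \S\ref{Unbounded resolutions}, it is isomorphic in $\ch\cb$ to a module of the colimit-filtration type of Lemma~\ref{preserving acyclics under tensors}, so tensoring with it on the left preserves acyclics, giving the left quasi-isomorphism. A symmetric argument using that $\ca$ is $k$-projective (hence $k$-flat by Remark~\ref{k-projective implies k-flat}) handles the right arrow.

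For part (2), the comparison diagram is
\[
\Hom_{\cc_{dg}(\cb^\text{op})}({\bf p}_{\cb^\text{op}\otimes\cc}Z,Y) \longleftarrow \Hom_{\cc_{dg}(\cb^\text{op})}({\bf p}_{\cb^\text{op}\otimes\cc}Z,{\bf i}_{\cb^\text{op}\otimes\ca}Y) \longrightarrow \Hom_{\cc_{dg}(\cb^\text{op})}(Z,{\bf i}_{\cb^\text{op}\otimes\ca}Y),
\]
induced by $Y\to {\bf i}_{\cb^\text{op}\otimes\ca}Y$ and ${\bf p}_{\cb^\text{op}\otimes\cc}Z\to Z$. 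The pointwise restriction $({\bf p}_{\cb^\text{op}\otimes\cc}Z)(C,?)$ is $\ch$-projective as a left dg $\cb$-module by Lemma~\ref{preserving H-projectivity}(1) (using $\cc$ $k$-projective), so $\Hom_{\cc_{dg}(\cb^\text{op})}({\bf p}_{\cb^\text{op}\otimes\cc}Z,?)$ preserves quasi-isomorphisms between dg $\cb$-$\ca$-bimodules, giving the left arrow. Dually, $({\bf i}_{\cb^\text{op}\otimes\ca}Y)(A,?)$ is $\ch$-injective as a left dg $\cb$-module by Lemma~\ref{preserving H-projectivity}(2) (using $\ca$ is $k$-flat), so $\Hom_{\cc_{dg}(\cb^\text{op})}(?,{\bf i}_{\cb^\text{op}\otimes\ca}Y)$ preserves quasi-isomorphisms, yielding the right arrow. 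The only real obstacle is this bookkeeping; once it is sorted out, Lemmas~\ref{preserving H-projectivity} and \ref{preserving acyclics under tensors}, together with the Keller presentation of $\ch$-projective dg modules, conclude the proof.
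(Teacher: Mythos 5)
The paper states this lemma without proof, so there is nothing to compare against; your argument is correct and is clearly the intended one, since it uses exactly the tools the paper sets up for this purpose (Lemma~\ref{preserving H-projectivity}, Remark~\ref{k-projective implies k-flat}, Lemma~\ref{preserving acyclics under tensors} together with Keller's filtration description of $\ch$-projectives) and the hypotheses ``$\ca$ and $\cc$ $k$-projective'' enter precisely where you place them. The only slip is cosmetic: in your part (2) roof both arrows should point \emph{towards} the middle term, since $Y\to{\bf i}_{\cb^\text{op}\otimes\ca}Y$ induces $\Hom_{\cc_{dg}(\cb^\text{op})}({\bf p}Z,Y)\to\Hom_{\cc_{dg}(\cb^\text{op})}({\bf p}Z,{\bf i}Y)$ and ${\bf p}Z\to Z$ induces $\Hom_{\cc_{dg}(\cb^\text{op})}(Z,{\bf i}Y)\to\Hom_{\cc_{dg}(\cb^\text{op})}({\bf p}Z,{\bf i}Y)$; as both are quasi-isomorphisms this does not affect the conclusion.
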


\begin{proposition}\label{relation among dualities}
Let $\ca\ko \cb$ and $\cc$ dg categories. Assume $\cc$ is $k$-projective. Let $Y$ be a dg $\cb$-$\ca$-bimodule. Consider the triangle functors $F$ and $G$ from $\cd(\cb^\text{op}\otimes\cc)^\text{op}$ to $\cd(\cc^\text{op}\otimes\ca)$ given by:
\[
\xymatrix{
F: \cd(\cb^\text{op}\otimes\cc)^\text{op}\ar[rr]^{\hspace{0.5cm}\RHom_{\cb^\text{op}}(?,\cb)} && \cd(\cc^\text{op}\otimes\cb)\ar[rr]^{?\otimes^\L_{\cb}Y} && \cd(\cc^\text{op}\otimes\ca)
}
\]
and
\[
G=\RHom_{\cb^\text{op}}(?,Y).
\]
The following assertions hold:
\begin{itemize}
\item[1)] There is a morphism of functors $\theta_{Y}:F\ra G$ such that $\theta_{Y}(X)$ is an isomorphism whenever $X$ is a dg $\cb$-$\cc$-bimodule such that $X(C,?)\in\per(\cb^\text{op})$ for all $C\in\cc$.
\item[2)] If also $\ca$ is $k$-projective, then the maps $\theta(X,Y)=\theta_{Y}(X)$ define a morphism of triangle $2$-functors
\[
\theta: \RHom_{\cb^\text{op}}(?,\cb)\otimes^\L_{\cb}?\ra\RHom_{\cb^\text{op}}(?,?).
\]
\end{itemize}
\end{proposition}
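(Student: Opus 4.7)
For Part~1), the idea is to descend to the derived category the chain-level natural transformation $\psi$ of Lemma~\ref{lemma:contravariant Hom and tensor}(1), applied with its $(\ca,\cc)$ in the roles of our $(\cc,\ca)$, so that the first Hom-variable receives $X$ and the second receives $\cb$ or $Y$. Given $X\in\cd(\cb^\text{op}\otimes\cc)$, choose its $\ch$-projective resolution $\mathbf{p}X:=\mathbf{p}_{\cb^\text{op}\otimes\cc}X$. Because $\cc$ is $k$-projective, Lemma~\ref{preserving H-projectivity}(1) guarantees that $(\mathbf{p}X)(C,?)$ is $\ch$-projective over $\cb^\text{op}$ for every $C\in\cc$, so the chain-level functor $\Hom_{\cc_{dg}(\cb^\text{op})}(\mathbf{p}X,?)$ already represents $\RHom_{\cb^\text{op}}(X,?)$. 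Evaluating $\psi$ at $(\mathbf{p}X,Y)$ yields a chain-level morphism
\[\Hom_{\cc_{dg}(\cb^\text{op})}(\mathbf{p}X,\cb)\otimes_\cb Y\ra\Hom_{\cc_{dg}(\cb^\text{op})}(\mathbf{p}X,Y)\]
in $\cc_{dg}(\cc^\text{op}\otimes\ca)$, and pre-composing with the canonical comparison $F(X)\ra\RHom_{\cb^\text{op}}(X,\cb)\otimes_\cb Y$ coming from the $\ch$-projective resolution of $\RHom_{\cb^\text{op}}(X,\cb)$ used to derive $?\otimes_\cb Y$ produces $\theta_Y(X):F(X)\ra G(X)$ in $\cd(\cc^\text{op}\otimes\ca)$. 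Independence of the choice of $\mathbf{p}X$ is immediate from naturality of $\psi$ in its first variable and homotopy-uniqueness of $\ch$-projective resolutions.

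For the isomorphism claim, Corollary~\ref{cor:point-stable form a thick subcategory} asserts that the class of $X$ on which $\theta_Y(X)$ is an isomorphism is a thick subcategory of $\cd(\cb^\text{op}\otimes\cc)$, and a morphism in $\cd(\cc^\text{op}\otimes\ca)$ is an isomorphism if and only if its slice at each $C\in\cc$ is an isomorphism in $\cd\ca$. Using Remark~\ref{coherence} we compute
\[F(X)(?,C)\cong\RHom_{\cb^\text{op}}(X(C,?),\cb)\otimes^\L_\cb Y\quad\text{and}\quad G(X)(?,C)\cong\RHom_{\cb^\text{op}}(X(C,?),Y),\]
and identify $\theta_Y(X)(?,C)$ with the classical derived-duality map. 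It therefore suffices to show that for every $M\in\per(\cb^\text{op})$ the map $\RHom_{\cb^\text{op}}(M,\cb)\otimes^\L_\cb Y\ra\RHom_{\cb^\text{op}}(M,Y)$ is an isomorphism in $\cd\ca$. Again by thickness in $M$ and the description $\per(\cb^\text{op})=\thick_{\cd(\cb^\text{op})}(\{B^\che\mid B\in\cb\})$, this reduces to the single case $M=B^\che$, where Lemma~\ref{rem.relations between RHom and tensor}(3)--(4) identifies both sides with $Y(?,B)$.

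Part~2) assumes moreover that $\ca$ is $k$-projective, so that Notation~\ref{two-functors} supplies the two triangulated 2-functors and the chain-level $\psi$ is bifunctorial. Repeating the deriving procedure while varying $Y$ as well, the morphisms $\theta(X,Y)$ assemble into a morphism of triangulated 2-functors; this is essentially a diagram chase using naturality of $\psi$ and the functoriality of the resolutions. The main obstacle is the well-definedness of $\theta_Y(X)$ in Part~1): the composition $F$ involves two essentially \emph{unrelated} resolutions ($\ch$-projective of $X$ hidden inside $\RHom_{\cb^\text{op}}(?,\cb)$, and $\ch$-projective of $\RHom_{\cb^\text{op}}(X,\cb)$ hidden inside $?\otimes^\L_\cb Y$), and one must verify that $\psi$ is compatible with both. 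The slicewise reduction at $C$ is what finally converts the bimodule statement into the classical derived duality for perfect modules, so that the perfection hypothesis on $X(C,?)$ can be applied.
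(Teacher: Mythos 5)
Your construction of $\theta_Y(X)$ — the chain-level $\psi$ of Lemma~\ref{lemma:contravariant Hom and tensor} evaluated at $({\bf p}_{\cb^\text{op}\otimes\cc}X,Y)$, precomposed with the comparison map coming from the outer $\ch$-projective resolution — is exactly the paper's definition, and your reduction (slice at each $C\in\cc$ via Remark~\ref{coherence} and Lemma~\ref{preserving H-projectivity}, then thickness in the perfect variable, then the check at $M=B^\che$ where both sides become $Y(?,B)$) is the paper's argument verbatim. Part 2) is handled in the paper just as tersely as in your proposal, so the two proofs coincide in substance.
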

\begin{proof}
1) Using the map $\psi$ of Lemma~\ref{lemma:contravariant Hom and tensor}, we have a morphism of dg $\cc$-$\ca$-bimodules
\[
\xymatrix{F(X)\ar@{=}[d] \\
{\bf p}_{\cc^\text{op}\otimes\cb}\Hom_{\cc_{dg}(\cb^\text{op})}({\bf p}_{\cb^\text{op}\otimes\cc}X,\cb)\otimes_{\cb}Y\ar[d]^{\pi\otimes\id} \\
\Hom_{\cc_{dg}(\cb^\text{op})}({\bf p}_{\cb^\text{op}\otimes\cc}X,\cb)\otimes_{\cb}Y\ar[d]^{\psi} \\
\Hom_{\cc_{dg}(\cb^\text{op})}({\bf p}_{\cb^\text{op}\otimes\cc}X,Y)\ar@{=}[d] \\
GX,
}
\]
where $\pi$ is an $\ch$-projective resolution. We denote by $\theta_{Y}(X)$ this composition, which defines the desired morphism of triangle functors. Thanks to Remark~\ref{coherence}, we know that $\theta_{Y}(X)$ is an isomorphism if and only if $\theta_{Y}(X(C,?))$ is an isomorphism for each $C\in\cc$. Here $\theta_{Y}(X(C,?))$ is to be understood as the evaluation of the version of $\theta_{Y}$ obtained when $\cc=k$. But for this version we have that
\[
\theta_{Y}(B^\che): F(B^\che)=B^\we\otimes_{\cb}Y\cong Y(?,B)\ra \Hom_{\cc_{dg}(\cb^\text{op})}(B^\che,Y)=G(B^\che)
\]
is the canonical isomorphism. Moreover, the full subcategory of $\cd(\cb^\text{op})$ formed by those objects $X$ such that $\theta_{Y}(X)$ is an isomorphism is closed under shifts, extensions and direct summands. This implies the aimed result.

2) We use the previously defined $\theta_{Y}(X)$ and an $\ch$-injective resolution $Y\ra{\bf i}_{\cb^\text{op}\otimes\ca}Y$.
\end{proof}


\begin{remark}
Last proposition admits a left-right symmetric version whose statement is left to the reader. We shall freely use these two symmetric versions, specially in the case when $\cc=k$ or even $\ca=\cc=k$.
\end{remark}

\begin{notation}\label{B-dual}
Suppose that $\ca$ is a $k$-projective small dg category and that $T$ is a dg $\cb$-$\ca$-bimodule. Then, following Notation~\ref{one-functors}, we define a dg $\ca$-$\cb$-bimodule
\[T^{\ast}=\RHom_{\cb^\text{op}}(T,\cb)
\]
as follows: 
\[T^\ast(B,A)=\Hom_{\cc_{dg}(\cb^\text{op})}(({\bf p}_{\cb^\text{op}\otimes\ca}T)(A,?),\cb(B,?)).
\]
We can then consider
\[T^{\ast\ast}=\RHom_{\cb}(T^\ast,\cb),
\]
which is a dg $\cb$-$\ca$-bimodule as follows:
\[T^{\ast\ast}(A,B)=\Hom_{\cc_{dg}\cb}(({\bf p}_{\ca^\text{op}\otimes\cb}T^\ast)(?,A),\cb(?,B)).
\]
Denote by
\[\sigma: T\ra T^{\ast\ast}
\]
the morphism in $\cd(\cb^\text{op}\otimes\ca)$ using the following chain of morphisms (which involves $\ch$-projective and $\ch$-injective resolutions):
\[
\xymatrix{
T \\
{\bf p}_{\cb^\text{op}\otimes\ca}T\ar[u]_{\wr}\ar[d]^{\text{ev}} \\
\Hom_{\cc_{dg}\cb}(\Hom_{\cc_{dg}(\cb^\text{op})}({\bf p}_{\cb^\text{op}\otimes\ca}T,\cb),\cb)\ar[d] \\
\Hom_{\cc_{dg}\cb}({\bf p}_{\ca^\text{op}\otimes\cb}\Hom_{\cc_{dg}(\cb^\text{op})}({\bf p}_{\cb^\text{op}\otimes\ca}T,\cb),\cb)\ar@{=}[d] \\
T^{\ast\ast},
}
\]
where ${\bf p}_{\cb^\text{op}\otimes\ca}T\ra T$ is a $\ch$-projective resolution in $\ch(\cb^\text{op}\otimes\ca)$, the last map is induced by the $\ch$-projective resolution 
\[
{\bf p}_{\ca^\text{op}\otimes\cb}\Hom_{\cc_{dg}(\cb^\text{op})}({\bf p}_{\cb^\text{op}\otimes\ca}T,\cb)\ra \Hom_{\cc_{dg}(\cb^\text{op})}({\bf p}_{\cb^\text{op}\otimes\ca}T,\cb)
\]
in $\ch(\ca^\text{op}\otimes\cb)$, and the map `ev' is the evaluation map
\[
X\ra\Hom_{\cc_{dg}(\cb)}(\Hom_{\cc_{dg}(\cb^\text{op})}(X,\cb),\cb)\ko x\mapsto (\text{ev}(x): f\mapsto (-1)^{|x|\cdot |f|}f(x)).
\]
\end{notation}

\begin{lemma}\label{B-duality and isos}
Let $\ca$ and $\cb$ be dg categories. Assume $\ca$ is $k$-projective. If $T$ is a dg $\cb$-$\ca$-bimodule such that $T(A,?)\in\per(\cb^\text{op})$ for all $B\in\cb$, then the following assertions hold:
\begin{itemize}
\item[1)] $\sigma:T\ra T^{\ast\ast}$ is an isomorphism in $\cd(\cb^\text{op}\otimes\ca)$.
\item[2)] The triangle functors $?\otimes^\L_{\cb}T$ and $\RHom_{\cb}(T^\ast,?)$ from $\cd\cb$ to $\cd\ca$ are naturtally isomorphic.
\item[3)] The triangle functors $T^\ast\otimes^\L_{\cb}?$ and $\RHom_{\cb^\text{op}}(T,?)$ from $\cd(\cb^\text{op})$ to $\cd(\ca^\text{op})$ are naturtally isomorphic.
\end{itemize}
\end{lemma}
\begin{proof}
We just need to prove assertions 1) and 3), for assertion 2) follows from 3) by symmetry.

1) For each $A\in\ca$ the morphism $\sigma(?,A):T(A,?)\ra T^{\ast\ast}(A,?)$ gets identified with the unit
\[\sigma(T(A,?)):T(A,?)\ra\RHom_{\cb}(\RHom_{\cb^\text{op}}(T(A,?),\cb),\cb)
\]
of the adjunction
\[
\xymatrix{
(\cd\cb)^\text{op}\ar@<1ex>[d]^{\RHom_{\cb}(?,\cb)} \\
\cd(\cb^\text{op}).\ar@<1ex>[u]^{\RHom_{\cb^\text{op}}(?,\cb)}
}
\]
After Proposition~\ref{prop.relation between unit-counit isomorphism}, we know that it is an isomorphism if and only if the unit
\[
\lambda(B^\we):B^\we\ra\RHom_{\cb}(\cb,B^\we\otimes^\L_{\cb}\cb)
\]
of the adjunction
\[\xymatrix{
\cd\cb\ar@<1ex>[d]^{\RHom_{\cb}(\cb,?)} \\
\cd\cb\ar@<1ex>[u]^{?\otimes^\L_{\cb}\cb}
}
\]
is an isomorphism. But this last condition clearly holds.

3) After part 2) of Proposition~\ref{relation among dualities} we have a morphism of triangle functors
\[
\theta(T,?):T^\ast\otimes^\L_{\cb}?\ra\RHom_{\cb^\text{op}}(T,?).
\]
It is an isomorphism thanks to part 1) of Proposition~\ref{relation among dualities}.
\end{proof}

\section{Results for dg categories}\label{Results for dg categories}

\begin{corollary}\label{cor.RHom fully faithful with no extras}
Let $\ca$ and $\cb$ be small dg categories and let $T$ be a dg $\cb$-$\ca$-bimodule. The following assertions are
equivalent:
\begin{itemize}
\item[1)] $\RHom_\ca(T,?):\cd\ca\ra\cd\cb$ is fully faithful. 
\item[2)] The counit map $\delta:\RHom_\ca(T,?)\otimes_\cb^\L T\ra \id_{\cd\ca}$ is an isomorphism. 
\item[3)] $?\otimes_\cb^\L T:\cd\cb\ra\cd\ca$ induces a triangle equivalence
\[
\cd\cb/\ker(?\otimes_\cb^\L T)\arr{\sim}\cd\ca.
\]
\item[4)] The functor 
\[
\RHom_\ca(T,?)\otimes_\cb^\L T:\cd\ca\ra\cd\ca
\] 
preserves coproducts and 
\[
\delta(A^\we):\RHom_\ca(T,A^\we)\otimes_\cb^\L T\ra A^\we
\] 
is an isomorphism in $\cd\ca$, for each $A\in\ca$. 
\end{itemize}
\end{corollary}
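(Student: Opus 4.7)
The plan is to view the four statements through the adjoint pair $(L,R) = (?\otimes_\cb^\L T, \RHom_\ca(T,?))$. The equivalences $1) \Leftrightarrow 2) \Leftrightarrow 3)$ are then a direct translation of Proposition~\ref{GabrielZisman result}: condition $1)$ matches item (1), condition $2)$ matches item (2), and for condition $3)$ one uses that when $L$ is a triangle functor the class $\cs$ of morphisms it inverts coincides with those whose cone lies in $\ker(L)$, so the category of fractions $\cd\cb[\cs^{-1}]$ and the Verdier quotient $\cd\cb/\ker(?\otimes_\cb^\L T)$ agree. The implication $2) \Rightarrow 4)$ is immediate: if $\delta$ is a natural isomorphism then $\RHom_\ca(T,?)\otimes_\cb^\L T \cong \id_{\cd\ca}$, so in particular preserves small coproducts, and $\delta(A^\we)$ is an isomorphism for every $A\in\ca$.

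The substantive direction is $4) \Rightarrow 2)$. Consider the full subcategory $\cc$ of $\cd\ca$ whose objects are those $M$ for which $\delta(M)$ is an isomorphism. Since $\delta$ is a morphism of triangle functors, Corollary~\ref{cor:point-stable form a thick subcategory} gives that $\cc$ is a thick subcategory of $\cd\ca$, and the hypothesis that $\delta(A^\we)$ is an isomorphism for each $A\in\ca$ says that $\cc$ contains every representable. Since $\id_{\cd\ca}$ preserves small coproducts and, by assumption, so does $\RHom_\ca(T,?)\otimes_\cb^\L T$, the subcategory $\cc$ is also closed under small coproducts, hence is a localizing subcategory. As the representables $\{A^\we\}_{A\in\ca}$ form a set of compact generators of $\cd\ca$, one has $\Tria_{\cd\ca}(\{A^\we\}_{A\in\ca}) = \cd\ca$, and therefore $\cc = \cd\ca$. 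This is exactly condition $2)$.

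The only real obstacle is this last dévissage. Both ingredients are essential: thickness (automatic from the triangulated nature of $\delta$, via Corollary~\ref{cor:point-stable form a thick subcategory}) together with closure under small coproducts (which is precisely what the extra clause in hypothesis $4)$ provides, since $\RHom_\ca(T,?)$ need not preserve coproducts in general) are what allow one to transport the counit isomorphism from the set of compact generators $\{A^\we\}_{A\in\ca}$ to all of $\cd\ca$.
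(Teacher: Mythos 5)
Your proof is correct and follows essentially the same route as the paper: the equivalence of 1), 2), 3) via Proposition~\ref{GabrielZisman result} together with the identification of the localized category with the Verdier quotient by $\ker(?\otimes_\cb^\L T)$, and 4) $\Rightarrow$ 2) by a d\'evissage showing that the objects where $\delta$ is invertible form a localizing subcategory containing the representables. Your appeal to Corollary~\ref{cor:point-stable form a thick subcategory} just makes explicit a step the paper leaves implicit.
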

\begin{proof}
The equivalence between 1), 2) and 3) follows from Proposition~\ref{GabrielZisman result}. Indeed, one can easily prove that the set $\cs$ of morphisms $u$ of $\cd\cb$ such that $u\otimes^\L_{\cb}T$ is an isomorphism equals the set of morphisms $u$ of $\cd\cb$ whose cone is in the kernel of $?\otimes^\L_{\cb}T$.

Clearly, 2) implies 4).

$4)\Rightarrow 2)$ The hypothesis implies that the full subcategory $\cc$ of $\cd\ca$ formed by those objects $M$ such that
$\delta(M)$ is an isomorphism is a triangulated subcategory of $\cd\ca$, closed under small coproducts and containing all $A^\we\ko A\in\ca$. It follows
that $\cc=\cd\ca$.
\end{proof}

The following remark is related to Proposition 2.6 of \cite{BazzoniManteseTonolo}.

\begin{remark} \label{rem.RHom fully faithful and smashing}
In the situation of Corollary~\ref{cor.RHom fully faithful with no extras}, $\cn=\ker(?\otimes_\cb^\L T)$ is a smashing subcategory
of $\cd\cb$ if, and only if, $\RHom_\ca(T,?)$ preserves coproducts if, and only if, $T(?,B)\in\per(\ca)$ for each $B\in\cb$.
\end{remark}

\begin{theorem} \label{teor.derived tensor fully faithful}
Let $\ca$ and $\cb$ be small dg categories and let $T$ be a dg $\cb$-$\ca$-bimodule. The following assertions are equivalent:
\begin{itemize}
\item[1)] $?\otimes_\cb^\L T:\cd\cb\ra\cd\ca$ is fully faithful. 
\item[2)] The unit map $\lambda:\id_{\cd\cb}\ra\RHom_\ca(T,?\otimes_\cb^\L T)$ is an isomorphism. 
\item[3)] $\lambda (B^\we)$ is an isomorphism for each $B\in\cb$, and the composition of the functors
\[
\xymatrix{
\cd\cb\ar[rr]^{?\otimes_\cb^\L T} &&\cd\ca\ar[rr]^{\RHom_\ca(T,?)} && \cd\cb
}
\] 
is a functor preserving small coproducts. 
\item[4)] $\lambda (B^\we)$ is an isomorphism for each $B\in\cb$, and the restriction of $\RHom_\ca(T,?)$ to $\Tria_{\cd\ca}(T(?,B)\ko B\in\cb)$ preserves small coproducts (and is fully faithful).
\item[5)] $\lambda (B^\we)$ is an isomorphism and $T(?,B)$ is a compact object of the category $\Tria_{\cd\ca}(T(?,B')\ko B'\in\cb)$,
for each $B\in\cb$.
\end{itemize}
It this case $?\otimes_\cb^\L T$ induces a triangle equivalence
\[
\cd\cb\arr{\sim}\op{im}(?\otimes_\cb^\L T)=\Tria_{\cd\ca}(T(?,B)\ko B\in\cb),
\] 
and $(\op{im}(?\otimes_\cb^\L T),\ker(\RHom_\ca(T,?)))$ is a semiortogonal decomposition of $\cd\ca$.
\end{theorem}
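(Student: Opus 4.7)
\noindent The plan is to prove the logical chain $1)\Leftrightarrow 2)\Leftrightarrow 3)$ and then establish $2)\Leftrightarrow 4)\Leftrightarrow 5)$, concluding with the final assertions about the essential image.

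For $1)\Leftrightarrow 2)$, I would invoke the dual of Proposition~\ref{GabrielZisman result} applied to the adjoint pair $(?\otimes_\cb^\L T,\RHom_\ca(T,?))$: a left adjoint is fully faithful if and only if the unit is an isomorphism. Implication $2)\Rightarrow 3)$ is immediate since under 2) the composition $\RHom_\ca(T,?)\circ(?\otimes_\cb^\L T)$ is isomorphic to $\id_{\cd\cb}$, which preserves coproducts, and $\lambda(B^\we)$ is certainly an isomorphism. For $3)\Rightarrow 2)$ I would use a devissage argument: by Corollary~\ref{cor:point-stable form a thick subcategory}, the class $\cc$ of objects $X$ of $\cd\cb$ for which $\lambda(X)$ is an isomorphism is a thick subcategory; since $?\otimes^\L_\cb T$ preserves small coproducts (being a left adjoint) and, by hypothesis, so does $\RHom_\ca(T,?\otimes_\cb^\L T)$, while $\id_{\cd\cb}$ obviously does, the class $\cc$ is closed under small coproducts. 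Containing all $B^\we$ with $B\in\cb$, it must equal $\Tria_{\cd\cb}(B^\we\ko B\in\cb)=\cd\cb$.

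For $2)\Rightarrow 4)$ I would argue as follows: assuming $\lambda$ is an isomorphism, $?\otimes^\L_\cb T$ is fully faithful and preserves small coproducts, so its essential image is closed under shifts, extensions and small coproducts, and contains all $T(?,B)=B^\we\otimes^\L_\cb T$; thus $\op{im}(?\otimes^\L_\cb T)\supseteq\Tria_{\cd\ca}(T(?,B)\ko B\in\cb)$, and the reverse inclusion is obvious. On this image the functor $\RHom_\ca(T,?)$ is (up to natural isomorphism via $\lambda$) a quasi-inverse to $?\otimes^\L_\cb T$ and hence preserves small coproducts. Conversely, $4)\Rightarrow 3)$ is immediate because the image of $?\otimes^\L_\cb T$ is contained in $\Tria_{\cd\ca}(T(?,B)\ko B\in\cb)$, on which $\RHom_\ca(T,?)$ preserves small coproducts by hypothesis.

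The equivalence $4)\Leftrightarrow 5)$ is essentially a reformulation of compactness. Indeed, for any dg $\cb$-$\ca$-bimodule one computes, for each $B\in\cb$ and $n\in\Z$,
\[
\H^n\bigl(\RHom_\ca(T,M)(B)\bigr)=\Hom_{\cd\ca}\bigl(T(?,B),\Sigma^n M\bigr),
\]
so preservation of small coproducts by $\RHom_\ca(T,?)\!:\!\Tria_{\cd\ca}(T(?,B)\ko B\in\cb)\ra\cd\cb$ amounts exactly to compactness of every $T(?,B)$ inside $\Tria_{\cd\ca}(T(?,B')\ko B'\in\cb)$ (recalling that $\Tria$ is localizing, hence coproducts in it coincide with coproducts in $\cd\ca$).

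Finally, the remaining assertions come for free: the description $\op{im}(?\otimes_\cb^\L T)=\Tria_{\cd\ca}(T(?,B)\ko B\in\cb)$ is already established above, and to verify that $(\op{im}(?\otimes_\cb^\L T),\ker(\RHom_\ca(T,?)))$ is a semiorthogonal decomposition I would check the two conditions in the definition. Hom-vanishing is a direct adjunction computation: if $M=N\otimes^\L_\cb T$ and $\RHom_\ca(T,P)=0$, then $\Hom_{\cd\ca}(M,P)\cong\Hom_{\cd\cb}(N,\RHom_\ca(T,P))=0$. To produce the required triangles, given $M\in\cd\ca$, set $N=\RHom_\ca(T,M)$ and complete the counit $\varepsilon(M):N\otimes^\L_\cb T\ra M$ to a triangle $N\otimes^\L_\cb T\ra M\ra Y\ra \Sigma(N\otimes^\L_\cb T)$; applying $\RHom_\ca(T,?)$ and using the triangle identity $\RHom_\ca(T,\varepsilon(M))\circ\lambda(N)=\id_N$ together with the fact that $\lambda(N)$ is an isomorphism (from 2)), we get $\RHom_\ca(T,Y)=0$, so $Y\in\ker(\RHom_\ca(T,?))$, as required. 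The main subtlety I expect is making rigorous the step $3)\Rightarrow 2)$, where one must combine preservation of small coproducts by both functors in the composition with the thickness of $\cc$ to obtain closure under coproducts.
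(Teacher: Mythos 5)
Your proposal is correct and follows essentially the same route as the paper: the Gabriel--Zisman criterion for $1)\Leftrightarrow 2)$, d\'evissage over the representables $B^\we$ for $3)\Rightarrow 2)$, identification of $\op{im}(?\otimes^\L_\cb T)$ with $\Tria_{\cd\ca}(T(?,B)\ko B\in\cb)$ for the passage to $4)$, and the translation of coproduct preservation of $\RHom_\ca(T,?)$ into compactness of the $T(?,B)$ for $4)\Leftrightarrow 5)$. The only (harmless) divergences are cosmetic: you verify the semiorthogonal decomposition by exhibiting the approximation triangles via the counit, whereas the paper checks $\op{im}(?\otimes^\L_\cb T)^\perp=\ker\RHom_\ca(T,?)$ using the existing right adjoint, and you phrase the compactness equivalence through cohomology groups rather than the paper's commutative diagram of complexes.
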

\begin{proof}
$1)\Leftrightarrow 2)$ follows from well-known properties of adjunctions (use the dual of Proposition~\ref{GabrielZisman result}).

$2)\Rightarrow 3)$ is clear.

$3)\Rightarrow 2)$ The full subcategory $\cc$ of $\cd\cb$ consisting of the objects $N$ such that $\lambda(N)$ is an isomorphism, is a triangulated subcategory closed under small coproducts and containing all the $B^\wedge\ko B\in\cb$. It follows that $\cc=\cd\cb$.

$1),3)\Rightarrow 4)$ By Corollary~\ref{cor.preservation of thickness} we have that $\op{im}(?\otimes^\L_{\cb}T)$ is contained in the category $\Tria_{\cd\ca}(T(?,B)\ko B\in\cb)$. But, since $?\otimes^\L_{\cb}T$ is fully faithful, it follows that $\op{im}(?\otimes^\L_{\cb}T)$ is a full triangulated subcategory of $\cd\ca$. Moreover, it containes the objects of the form $T(?,B)\ko B\in\cb$, and it is closed under coproducts. Therefore, we have
\[\op{im}(?\otimes^\L_{\cb}T)=\Tria_{\cd\ca}(T(?,B)\ko B\in\cb).
\]
Now the fact that $\RHom_\ca(T,?\otimes_\cb^\L T)$ preserves small coproducts implies that the restriction of $\RHom_\ca(T,?)$ to
$\op{im}(?\otimes_\cb^\L T)=\Tria_{\cd\ca}(T(?,B)\ko B\in\cb)$ preserves small coproducts. Moreover, this restriction is quasi-inverse of the equivalence
\[
?\otimes^\L_{\cb}T:\cd\cb\arr{\sim}\Tria_{\cd\ca}(T(?,B)\ko B\in\cb).
\]

$4)\Leftrightarrow 5)$ For each family $\{M_{i}\}_{i\in I}$ of objects of $\Tria_{\cd\ca}(T(?,B)\ko B\in\cb)$ and each $B\in\cb$ we have a commutative diagram
\[
\xymatrix{\coprod_{i}\RHom_{\ca}(T(?,B),M_{i})\ar[r]\ar@{=}[d] & \RHom_{\ca}(T(?,B),\coprod_{i}M_{i})\ar@{=}[d] \\
\coprod_{i}\RHom_{\ca}(T,M_{i})(B)\ar[r] & \RHom_{\ca}(T,\coprod_{i}M_{i})(B).
}
\]
Then the upper horizontal arrow is an isomorphism if and only if so is the bottom one.

$4)\Rightarrow 3)$ The functor
\[
\RHom_\ca(T,?\otimes_\cb^\L T):\cd\ca\ra\cd\ca
\]
is the composition of the following  functors
\[
\xymatrix{
\cd\ca\ar[rr]^{?\otimes_\cb^\L T\hspace{1cm}} && \Tria (T(?,B)\ko B\in\cb)\ar[rr]^{\hspace{1cm}\RHom_\ca(T,?)} &&\cd\ca. 
}
\]
The hypothesis guarantees that the second one in this composition preserves small coproducts and, hence, also the composition does.

We finally prove that $(\op{im}(?\otimes_\cb^\L T),\ker(\RHom_\ca(T,?)))$ is a semi-orthogonal decomposition of $\cd\ca$. Since we know that the inclusion
\[
\op{im}(?\otimes_\cb^\L T)\ra\cd\ca
\]
has a right adjoint, we
only need to prove the equality 
\[
\op{im}(?\otimes_\cb^\L T)^\perp =\ker(\RHom_\ca(T,?)),
\] 
which is easily checked. 
\end{proof}

\begin{theorem}\label{special localization result}
Let $\cb$ and $\ca$ be dg $k$-categories and let $T$ be a dg $\cb$-$\ca$-bimodule. Consider the following statements:
\begin{itemize}
\item[1)] $T$ is homologically faithfully balanced and for each $A\in\ca$ we have $T(A,?)\in\per(\cb^\text{op})$.
\item[2)] $T$ is homologically faithfully balanced and each $A^\we$ belongs to the category $\op{thick}_{\cd\ca}(T(?,B)\ko B\in\cb)$.
\item[3)] $\RHom_{\ca}(T,?):\cd\ca\ra\cd\cb$ is fully faithful, preserves compact objects and each $B^\we\ko B\in\cb$, is in its essential image.
\item[4)] For each $B\in\cb$ the unit $\lambda(B^\we):B^\we\ra\op{RHom}_{\ca}(T,T(?,B))$ is an isomorphism and each $A^\we$ belongs to $\thick_{\cd\ca}(T(?,B)\ko B\in\cb)$.
\item[5)] For each $B\in\cb$ the unit $\lambda(B^\we):B^\we\ra\RHom_\ca(T,T(?,B))$ is an isomorphism and $T\otimes^\L_\ca?:\cd(\ca^\text{op})\ra\cd(\cb^\text{op})$ is a fully faithful functor whose right adjoint preserves small coproducts.
\item[6)] For each $B\in\cb$ the unit $\lambda(B^\we):B^\we\ra\RHom_\ca(T,T(?,B))$ is an isomorphism and $?\otimes^\L_{\cb}T$ has a fully faithful left adjoint.
\item[7)] For each $B\in\cb$ the unit $\lambda(B^\we):B^\we\ra\RHom_\ca(T,T(?,B))$ is an isomorphism and there exists a dg $\ca$-$\cb$-bimodule $T'$ such that the couples $(?\otimes^\L_{\ca}T',?\otimes^\L_{\cb}T)$ and $(T\otimes^\L_{\ca}?,T'\otimes^\L_{\cb}?)$ are adjoint pairs with fully faithful left components.
\end{itemize}
Then assertions $1)$ - $6)$ are equivalent and implied by $7)$. Moreover, if $\ca$ is $k$-projective, then all the assertions $1)$ - $7)$ are equivalent.
\end{theorem}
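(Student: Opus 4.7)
The plan is to establish a circle of implications among 1)--5), then connect 6) via the three-step adjunction, and finally handle 7) separately. The backbone will be Propositions~\ref{prop.relation between unit-counit isomorphism} and \ref{prop.relation between unit-counit isomorphism bis} (which identify when the units and counits of the three adjunctions associated to $T$ are isomorphisms on representables), together with Corollary~\ref{cor.preservation of thickness} (triangle functors preserve thickness) and Corollary~\ref{cor.RHom fully faithful with no extras} (criterion for $\RHom_\ca(T,?)$ to be fully faithful).

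For $1)\Leftrightarrow 2)$: homological faithful balance of $T$ is equivalent, via Propositions~\ref{prop.relation between unit-counit isomorphism} and \ref{prop.relation between unit-counit isomorphism bis}, to the isomorphisms $\sigma(B^\vee):B^\vee\cong \RHom_\ca(T(?,B),T)$ and $\tau(A^\wedge):A^\wedge\cong \RHom_{\cb^\text{op}}(T(A,?),T)$; applying Corollary~\ref{cor.preservation of thickness} to $\RHom_\ca(?,T)$ and $\RHom_{\cb^\text{op}}(?,T)$ will convert ``$T(A,?)\in\per(\cb^\text{op})=\thick(B^\vee)$'' into ``$A^\wedge\in\thick(T(?,B))$'' and back. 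The equivalence $1)\Leftrightarrow 5)$ follows because under $\rho(A^\vee)$ iso (half of homological balance), the isomorphism class of $\rho$ is triangulated and contains $\per(\ca^\text{op})$, and closure under coproducts is precisely the condition that $\RHom_{\cb^\text{op}}(T,?)$ preserve coproducts, which by Remark~\ref{rem.RHom fully faithful and smashing} is equivalent to $T(A,?)\in\per(\cb^\text{op})$.

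For $3)\Leftrightarrow 4)$, the forward direction is direct: fully faithfulness of $\RHom_\ca(T,?)$ gives $\varepsilon(A^\wedge)$ iso, ``preserves compacts'' lands $\RHom_\ca(T,A^\wedge)$ in $\thick(B^\wedge)$, and pushing through $?\otimes^\L_\cb T$ via Corollary~\ref{cor.preservation of thickness} yields $A^\wedge\cong \RHom_\ca(T,A^\wedge)\otimes^\L_\cb T\in\thick(T(?,B))$; at the same time, $B^\wedge$ lying in the essential image of the fully faithful $\RHom_\ca(T,?)$ forces $\lambda(B^\wedge)$ iso by Lemma~\ref{equivalence between reflective and coreflective objects}. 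Conversely, starting from 4), Proposition~\ref{prop.relation between unit-counit isomorphism} delivers $\varepsilon$ iso on $\thick(T(?,B))\supseteq \per(\ca)$, and extending this to all of $\cd\ca$ via Corollary~\ref{cor.RHom fully faithful with no extras}(4) will be the crucial technical step: I will show that the composite $\RHom_\ca(T,?)\otimes^\L_\cb T$ preserves small coproducts, by arguing that the cone of the natural comparison $\coprod \RHom_\ca(T,M_i)\to \RHom_\ca(T,\coprod M_i)$ lies in $\ker(?\otimes^\L_\cb T)$, using $\lambda(B^\wedge)$ iso and the unit--counit identities.

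For 6) and 7): Lemma~\ref{two-sided fully faithful adjoint} applied to the three-step adjunction $L'\dashv(?\otimes^\L_\cb T)\dashv\RHom_\ca(T,?)$ will give $6)\Rightarrow 3)$ by transferring fully faithfulness from $L'$ to $\RHom_\ca(T,?)$; conversely $4)\Rightarrow 6)$ requires producing the left adjoint $L'$, which I would obtain via Brown representability after showing that $?\otimes^\L_\cb T$ preserves products (a consequence of the equivalent conditions), and then upgrade $L'$ to fully faithful again via Lemma~\ref{two-sided fully faithful adjoint}. The implication $7)\Rightarrow 6)$ is immediate from the definitions; and for $6)\Rightarrow 7)$ under $k$-projectivity of $\ca$, I would invoke Keller's representability theorem to realise the coproduct-preserving triangle functor $L'$ as $?\otimes^\L_\ca T'$ for some dg $\ca$-$\cb$-bimodule $T'$, and a symmetric analysis built on 5) will identify $\RHom_{\cb^\text{op}}(T,?)$ with $T'\otimes^\L_\cb ?$, yielding the second adjoint pair. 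The two main obstacles are, first, the coproduct-closure argument in $4)\Rightarrow 3)$ and, second, the bimodule representation step in $6)\Rightarrow 7)$, where the $k$-projective hypothesis on $\ca$ is essential to convert abstract triangle functors into honest derived tensor products.
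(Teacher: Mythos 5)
Your overall architecture (units/counits on representables via Propositions~\ref{prop.relation between unit-counit isomorphism} and \ref{prop.relation between unit-counit isomorphism bis}, thickness transport via Corollary~\ref{cor.preservation of thickness}, Brown representability for the left adjoint, Lemma~\ref{two-sided fully faithful adjoint}, and $T'=T^\ast$ for assertion 7) matches the paper's, and the treatments of $1)\Leftrightarrow 2)$, $1)\Leftrightarrow 5)$, $7)\Rightarrow 6)$ and $6)\Rightarrow 7)$ are essentially the paper's arguments. The problem is the step you yourself single out as crucial: passing from condition 4) (a statement about representables) to condition 3) (that the counit $\delta$ is an isomorphism on \emph{all} of $\cd\ca$).

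Your plan is to verify hypothesis 4) of Corollary~\ref{cor.RHom fully faithful with no extras} by showing that the cone of the comparison map $\coprod_i\RHom_\ca(T,M_i)\ra\RHom_\ca(T,\coprod_i M_i)$ lies in $\ker(?\otimes^\L_\cb T)$, ``using $\lambda(B^\we)$ iso and the unit--counit identities.'' Those inputs are not sufficient. A right dg $\cb$-module $N$ satisfies $N\otimes^\L_\cb T=0$ iff $N\otimes^\L_\cb T(A,?)=0$ in $\cd k$ for every $A\in\ca$, and to control this for the cone in question you need to know that each $T(A,?)$ is perfect over $\cb^\text{op}$ and, more to the point, you need the dualities $T(A,?)\cong T(A,?)^{\ast\ast}$, $T(A,?)^\ast\in\per\cb$ and $T(A,?)^\ast\otimes^\L_\cb T\cong A^\we$ (the last one being exactly the other half of homological faithful balance, via $\tau(A^\we)$). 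With these one gets $\RHom_\ca(T,M)\otimes^\L_\cb T(A,?)\cong\RHom_\ca(T(A,?)^\ast\otimes^\L_\cb T,M)\cong M(A)$, which is the paper's route (Steps 1--3 of $1),2)\Rightarrow 3)$, resting on Propositions~\ref{relation among dualities} and \ref{B-duality and isos}); at that point $\RHom_\ca(T,?)\otimes^\L_\cb T\cong\id$ and coproduct preservation is automatic, so your comparison-cone detour buys nothing and cannot be completed without the same machinery. Concretely: you must first prove $4)\Rightarrow 2)\Rightarrow 1)$ and then invoke the $\cb$-dual bimodule $T^\ast$; the proposal never brings Propositions~\ref{relation among dualities} and \ref{B-duality and isos} into the main equivalence chain, and that is the missing idea. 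A smaller issue: Lemma~\ref{two-sided fully faithful adjoint} alone gives only full faithfulness of $\RHom_\ca(T,?)$ in your $6)\Rightarrow 3)$, not preservation of compacts nor that each $B^\we$ lies in the essential image; the paper instead proves $6)\Rightarrow 4)$, using that the left adjoint preserves compacts together with the unit isomorphism.
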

\begin{proof}
$1)\Rightarrow 2)$ The fact that $T$ is homologically faithfully balanced implies that $\rho(A^\vee)$ is an isomorphism in $\cd(\ca^\text{op})$, for each
$A\in\ca$. By Proposition~\ref{prop.relation between unit-counit isomorphism bis}, it follows that
the map 
\[
\tau(A^\we):A^\we\ra\RHom_{\cb^\text{op}}(\RHom_\ca(A^\we,T),T)\cong\RHom_{\cb^\text{op}}(T(A,?),T)
\] 
is an isomorphism in $\cd\ca$, for each $A\in\ca$. The fact that $T(A,?)\in\per(\cb^\text{op})$ implies (see Corollary~\ref{cor.preservation of thickness}) that $A^\we$ is in $\thick_{\cd\ca}(\RHom_{\cb^\text{op}}(B^\vee,T)\ko B\in\cb)=\thick_{\cd\ca}(T(?,B)\ko B\in\cb)$, for each $A\in\ca$.

$2)\Rightarrow 1)$ The fact that $T$ is homologically faithfully balanced implies that $\lambda (B^\we)$ is an isomorphism in
$\cd\cb$ for each $B\in\cb$. Using Proposition~\ref{prop.relation between unit-counit isomorphism}, we get that 
\[
\sigma (B^\vee):B^\vee\ra\RHom_\ca(\RHom_{\cb^\text{op}}(B^\vee,T),T)=\RHom_\ca(T(?,B),T)
\] 
is an isomorphism in $\cd(\cb^\text{op})$, for each $B\in\cb$. But the hypothesis together with Corollary~\ref{cor.preservation of thickness} imply that $T(A,?)=\RHom_\ca(A^\we,T)$ is in the category 
\[
\thick_{\cd(\cb^\text{op})}(\RHom_\ca(T(?,B),T)\ko B\in\cb)=\thick_{\cd(\cb^\text{op})}(B^\vee\ko B\in\cb)=\per(\cb^\text{op}).
\]

$1)\ko 2)\Rightarrow 3)$ We need to prove that the counit map
\[
\delta: \RHom_{\ca}(T,?)\otimes^\L_{\cb}T\ra\id_{\cd\ca}
\]
is an isomorphism. This is equivalent to prove that for each object $A\in\ca$ the morphism
\[
\RHom_{\ca}(T,?)\otimes^\L_{\cb}T(A,?)=(\RHom_{\ca}(T,?)\otimes^\L_{\cb}T)(A)\ra ?(A)
\]
is an isomorphism of triangle functors $\cd\ca\ra\cd k$.

{\em Step 1: for each $A\in\ca$ we have $T(A,?)\cong T(A,?)^{\ast\ast}$ and $T(A,?)^\ast\in\per(\cb)$.} Note that $T(A,?)^\ast$ is the $\cb$-dual of the dg $\cb$-$k$-bimodule $T(A,?)$. That is to say, we are using Notation~\ref{B-dual} with $T$ replaced by $T(A,?)$ and $\ca$ replaced by $k$. Now, to prove the first part of the statement of this step apply part 1) of Lemma~\ref{B-duality and isos} to the dg $\cb$-$k$-bimodule $T(A,?)$. To prove the second part of the statement of this step note that $(B^\che)^\ast\cong B^{\we}\in\per\cb$ for each $B\in\cb$. Now apply Corollary~\ref{cor.preservation of thickness} using the fact that $T(A,?)\in\per(\cb^\text{op})=\thick_{\cd(\cb^\text{op})}(B^\che\ko B\in\cb)$.

{\em Step 2: for each $A\in\ca$ we have $T(A,?)^\ast\otimes^\L_{\cb}T\cong A^\we$.} We apply Proposition~\ref{relation among dualities} with $\cc=k$ and $Y=T$ and get an isomorphism of triangle functors
\[
T(A,?)^\ast\otimes^\L_{\cb}T\cong\RHom_{\cb^\text{op}}(T(A,?),T).
\]
Since $T$ is homologically faithfully balanced, $\rho(A^\we)$ is an isomorphism for all $A\in\ca$. Then, by Proposition~\ref{prop.relation between unit-counit isomorphism bis}, we get that
\[
\tau(A^\we):A^\we\ra\RHom_{\cb^\text{op}}(\RHom_{\ca}(A^\we,T),T)\cong \RHom_{\cb^\text{op}}(T(A,?),T)
\]
is an isomorphism. Thus we have the desired isomorphism
\[\xymatrix{
T(A,?)^\ast\otimes^\L_{\cb}T\ar[r] & \RHom_{\cb^\text{op}}(T(A,?),T)\ar[rr]^{\hspace{1.5cm}\tau(A^\we)^{-1}} && A^\we.
}
\]

{\em Step 3: $\delta$ is an isomorphism.}  Applying now a symmetric version of Proposition~\ref{relation among dualities}, for each $M\in\cd\ca$ we get an isomorphism in $\cd k$
\[
\xymatrix{
\RHom_{\ca}(T,M)\otimes^\L_{\cb}T(A,?)\ar[d]^\wr \\
\RHom_{\ca}(T,M)\otimes^\L_{\cb}T(A,?)^{\ast\ast}\ar[d]^\wr \\
\RHom_{\cb}(T(A,?)^\ast,\RHom_{\ca}(T,M))\ar[d]^\wr \\
\RHom_{\ca}(T(A,?)^\ast\otimes^\L_{\cb}T,M).
}
\]
Now, using step 2 we get an isomorphism in $\cd k$
\[
\RHom_{\ca}(T,M)\otimes^\L_{\cb}T(A,?)\cong\RHom_{\ca}(T(A,?)^\ast\otimes^\L_{\cb}T,M)\cong \RHom_{\ca}(A^\che,M)\cong M(A).
\]
It is routine to check that it is just the one given by the counit map $\delta(M)$.

{\em Step 4: each $B^\we$ is in the essential image of $\op{im}(\RHom_\ca(T,?))$.} This is a direct consequence of the fact that $T$ is homologically faithfully balanced since
\[
\lambda(B^\we):B^\we\ra\RHom_{\ca}(T,B^\we\otimes^\L_{\cb}T)=\RHom_{\ca}(T,T(?,B))
\]
is an isomorphism.

{\em Step 5: $\RHom_\ca(T,?)$ preserves compact objects.}  By hypothesis we have that
\[
A^\we\in\thick_{\cd\ca}(T(?,B)\ko B\in\cb).
\]
It follows that $\RHom_{\ca}(T,A^\we)$ belongs to
\[
\thick_{\cd\cb}(\RHom_{\ca}(T,T(?,B))\ko B\in\cb)=\thick_{\cd\cb}(B^\we\ko B\in\cb)=\per\cb,
\]
for each $A\in\ca$. By Corollary~\ref{cor.preservation of thickness}, we deduce that $\RHom_{\ca}(T,M)$ is compact in $\cd\cb$ whenever $M$ is compact in $\cd\ca$.

$3)\Rightarrow 4)$ By well-known properties of adjunctions (see Lemma~\ref{equivalence between reflective and coreflective objects}), the essential image of the functor $\RHom_{\ca}(T,?):\cd\ca\ra\cd\cb$ is the full subcategory of $\cd\cb$ formed by those modules $N$ for which the unit
\[N\arr{\sim}\RHom_{\ca}(T,N\otimes^\L_{\cb}T)
\]
is an isomorphism. In particular, we have
\[B^\we\arr{\sim}\RHom_{\ca}(T,T(?,B))
\]
in $\cd\cb$. On the other hand, since the restriction of $?\otimes^\L_{\cb}T$ to $\op{im}(\RHom_{\ca}(T,?))$ yields the quasi-inverse of $\RHom_{\ca}(T,?)$ and $\RHom_{\ca}(T,A^\we)\in\per\cb=\thick_{\cd\cb}(B^\we\ko B\in\cb)$, then
\[A^\we\cong\RHom_{\ca}(T,A^\we)\otimes^\L_{\cb}T\in\thick_{\cd\ca}(B^\we\otimes^\L_{B}T\ko B\in\cb)=\thick_{\cd\ca}(T(?,B)\ko B\in\cb).
\]

$4)\Rightarrow 2)$ We just need to check that 
\[
\rho(A^\vee):A^\vee\ra\RHom_{\cb^\text{op}}(T,T\otimes_\ca^\L A^\vee)
\] 
is an isomorphism in $\cd(\ca^\text{op})$, for each $A\in\ca$. By Proposition~\ref{prop.relation between unit-counit isomorphism bis}, that is
equivalent to prove that 
\[
\tau (A^\we):A^\we\ra\RHom_{\cb^\text{op}}(\RHom_\ca(A^\we,T),T)
\] 
is an isomorphism in $\cd\ca$, for each $A\in\ca$. For that it is enough to prove that 
\[
\tau(T(?,B)):T(?,B)\ra\RHom_{\cb^\text{op}}(\RHom_\ca(T(?,B),T),T)
\] 
is an isomorphism. But this follows from Proposition~\ref{prop.relation between unit-counit
isomorphism}.

$1)\Rightarrow 5)$ Note that 
\[
\RHom_{\cb^\text{op}}(T,?):\cd(\cb^\text{op})\ra\cd(\ca^\text{op})
\]
preserves small coproducts if, and only if, so does 
\[
\RHom_{\cb^\text{op}}(T(A,?),?):\cd(\cb^\text{op})\ra\cd k,
\]
for each $A\in\ca$. This is in turn equivalent to say that $T(A,?)\in\per(\cb^\text{op})$, for each $A\in\ca$. On the other hand, by the homological faithful balance of $T$, the unit map 
\[
\rho (A^\vee):A^\vee\ra\RHom_{\cb^\text{op}}(T,T\otimes_\ca^\L A^\vee)
\] 
is an isomorphism, for each $A\in\ca$. Then the symmetric of condition 5) in Theorem~\ref{teor.derived tensor fully faithful}
holds. It follows that
\[
T\otimes_\ca^\L?:\cd(\ca^\text{op})\ra\cd(\cb^\text{op})
\]
is fully faithful.

$5)\Rightarrow 1)$  By the fully faithful condition of 
\[
T\otimes_\ca^\L:\cd(\ca^\text{op})\ra\cd(\cb^\text{op}),
\]
we know that the unit map 
\[
\rho(A^\vee):A^\vee\ra\RHom_{\cb^\text{op}}(T,T\otimes_\ca^\L A^\vee)
\] 
is an isomorphism, for each $A\in\ca$. Then  $T$ is homologically faithfully balanced. That $T(A,?)\in\per(\cb^\text{op})$, for each $A\in\ca$ follows from the fact that the functor
\[
\RHom_{\cb^\text{op}}(T,?):\cd(\cb^\text{op})\ra\cd(\ca^\text{op})
\]
preserves small coproducts (see the first paragraph of the proof of the implication $1)\Rightarrow 5)$).

$1)$-$5)\Rightarrow 6)$ We claim that $?\otimes^\L_{\cb}T:\cd\cb\ra\cd\ca$ preserves small products, which, by Lemma~\ref{lemma:existence of left adjoints}, will imply that it has a left adjoint. Then Lemma~\ref{two-sided fully faithful adjoint} will finish the proof.

Indeed, if $\{X_{i}\}_{i\in I}$ is a small family of objects of $\cd\cb$, then we have a canonical morphism 
\[\varphi: (\prod_{i\in I}X_{i})\otimes^\L_{\cb}T\ra\prod_{i\in I}(X_{i}\otimes^\L_{\cb}T)
\]
in $\cd\ca$. This morphism is an isomorphism if and only if for each $A\in\ca$ the cochain map
\[
\xymatrix{
(\prod_{i\in I}X_{i})\otimes^\L_{\cb}T(A,?)\ar[r]^{\varphi(A)} & \prod_{i\in I}(X_{i}\otimes^\L_{\cb}T(A,?))
}
\]
is an isomorphism in $\cd k$. Consider the full subcategory $\ct$ of $\cd(\cb^\text{op})$ formed by those objects $N$ such that the canonical morphism
\[(\prod_{i\in I}X_{i})\otimes^\L_{\cb}N\ra\prod_{i\in I}(X_{i}\otimes^\L_{\cb}N)
\]
is an isomorphism in $\cd k$. Then $\ct$ is closed under shifts, extensions, direct summands, and contains $B^\che$ for each $B\in\cb$. It follows that $\per(\cb^\text{op})$ is contained in $\ct$. By assertion 1) of the theorem, we know that for each $A\in\ca$ we have $T(A,?)\in\per(\cb^\text{op})$, and so $T(A,?)\in\ct$. That is to say, for each $A\in\ca$ the map $\varphi(A)$ is an isomorphism as desired.

$6)\Rightarrow 4)$ Let us denote by $L:\cd\ca\ra\cd\cb$ the left adjoint of $?\otimes^\L_{\cb}T$. It is easy to check that $L$ preserves compact objects. This, together with the fact that the unit of the adjunction $(L,?\otimes^\L_{\cb}T)$ is an isomorphism, implies that each $A^\we$ is in the essential image of $\per\cb$ by the functor $?\otimes^\L_{\cb}T$. After Corollary~\ref{cor.preservation of thickness}, this implies that each $A^\we$ is in $\thick_{\cd\ca}(T(?,B)\ko B\in\cb)$.

$7)\Rightarrow 6)$ Clear.

$1)$-$6)\Rightarrow 7)$ We will take $T'$ to be $T^\ast$ (see Notation~\ref{B-dual}). Since $T(A,?)\in\per(\cb^\text{op})$ for each $A\in\ca$, then $T^\ast\otimes^\L_{\cb}?\cong\RHom_{\cb^\text{op}}(T,?)$ as triangle functors from $\cd(\cb^\text{op})\ra\cd(\ca^\text{op})$ (see Lemma~\ref{B-duality and isos}). Then $(T\otimes^\L_{\ca}?,T^\ast\otimes^\L_{\cb}?)$ is an adjoint pair whose left component is fully faithful. Moreover, we know the map $\sigma:T\ra T^{\ast\ast}$ is an isomorphism in $\cd(\cb^\text{op}\otimes\ca)$ (see Lemma~\ref{B-duality and isos}). We then get that
$?\otimes^\L_{\cb}T\cong ?\otimes^\L_{\cb}T^{\ast\ast}\cong\RHom_{\cb}(T^\ast,?)$ (see Lemma~\ref{B-duality and isos}) as triangle functors from $\cd\cb$ to $\cd\ca$. It follows that $(?\otimes^\L_{\ca}T^\ast,?\otimes^\L_{\cb}T)$ is an adjoint pair whose left component is fully faithful.
\end{proof}

\begin{lemma}\label{lemma:existence of left adjoints}
Let $\cd$ be a compactly generated triangulated category and let $F:\cd\ra\ct$ be a triangle functor. If $F$ preserves small products (resp. coproducts), then it has a left (resp. right) adjoint.
\end{lemma}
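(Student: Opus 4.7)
The plan is to deduce both statements from (suitable forms of) Brown representability, one for $\cd$ and one for $\cd^{\text{op}}$.

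For the coproduct case (right adjoint): fix $X\in\ct$ and consider the contravariant functor
\[
H_X=\Hom_\ct(F(-),X):\cd^{\text{op}}\ra\Mod\Z.
\]
Because $F$ is a triangle functor, $H_X$ is cohomological, and because $F$ preserves small coproducts, $H_X$ sends coproducts in $\cd$ to products in $\Mod\Z$. Since $\cd$ is compactly generated, the classical Brown representability theorem (as in \cite{Krause}) provides an object $R(X)\in\cd$ together with a natural isomorphism $H_X(D)\cong\Hom_\cd(D,R(X))$. Sending $X\mapsto R(X)$ and using the naturality in $X$ of these isomorphisms makes $R$ into a functor which is by construction right adjoint to $F$.

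For the product case (left adjoint): I would run the dual of the above argument. Fix $X\in\ct$ and consider the covariant cohomological functor
\[
H^X=\Hom_\ct(X,F(-)):\cd\ra\Mod\Z,
\]
which, by the hypothesis that $F$ preserves small products, sends products in $\cd$ to products in $\Mod\Z$. To represent $H^X$ as $\Hom_\cd(L(X),-)$, we need Brown representability for $\cd^{\text{op}}$. The key input is that compactly generated triangulated categories admit a symmetric small set of generators (Remark~\ref{compactly implies symmetrically}). As already used in the proof of Corollary~\ref{smashing in bijection with cosmashing}, this together with \cite[Theorem A]{Krause} implies that $\cd$ has small products and that $\cd^{\text{op}}$ has a set of perfect generators, so Brown representability is available in $\cd^{\text{op}}$. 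Applying it to $H^X$ (viewed as a contravariant functor $\cd^{\text{op}}\ra\Mod\Z$ that turns coproducts of $\cd^{\text{op}}$, i.e.\ products of $\cd$, into products) gives the object $L(X)\in\cd$. Functoriality of $L$ and the adjunction $(L,F)$ follow from the naturality of the representing isomorphisms in $X$, exactly as in the coproduct case.

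The essentially new content is only in the product case; the main obstacle there is to verify that the compact generation hypothesis really does supply Brown representability for $\cd^{\text{op}}$. This is where Remark~\ref{compactly implies symmetrically} plus Krause's theorem is used, and it is precisely the same argument already invoked within the proof of Corollary~\ref{smashing in bijection with cosmashing}; once this input is granted, everything reduces to applying representability and invoking the standard dictionary between representability of $\Hom_\ct(X,F(-))$ and the existence of a left adjoint.
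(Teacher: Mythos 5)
Your proposal is correct and follows essentially the same route as the paper: for the left adjoint the paper also represents the contravariant functor $\Hom_{\ct}(X,F?)$ on $\cd^{\text{op}}$ by passing from compact generation to a symmetric generating set, concluding that $\cd^{\text{op}}$ is perfectly generated and so satisfies Brown representability by Krause's Theorem A. The coproduct case, which you handle by classical Brown representability for $\cd$, is left implicit in the paper but is the standard dual argument.
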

\begin{proof}
Given $X\in\ct$ we consider the functor
\[\Hom_{\ct}(X,F?):\cd^\text{op}\ra\Mod k,
\]
which takes triangles to exact sequences« and small coproducts to small products. The fact that $\cd$ is compactly generated implies that it has a set of symmetric generators (see Definition~\ref{def:symmetric generators}). But then so has $\cd^\text{op}$. In particular, $\cd^\text{op}$ is perfectly generated (see Definition 1 of \cite{Krause}) and so it satisfies Brown representability (see Theorem A of \cite{Krause}). Then
\[\Hom_{\ct}(X,F?)\cong\Hom_{\cd}(LX,?)
\]
for some $LX\in\cd$. The assignment $X\mapsto LX$ yields a functor which is clearly left adjoint to $F$.
\end{proof}

The following result is related to \cite[Proposition 2.6]{BazzoniManteseTonolo}.

\begin{corollary} \label{rem.smashing and BMT}
In the situation of Theorem~\ref{special localization result}, $\cn=\ker(?\otimes_\cb^\L T)$ is a localizing subcategory of $\cd\cb$ such that
$\cd\ca\cong\cd\cb/\cn$ (see Corollary~\ref{cor.RHom fully faithful with no extras}). It turns out that the following conditions are equivalent:
\begin{itemize}
\item[1)] $\cn$ is smashing. 
\item[2)] $\cn=0$. 
\item[3)] The functors $\RHom_\ca(T,?)$ and $?\otimes^\L_\cb T$ induce mutually quasi-inverse triangle equivalences between $\cd\ca$ and $\cd\cb$.
\end{itemize}
\end{corollary}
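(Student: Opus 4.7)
The plan is to prove the cycle $3)\Rightarrow 2)\Rightarrow 1)\Rightarrow 3)$, with the substantive content concentrated in the implication $1)\Rightarrow 2)$.

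The implication $3)\Rightarrow 2)$ is immediate: if $?\otimes^\L_\cb T$ is an equivalence, then $\cn=\ker(?\otimes^\L_\cb T)=0$, which is trivially smashing. For $2)\Rightarrow 3)$, assume $\cn=0$. Corollary~\ref{cor.RHom fully faithful with no extras} tells us that $?\otimes^\L_\cb T$ induces a triangle equivalence $\cd\cb/\cn\arr{\sim}\cd\ca$, and with $\cn=0$ this is just $?\otimes^\L_\cb T:\cd\cb\arr{\sim}\cd\ca$. Its right adjoint $\RHom_\ca(T,?)$ is then a quasi-inverse.

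The heart of the argument is $1)\Rightarrow 2)$. Suppose $\cn$ is smashing. By Remark~\ref{rem.RHom fully faithful and smashing} the right adjoint $\RHom_\ca(T,?):\cd\ca\ra\cd\cb$ then preserves small coproducts, and by Theorem~\ref{special localization result} (which applies since we are in its setup) it is already fully faithful. I claim its essential image $\ce$ is a localizing subcategory of $\cd\cb$ that coincides with all of $\cd\cb$. Closure of $\ce$ under shifts and small coproducts is automatic from the fact that $\RHom_\ca(T,?)$ is a triangle functor preserving coproducts. To see $\ce$ is closed under extensions (hence triangulated), take a triangle $\RHom_\ca(T,M)\ra X\ra\RHom_\ca(T,M')\ra\Sigma\RHom_\ca(T,M)$ in $\cd\cb$; the connecting morphism is, by fully faithfulness, the image of a morphism $M\ra\Sigma M'$ in $\cd\ca$, which can be completed to a triangle in $\cd\ca$ whose image (using the triangle functor structure) realizes $X\cong\RHom_\ca(T,M'')$. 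Thus $\ce$ is localizing.

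Now $\ce$ contains $B^\we$ for every $B\in\cb$: the unit $\lambda(B^\we):B^\we\ra\RHom_\ca(T,T(?,B))$ is an isomorphism by homological faithful balance of $T$ (part of the standing hypothesis of Theorem~\ref{special localization result}). Since $\cd\cb=\Tria_{\cd\cb}(B^\we\ko B\in\cb)$, we conclude $\ce=\cd\cb$, so $\RHom_\ca(T,?)$ is an equivalence of triangulated categories. Its left adjoint $?\otimes^\L_\cb T$ is then a quasi-inverse, and in particular $\cn=\ker(?\otimes^\L_\cb T)=0$. The only step that is not bookkeeping is verifying that the essential image of $\RHom_\ca(T,?)$ is a localizing subcategory; the rest reduces to applying Theorem~\ref{special localization result} and the fact that the free modules $B^\we$ generate $\cd\cb$ as a localizing subcategory.
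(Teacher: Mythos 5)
Your proof is correct and follows essentially the same route as the paper: both arguments reduce the smashing hypothesis to the statement that the essential image of $\RHom_\ca(T,?)$ is closed under small coproducts, and then conclude that this image must be all of $\cd\cb$ because it contains the generators $B^\we$ (the paper packages this via the semi-orthogonal decomposition $(\cn,\op{im}\RHom_\ca(T,?))$, you verify directly that the image is localizing). Only a cosmetic slip: in your extension-closure argument the connecting morphism corresponds to a map $M'\ra\Sigma M$, not $M\ra\Sigma M'$.
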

\begin{proof}
The pair $(\cn,\op{im}(\RHom_\ca(T,?)))$ is a semi-orthogonal decomposition of $\cd\cb$. Indeed, with an argument symmetric to that in the proof of the final
statement of Theorem~\ref{teor.derived tensor fully faithful}, we just need to check that $\cn=\ ^\perp(\op{im}(\RHom_\ca(T,)))$. But we have the following
sequence of implications, for each $Y$ in $\cd\cb$:
\begin{align}
Y\in\ ^\perp(\op{im}(\RHom_\ca(T,))) \text{ if and only if } \nonumber \\
\RHom_\cb(Y,\RHom_\ca(T,?))\cong\RHom_\ca(Y\otimes_\cb^\L T,?)=0  \text{ if and only if}  \nonumber \\
Y\otimes_\cb^\L T=0 \text{ if and only if }  \nonumber \\
Y\in\cn . \nonumber
\end{align}
To say that $\cn$ is smashing is then equivalent to say that $\op{im}(\RHom_\ca(T,))$ is (a triangulated subcategory) closed under taking small coproducts in $\cd\cb$. But this is equivalent to say that $\op{im}(\RHom_\ca(T,))=\cd\cb$ since $B^\we\in\op{im}(\RHom_\ca(T,))$, for all $B\in\cb$.
\end{proof}

\begin{corollary}\label{generalized tilting and recollements}
Let $\ca$ and $\cb$ be dg categories and let $T$ be a dg $\cb$-$\ca$-bimodule such that $\lambda(B^\we):B^\we\ra\RHom_\ca(T,T(?,B))$ is an
isomorphism in $\cd\cb$, for each $B\in\cb$. Consider the following assertions:
\begin{itemize}
\item[1)] $A^\we\in\thick_{\cd\ca}(T(?,B)\ko B\in\cb)$, for all $A\in\ca$. 
\item[2)] There is a dg category $\cc$ and  a recollement of triangulated categories 
\[\xymatrix{
\cd\ca\ar@/_2.5pc/[d]\ar@/_-2.5pc/[d] \\
\cd\cb\ar[u]^{j^*}\ar@/_2.5pc/[d]\ar@/_-2.5pc/[d] \\
\cd\cc\ar[u]^{i_{*}}
}
\]
such that $j^*$ is naturally isomorphic to $?\otimes^\L_\cb T$. 
\item[3)] There is a bijective on objects homological epimorphism $F:\cb\ra\cc$ of dg categories together with a recollement as above, such that $j^*\cong ?\otimes^\L_\cb T$ and $i_{*}$ is the restriction of scalars along $F$.
\end{itemize}
Assertions $1)$ and $2)$ are equivalent and implied by $3)$. Moreover, if $\cb$ is $k$-flat, the three assertions are equivalent.
\end{corollary}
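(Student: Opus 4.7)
The plan is to reduce everything to Theorem~\ref{special localization result} combined with the bijection between triangulated TTF triples and equivalence classes of d\'ecollements recalled in Section~\ref{Basics of categories}. First I would observe that under the standing hypothesis that $\lambda(B^\we)$ is an isomorphism for every $B\in\cb$, assertion $1)$ of the corollary coincides with condition $4)$ of Theorem~\ref{special localization result}; hence $1)$ forces all of conditions $1)$--$6)$ of that theorem, and in particular $j^*:=?\otimes_\cb^\L T$ then admits both a fully faithful right adjoint $j_*:=\RHom_\ca(T,?)$ (condition $3)$) and a fully faithful left adjoint $j_!$ (condition $6)$).

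Given these adjoints, Lemma~\ref{two-sided fully faithful adjoint} applied to $j^*$ delivers a triangulated TTF triple $(\op{im}(j_!),\ker(j^*),\op{im}(j_*))$ in $\cd\cb$, and the TTF-triple/d\'ecollement correspondence of Section~\ref{Basics of categories} then produces a recollement of $\cd\cb$ whose upper functor is $j^*$ (so that the upper piece is $\cd\ca$ via $j^*$) and whose lower piece is $\cn:=\ker(j^*)$, embedded via the canonical inclusion $i_*$. To finish $1)\Rightarrow 2)$ I would identify $\cn$ with the derived category of a small dg category: since $\cd\cb$ is compactly generated by the representables $\{B^\we\}_{B\in\cb}$ and the left adjoint $i^*$ of $i_*$ preserves compactness (because $\cn=\ker(j^*)$ is a localizing subcategory, so the inclusion $i_*$ preserves small coproducts), the family $\{i^*(B^\we)\}_{B\in\cb}$ is a small set of compact generators of $\cn$, and Keller's enhancement theorem then yields $\cn\simeq\cd\cc$ for a suitable small dg category $\cc$.

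For the converse $2)\Rightarrow 1)$, a recollement as in $2)$ endows $j^*=?\otimes_\cb^\L T$ with a fully faithful left adjoint, so together with the standing hypothesis this amounts to condition $6)$ of Theorem~\ref{special localization result}; by the equivalence of conditions $1)$--$6)$ there, condition $4)$ holds, which under the hypothesis is precisely assertion $1)$. The implication $3)\Rightarrow 2)$ is immediate.

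The final implication $1),2)\Rightarrow 3)$ under the hypothesis that $\cb$ is $k$-flat is where I expect the main obstacle to lie. Here I would take $\cc$ to have the same objects as $\cb$ and morphism complexes $\cc(B,B'):=\RHom_{\cd\cb}(i^*(B^\we),i^*(B'^\we))$ (realised strictly by fixing $\ch$-injective resolutions of the $i^*(B^\we)$), together with the structural dg functor $F:\cb\ra\cc$ that is the identity on objects and sends a morphism $f:B\ra B'$ of $\cb$ to the induced morphism $i^*(f^\we)$ in $\cn$. Restriction of scalars along $F$ should then coincide with the fully faithful inclusion $i_*:\cn\ra\cd\cb$, and the characterisation $\cc\otimes_\cb^\L\cc\iso\cc$ of homological epimorphisms of dg categories will follow. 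The $k$-flatness of $\cb$ is what guarantees that the derived bimodule tensor product $\cc\otimes_\cb^\L\cc$ can be computed via its naive underived counterpart and, more importantly, that the abstract triangle equivalence $\cn\simeq\cd\cc$ of the previous step materialises as the strict restriction-of-scalars functor along an honest dg functor $F$.
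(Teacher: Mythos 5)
Your argument follows the paper's proof essentially step for step: under the standing hypothesis, assertion 1) is condition 4) of Theorem~\ref{special localization result}, conditions 4) and 6) of that theorem supply the fully faithful left and right adjoints of $?\otimes^\L_\cb T$, Lemma~\ref{two-sided fully faithful adjoint} together with the TTF-triple/d\'ecollement correspondence yields the recollement, and the kernel is identified with $\cd\cc$ via the compact generators $i^*(B^\we)$ and Keller's theorem, exactly as in the paper. The only divergence is in $1),2)\Rightarrow 3)$, where the paper simply invokes \cite[Theorem 4]{NicolasSaorinParametrizing} while you sketch (admittedly without full detail) the construction underlying that cited result; nothing in your sketch is wrong, but the verifications that $F$ is a homological epimorphism and that $i_*$ is restriction of scalars along $F$ are precisely what that reference supplies.
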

\begin{proof}
$3)\Rightarrow 2)$ Clear.

$2)\Rightarrow 1)$  From the hypothesis one gets assertion 6) of Theorem~\ref{special localization result}. Then the implication is also clear.

$1)\Rightarrow 2)$ Assertions 4 and 6 of Theorem~\ref{special localization result} hold. Hence, $?\otimes^\L_\cb T$ has a fully faithful left adjoint $L$
and a fully faithful right adjoint $\RHom_\ca(T,?)$. Then we know that $(\op{im}(L),\ker(?\otimes^\L_\cb T),\op{im}(\RHom_\ca(T,?)))$ is
a triangulated TTF triple in $\cd\cb$ (see Lemma~\ref{two-sided fully faithful adjoint}).

The top part of the desired recollement is clear using \cite[\S 2.1]{NicolasSaorinParametrizing} and the fact that $L$ induces an equivalence of categories
$\cd\ca\arr{\sim}\op{im}(L)$. Then the right adjoint to the composition $L:\cd\ca\arr{\sim}\op{im}(L)\hookrightarrow\cd\cb$ is precisely 
$?\otimes^\L_\cb T$.

The existence of the dg category $\cc$  follows from \cite[Theorem 4.3]{KellerDDC}, because the central class $\cy$ of a triangulated TTF triple
$(\cx,\cy,\cz)$ on an algebraic compactly generated triangulated category $\cd$ with small coproducts is always an algebraic compactly generated triangulated category. Indeed, if $\tau^\cy$ is the left adjoint to the inclusion $\cy\ra\cd$ and $\cs$ is a set of compact generators of $\cd$, then 
$\{\tau^\cy(S)\}_{S\in\cs}$ is a family of compact generators of $\cy$.

$1)$-$2)\Rightarrow 3)$ Use \cite[Theorem 4]{NicolasSaorinParametrizing}.
\end{proof}

\begin{remark}
Implication $1)\Rightarrow 2)$ of Corollary~\ref{generalized tilting and recollements} partially recovers the main result of \cite{Yang}. Therein, a ground field $k$ is assumed, and starting from a `generalized tilting' right $\ca$-module $T$, the category $\cd\cb$ is proved to be a recollement of derived categories in which all the functors are expressed in terms of $T$. In our case, and without assuming any kind of flatness, we are able to find a recollement starting from a `generalized tilting' $\ca$-module. It is to express the functors involved in terms of $T$ that we use a certain flatness.
\end{remark}

\begin{proposition}\label{generalized tilting and cosmashing}
Let $\cd$ be a compactly generated algebraic triangulated category and let $\cb$ be a small dg category such that $\cd$ is equivalent to $\cd\cb$. The
following assertions hold:
\begin{itemize}
\item[1)] If $\ca$ is a small dg category and $T$ is a dg $\cb$-$\ca$-bimodule satisfying some of the conditions 1) - 6) of Theorem~\ref{special localization result}, then:
\begin{itemize}
\item[1.1)] $\cz=\op{im}\RHom_\ca(T,?)$ is a compactly generated co-smashing subcategory of $\cd$ which contains
all the compact objects. 
\item[1.2)] The restrictions of $\RHom_\ca(T,?)$ and the left adjoint $L$ of $?\otimes^\L_{\cb}T$ to the subcategory $\per\ca$ of compact objects are naturally isomorphic.
\end{itemize}
\item[2)] If $\cz$ is a compactly generated co-smashing subcategory of $\cd$ which contains the compact objects, then there is a small ($k$-flat) dg category $\ca$ and a dg $\cb$-$\ca$-bimodule $T$ satisfying the conditions of Theorem~\ref{special localization result}, such that $\cz=\op{im}\RHom_\ca(T,?)$.
\end{itemize}
\end{proposition}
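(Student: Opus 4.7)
My plan for (1) is to derive both statements from the TTF-triple structure produced by Theorem~\ref{special localization result}. I would first invoke condition 7) of that theorem to secure a fully faithful left adjoint $L$ of $?\otimes^\L_\cb T$ alongside the fully faithful right adjoint $\RHom_\ca(T,?)$, and then apply Lemma~\ref{two-sided fully faithful adjoint} to obtain a TTF triple $(\op{im}(L),\ker(?\otimes^\L_\cb T),\cz)$ on $\cd\cb\simeq\cd$, with $\cz=\op{im}\RHom_\ca(T,?)$ automatically co-smashing. For 1.1), I would read off compact generation of $\cz$ from the triangle equivalence $\cd\ca\simeq\cz$ induced by $\RHom_\ca(T,?)$, and the containment $\per\cb\subseteq\cz$ from condition 3) (each $B^\we$ lies in the essential image and $\cz$ is thick). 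For 1.2), the key observation is that for $M\in\per\ca$ the object $L(M)$ is compact in $\cd\cb$ (since $L$, being left adjoint to the coproduct-preserving $?\otimes^\L_\cb T$, preserves compactness), hence lies in $\per\cb\subseteq\cz$ by 1.1); combining this with the isomorphism $L(M)\otimes^\L_\cb T\cong M$ that expresses fully faithfulness of $L$, the unit $\lambda$ of the adjunction $(?\otimes^\L_\cb T,\RHom_\ca(T,?))$---which is an isomorphism exactly on objects of $\cz$---supplies the desired natural isomorphism
\[
L(M)\xrightarrow{\lambda(L(M))}\RHom_\ca(T,L(M)\otimes^\L_\cb T)\xrightarrow{\sim}\RHom_\ca(T,M).
\]

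For (2), I would reverse the process. Starting from $\cz$ compactly generated co-smashing and containing the compact objects, I would first extract a TTF triple $(\cx,\cy,\cz)$ on $\cd\cb$ via the bijection with co-smashing subcategories, obtaining the left adjoint $\tau^\cz:\cd\cb\to\cz$ of the inclusion. Since $\cz$ is a compactly generated algebraic triangulated category, Keller's representation theorem \cite[Theorem 4.3]{KellerDDC} then supplies a small $k$-flat dg category $\ca$ together with a triangle equivalence $F:\cd\ca\overset{\sim}{\to}\cz$. The composition $F^{-1}\circ\tau^\cz:\cd\cb\to\cd\ca$ preserves small coproducts, so a second application of Keller's theorem represents it as $?\otimes^\L_\cb T$ for some dg $\cb$-$\ca$-bimodule $T$; its right adjoint is then $\RHom_\ca(T,?)$, equal to the composition $\cd\ca\overset{F}{\to}\cz\hookrightarrow\cd\cb$, so that $\op{im}\RHom_\ca(T,?)=\cz$. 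To close the argument I would verify condition 6) of Theorem~\ref{special localization result}: the unit $\lambda(B^\we)$ is an isomorphism because $B^\we\in\per\cb\subseteq\cz=\op{im}\RHom_\ca(T,?)$, and $?\otimes^\L_\cb T=F^{-1}\circ\tau^\cz$ preserves small products (as $\tau^\cz$ does by the co-smashing hypothesis), so Lemma~\ref{lemma:existence of left adjoints} furnishes a left adjoint, which is fully faithful by Lemma~\ref{two-sided fully faithful adjoint} applied to the fully faithful right adjoint $\RHom_\ca(T,?)$.

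The hard part will be the double invocation of Keller's representation theorem in part (2): first producing a $k$-flat dg-category model $\ca$ for the compactly generated triangulated category $\cz$, and second representing the coproduct-preserving composite $F^{-1}\circ\tau^\cz$ as $?\otimes^\L_\cb T$ with the property that its $\RHom$-adjoint matches the original embedding $\cz\hookrightarrow\cd\cb$ up to a canonical natural isomorphism rather than merely an abstract one.
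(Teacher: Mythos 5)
Your part 1) is essentially sound. For 1.1) you follow the paper (though you should cite condition 6) of Theorem~\ref{special localization result}, which directly supplies the fully faithful left adjoint $L$; condition 7) is only known to follow from 1)--6) when $\ca$ is $k$-projective, which is not assumed here). Your argument for 1.2) is genuinely different from the paper's and is correct: the paper defines $f(M)=\xi^{-1}(\delta(M)^{-1})$ globally and verifies it is an isomorphism on each $A^\we$ by a Yoneda computation against shifted representables, whereas you observe that $L$ preserves compacts, that $\per\cb\subseteq\cz$ forces $\lambda(L(M))$ to be invertible for $M\in\per\ca$, and compose with the (inverse of the) unit of $(L,?\otimes^\L_\cb T)$. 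That is shorter and avoids the explicit adjunction bookkeeping.

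Part 2), however, has a genuine gap at the step ``a second application of Keller's theorem represents $F^{-1}\circ\tau^{\cz}$ as $?\otimes^\L_\cb T$.'' There is no off-the-shelf theorem asserting that an abstract coproduct-preserving triangle functor $\cd\cb\ra\cd\ca$ is isomorphic to a derived tensor functor for some dg $\cb$-$\ca$-bimodule: producing the bimodule requires lifting the functor to the dg (or Frobenius-model) level, and \cite[Theorem 4.3]{KellerDDC} only identifies a compactly generated algebraic triangulated category with some $\cd\ca$ -- it does not represent a given functor. You flag this yourself as ``the hard part'' but do not resolve it, and as written the construction of $T$ is missing. The paper sidesteps exactly this difficulty by working with the \emph{left} component $\cx$ of the TTF triple: $\cx$ is generated by a set of objects that are compact in $\cd\cb$ (since $\cx^c=\cx\cap\cd^c$), so \cite[Corollary 2.5]{NicolasSaorinParametrizing} produces a small dg category $\ca$ and a dg $\ca$-$\cb$-bimodule $\tilde{T}$ with $?\otimes^\L_\ca\tilde{T}$ fully faithful and of image $\cx$; after replacing $\ca$ by a $k$-projective model via \cite{Tabuada2005a}, one sets $T=\RHom_{\cb}(\tilde{T},\cb)$ and uses Proposition~\ref{B-duality and isos} to identify $?\otimes^\L_\cb T$ with the right adjoint $\RHom_\cb(\tilde{T},?)$ of $?\otimes^\L_\ca\tilde{T}$, whence $\cz=\op{im}\RHom_\ca(T,?)$ and condition 6) of Theorem~\ref{special localization result} follows. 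To repair your argument you would either have to reproduce this dualization step or supply an actual dg lift of $F^{-1}\circ\tau^{\cz}$; the remainder of your part 2) (verifying $\lambda(B^\we)$ invertible and producing the fully faithful left adjoint from preservation of products) is fine once $T$ exists.
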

\begin{proof}
1.1) We have seen in the proof of Theorem~\ref{special localization result} that the triple 
$(\op{im}(L),\ker(?\otimes^\L_\cb T,\op{im}\RHom_\ca(T,?))$ is a triangulated TTF triple in $\cd$, so that $\cz=\op{im}\RHom_\ca(T,?)$ is a co-smashing
subcategory. Moreover, the objects $L(A^\we)\ko A\in\ca$ are all compact and generate $\op{im}(L)$ as a triangulated category. Since $\cd\ca\simeq\op{im}(L)\cong\cz$, we get  that $\cz$ is compactly generated. By condition 3 of Theorem~\ref{special localization result}, $\cz$ contains the all representable $\cb$-modules $B^\we\ko B\in\cb$. Then $\cz$ contains $\thick_{\cd\cb}(B^\we\ko B\in\cb)=\per\cb$.

1.2) Let $\xi: \Hom_{\cd\cb}(L(?),?)\arr{\sim}\Hom_{\cd\ca}(?,?\otimes_\cb^\L T)$ be the adjunction isomorphism. We still denote by $\xi$ the map $\xi
(M,X):\Hom_{\cd\cb}(L(M),X)\arr{\sim}\Hom_{\cd\ca}(M,X\otimes^\L_\cb T)$, hoping that no confusion appears. In particular, for each $M\in\cd\ca$,  we have an isomorphism of $k$-modules
\[
\xi:\Hom_{\cd\cb}(L(M),\RHom_\ca(T,M))\arr{\sim}\Hom_{\cd\ca}(M,\RHom_\ca(T,M)\otimes^\L_\cb T).
\]
Due to the fully faithful condition of $\RHom_\ca(T,?)$, we know that $\delta: \RHom_\ca(T,?)\otimes^\L_\cb T\ra\id_{\cd\ca}$ is an isomorphism. We put
$f(M)=\xi^{-1}(\delta (M)^{-1})$, which is a morphism $L(M)\ra\RHom_\ca(T,M)$. It is easy to see that the maps $f(M)$ define a morphism of triangle functors $f:L\ra\RHom_\ca(T,?)$. Then, by Corollary~\ref{cor:point-stable form a thick subcategory}, we know that the full subcategory formed by those objects $M$ in $\cd\ca$ for which $f(M)$ is an isomorphism is a thick subcategory. Our task is hence reduced to prove that $f(A^\we)$ is an isomorphism, for each $A\in\ca$.

We shall prove that 
\[
f(A^\we)^*=\Hom_{\cd\ca}(f(A^\we),X):\Hom_{\cd\cb}(\RHom_\ca(T,A^\we),X)\ra\Hom_{\cd\cb}(L(A^\we),X)
\]
is an isomorphism, for each compact object $X$ of $\cd$. Bearing in mind that $L(A^\we)$ and $\Hom_\ca(T,A^\we)$ are both compact, Yoneda's lemma will give then that $f(A^\we)$ is an isomoprhism.

Since the full subcategory consisting of objects $X$ of $\cd$ such that $f(A^\we)^{*}=\Hom_{\cd\ca}(f(A^\we),\Sigma^p X)$ is an isomorphism,
for all $p\in\Z$, is a thick subcategory, we can assume without loss of generality that $X=\Sigma^{p}(B^\we)$ is a shift of a representable object. For simplicity, put $H=\RHom_\ca(T,?)$ and $G=?\otimes^\L_\cb T$. Consider the square:
\[
\xymatrix{
\Hom_{\cd\cb}(H(A^\we),HG(\Sigma^pB^\we))\ar[rr]^{\lambda (\Sigma^pB^\we)_{*}^{-1}} && \Hom_{\cd\cb}(H(A^\we),\Sigma^pB^\we)\ar[d]^{f(A^\we)^*} \\ 
\Hom_{\cd\ca}(A^\we,G(\Sigma^p B^\we))\ar[u]^{H} & & \Hom_{\cd\cb}(L(A^\we),\Sigma^pB^\we)\ar[ll]^{\xi}.
}
\]
Note that all the arrows in this diagram are isomorphisms, except possibly $f(A^\we)^*$. Moreover, looking at the explicit
definition of the maps involved, one can check that the composition of the arrows of the square gives the identity. Indeed, put
$\Psi (u):=\xi (\lambda(\Sigma^pB^\wedge)^{-1}\circ H(u)\circ f(A^\wedge ))$, for each $u\in\Hom_{\cd\ca}(A^\wedge ,G(\Sigma^pB^\wedge))$. Put then $\alpha :=\lambda (\Sigma^pB^\wedge)^{-1}\circ H(u)$, so that $\Psi
(u)=(\xi\circ\alpha_*)(f(A^\wedge ))$, where $\alpha_*:=\Hom_{\cd\cb}(L(A^\wedge ),\alpha )$. The naturality of $\xi$ gives that
$G(\alpha )_*\circ\xi=\xi\circ\alpha_*$. Then we get an equality:
\begin{center}
$\Psi(u)=(\xi\circ\alpha_*)(f(A^\wedge))=(G(\alpha )_*\circ\xi )(f(A^\wedge))=G(\alpha )\circ\xi (f(A^\wedge))=G(\lambda (\Sigma^pB^\wedge
)^{-1}\circ H(u))\circ\delta (A^\wedge )^{-1}=G(\lambda (\Sigma^pB^\wedge))^{-1}\circ GH(u)\circ\delta (A^\wedge )^{-1}$.
\end{center}
But the fact that $\delta$ is an isomorphism and the  equations of the adjunction $(G,H)$ give that $G(\lambda (\Sigma^pB^\wedge
))^{-1}=\delta (G(\Sigma^pB^\wedge))$. It follows that $\Psi (u)=\delta (G(\Sigma^pB^\wedge))\circ GH(u)\circ\delta (A^\wedge
)^{-1}=u$, using the naturality of $\delta$. Then $\Psi$ is the identity map. As a consequence, $f(A^\we)^*$ is an isomorphism.

2) By Corollary~\ref{smashing in bijection with cosmashing} and its proof, we have a triangulated TTF triple $(\cx,\cy,\cz)$ on $\cd$ where $\cx$ is a compactly generated triangulated category. Then the composition 
\[
\cx\arr{i_\cx}\cd\arr{\tau^\cz}\cz
\]
(where $i_{\cx}$ is the inclusion functor and $\tau^\cz$ is a left adjoint to the inclusion functor) is an equivalence of categories,
which induces by restriction another one 
\[
\cx^c\arr{\sim}\cz^c
\] 
between the corresponding subcategories of the compact objects. Note
that $\mathcal{X}^c=\mathcal{X}\cap\mathcal{D}^c$. But the fact that
$\mathcal{D}^c\subset\mathcal{Z}$ then implies that
$\tau^\mathcal{Z} (i_\mathcal{X}(X))=X$, for each
$X\in\mathcal{X}^c$. It follows that
$\mathcal{Z}^c=\mathcal{X}\cap\mathcal{D}^c$.

Let $\cp$ be a set of compact generators of $\cx$. Then, by \cite[Corollary 2.5]{NicolasSaorinParametrizing}, there is a small dg category $\ca$ together with a dg $\ca$-$\cb$-bimodule $\tilde{T}$ such that 
\[
?\otimes^\L_\ca\tilde{T}:\cd\ca\ra\cd\cb
\]
is fully faithful and its essential image is $\cx$. Note that  $A^\we\otimes^\L_\ca\tilde{T}=\tilde{T}(?,A)$ is then compact in $\cd\cb$, for each
$A\in\ca$.

On the other hand, after \cite{Tabuada2005a}, we can consider a quasi-isomorphism of dg categories 
$f:\hat{\ca}\ra\ca$, where $\hat{\ca}$ is $k$-projective. It is clear that we can replace $\ca$ by $\hat{\ca}$ and $\tilde{X}$ by $f_*(\tilde{X})$, where
$f_*:\cc_{dg}(\ca^\text{op}\otimes\cb)\ra\cc_{dg}(\hat{\ca}^\text{op}\otimes\cb)$ is the restriction of scalars. So, without loss of generality, we assume that $\mathcal{A}$ is $k$-projective. Then we define $T:=\RHom_\cb(\tilde{T},\cb)$. By Lemma~\ref{B-duality and isos}, we know that $\RHom_\cb(\tilde{T},?)$ and $?\otimes^\L_\cb T$ are naturally isomorphic triangulated functors $\cd\cb\ra\cd\ca$. It follows that $?\otimes^\L_\cb T$ has a fully faithful left
adjoint $?\otimes^\L_\ca\tilde{T}$ and a necessarily fully faithful (see Lemma~\ref{two-sided fully faithful adjoint}) right adjoint $\RHom_\ca(T,?)$. It follows that
\[
(\op{im}(?\otimes^\L_\ca\tilde{T}),\ker(?\otimes^\L_\cb T),\op{im}(\RHom_\ca(T,?))
\] 
is a triangulated TTF triple whose first component is $\cx$. Then we have $\cz=\op{im}(\RHom_\ca(T,?))$. In particular $B^\we\in\op{im}(\RHom_\ca(T,?))$, for each $B\in\cb$, and then the unit map $\lambda(B^\we):B^\we\ra\RHom_\ca(T,B^\we\otimes^\L_\cb T)$ is an isomorphism since $\RHom_\ca(T,?)$ and
$?\otimes^\L_\cb T$ are mutually inverse equivalences of categories between $\cd\ca$ and $\op{im}(\RHom_\ca(T,?))$. So $T$ satisfies condition 6 of Theorem~\ref{special localization result}.
\end{proof}

\begin{remark}\label{different but isomorphic duals}
Let $\ca$ and $\cb$ be small dg categories. Assume $\ca$ is $k$-projective. Let $T$ be a dg $\cb$-$\ca$-bimodule satisfying conditions 1)-6) of Theorem~\ref{special localization result}. After the proof of $1)$-$6)\Rightarrow 7)$ we know that the left adjoint $L$ to $?\otimes^\L_{\cb}T$ is isomorphic to $?\otimes^\L_{\ca}T^\ast$, where $T^\ast=\RHom_{\cb}(T,\cb)$ (see Notation~\ref{B-dual}). Moreover, after Proposition~\ref{generalized tilting and cosmashing}, we know that $L(A^\we)\cong\RHom_{\ca}(T,A^\we)$ for each $A\in\ca$. Now, consider the dg $\ca$-$\cb$-bimodule $\RHom_{\ca}(T,\ca)$ as we did in Notation~\ref{B-dual} but replacing the regular dg $\cb$-$\cb$-bimodule $\cb$ by the regular dg $\ca$-$\ca$-bimodule $\ca$ (see Notation~\ref{notation: regular bimodule}). Then for each $A\in\ca$ we have that
\begin{align}
\RHom_{\ca}(T,\ca)(?,A)\cong \nonumber \\
\RHom_{\ca}(T,A^\we)\cong \nonumber \\
L(A^\we)\cong \nonumber \\
A^\we\otimes^\L_{\ca}T^{\ast}\cong \nonumber \\
T^\ast(?,A)= \nonumber \\
\RHom_{\cb}(T,\cb)(?,A). \nonumber
\end{align}
We have just proved that the dg $\ca$-$\cb$-bimodules $T^\ast=\RHom_{\cb^\text{op}}(T,\cb)$ and $\RHom_{\ca}(T,\ca)$ become isomorphic in $\cd\cb$ after restricting scalars.
\end{remark}

\section{Results for ordinary algebras}\label{Results for ordinary algebras}

\subsection{The thick subcategory generated by an exceptional module}

\begin{lemma}\label{coproduct-product}
Let $\ca$ be a small dg category and let $\{M_{i}\}_{i\in I}$ be a family of right dg $\ca$-modules such that, for each integer $p\in\Z$ and each object $A\in\ca$, the set $I(p,A)$ formed by those indexes $i\in I$ such that $H^pM_{i}(A)\neq 0$ is finite. Then the canonical morphism
\[\coprod_{i\in I}M_{i}\ra\prod_{i\in I}M_{i}
\]
is a quasi-isomorphism.
\end{lemma}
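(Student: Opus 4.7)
The plan is to verify the quasi-isomorphism pointwise and reduce the problem to a statement about direct sums versus direct products of $k$-modules. Recall that both small (co)products in $\cc_{dg}\ca$ are computed objectwise, so evaluating the canonical morphism at an object $A\in\ca$ yields the usual inclusion
\[
\bigoplus_{i\in I} M_i(A)\hookrightarrow \prod_{i\in I} M_i(A)
\]
in $\cc_{dg}k$. Since a morphism in $\cc_{dg}\ca$ is a quasi-isomorphism precisely when it is so pointwise, it suffices to show that, for each fixed $A\in\ca$ and each $p\in\Z$, the induced map on $p$th cohomology is an isomorphism.

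The key ingredient is that cohomology commutes with both arbitrary direct sums and arbitrary direct products of complexes of $k$-modules. For coproducts this is standard (filtered colimits of exact sequences are exact, and a coproduct is a filtered colimit of its finite subcoproducts). For products it uses that $\Mod k$ satisfies axiom AB4$^*$: taking kernels and quotients by images commutes with arbitrary products, so products preserve cohomology. One thus obtains the natural identifications
\[
H^p\Bigl(\bigoplus_{i\in I} M_i(A)\Bigr)\cong \bigoplus_{i\in I} H^p M_i(A),\qquad H^p\Bigl(\prod_{i\in I} M_i(A)\Bigr)\cong \prod_{i\in I} H^p M_i(A),
\]
and the map induced on cohomology is precisely the canonical comparison map $\bigoplus_{i\in I} H^p M_i(A)\to \prod_{i\in I} H^p M_i(A)$.

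Now I invoke the finiteness hypothesis: for $i\notin I(p,A)$ one has $H^p M_i(A)=0$, so both sides reduce to the same finite biproduct indexed by $I(p,A)$, and the comparison map is an isomorphism. This completes the proof. There is no serious obstacle; the only point deserving a brief remark is the appeal to AB4$^*$ for $\Mod k$, which is immediate from the fact that a direct product of surjections of $k$-modules is again a surjection.
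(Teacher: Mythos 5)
Your proof is correct and follows essentially the same route as the paper's: evaluate objectwise, use that homology commutes with coproducts and products of complexes of $k$-modules (exactness of both, i.e.\ AB4 and AB4$^*$), and observe that the finiteness hypothesis makes the comparison map $\bigoplus_{i} H^pM_i(A)\ra\prod_{i}H^pM_i(A)$ a bijection. No gaps.
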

\begin{proof}
For each integer $p\in\Z$ and each object $A\in\ca$, the homology functor
\[\cc\ca\ra\Mod\Z\ko M\mapsto H^{p}M(A)
\]
preserves products and coproducts, since products and coproducts are exact in $\Mod\Z$. Then we get a map
\[\coprod_{i\in I}H^{p}M_{i}(A)\cong H^p(\coprod_{i\in I}M_{i})(A)\ra H^p(\prod_{i\in I}M_{i})(A)\cong \prod_{i\in I}H^pM_{i}(A).
\]
But the hypothesis implies that this map is bijective.
\end{proof}

\begin{proposition}\label{description thickT}
Let $A$ be an ordinary algebra and $T$ an $A$-module such that $\Ext^{p}_{A}(T,T)=0$ for all $p>0$. Then:
\begin{itemize}
\item[1)] $\thick_{\cd A}(T)$ consists of those complexes which are isomorphic in $\cd A$ to direct summands of bounded complexes of modules in $\op{sum}_{\Mod A}(T)$.
\item[2)] The quotient functor $\ch A\ra\cd A$ induces a triangle equivalence between the full subcategory of $\ch A$ formed by complexes isomorphic to bounded complexes over $\op{sum}_{\Mod A}(T)$ and $\tria_{\cd A}(T)$.
\end{itemize}
\end{proposition}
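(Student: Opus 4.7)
The plan is to prove (2) first and deduce (1) as a direct corollary. Let $\cu$ denote the full subcategory of $\ch A$ whose objects are those isomorphic in $\ch A$ to a bounded complex with all terms in $\op{sum}_{\Mod A}(T)$. Since $\op{sum}(T)$ is closed under finite coproducts, $\cu$ is closed under shifts and mapping cones, hence is a full triangulated subcategory of $\ch A$. Write $\bar{q}$ for the restriction to $\cu$ of the Verdier quotient $q:\ch A\ra\cd A$; I will show $\bar{q}$ is fully faithful with essential image $\tria_{\cd A}(T)$.

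For the essential image, the inclusion $\tria_{\cd A}(T)\subseteq\bar{q}(\cu)$ is immediate, since $T\in\cu$ and $\bar{q}(\cu)$ is a triangulated subcategory of $\cd A$ (see Corollary~\ref{cor.preservation of thickness}). The reverse inclusion is by induction on the length of a bounded complex $Y$ with terms in $\op{sum}(T)$: if $Y$ is concentrated in a single degree then $Y\cong T^{r}[-k]\in\tria_{\cd A}(T)$, and otherwise, letting $n$ be the maximal nonzero degree of $Y$, the componentwise-split short exact sequence $0\ra\sigma^{<n}Y\ra Y\ra Y^{n}[-n]\ra 0$ (stupid truncation) is a conflation in $\cc A$, producing a triangle in $\ch A$ and hence in $\cd A$ whose outer terms lie in $\tria_{\cd A}(T)$ by induction.

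For fully faithfulness, I must show that for $X,Y\in\cu$ the canonical map $\Hom_{\ch A}(X,Y)\ra\Hom_{\cd A}(X,Y)$ is bijective. This is established by double induction, first on the length of $Y$ and then on the length of $X$, using the same stupid-truncation triangles and the five lemma applied to the associated long exact Hom-sequences in both $\ch A$ and $\cd A$. The whole argument reduces to the stalk case $X=T^{r}[-\ell]$, $Y=T^{s}[-k]$, where
\[
\Hom_{\ch A}(T^{r}[-\ell],T^{s}[-k])=\begin{cases}\Hom_{A}(T^{r},T^{s}) & \ell=k,\\ 0 & \ell\neq k,\end{cases}
\]
while $\Hom_{\cd A}(T^{r}[-\ell],T^{s}[-k])=\Ext^{\ell-k}_{A}(T^{r},T^{s})$. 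These agree by the hypothesis $\Ext^{p}_{A}(T,T)=0$ for $p>0$ (together with the automatic vanishing in negative degrees and the fact that $\Ext$ distributes over finite coproducts), settling the base case.

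Combining the two steps gives a triangle equivalence $\bar{q}:\cu\arr{\sim}\tria_{\cd A}(T)$, proving (2). Assertion (1) then follows at once: by the second part of Lemma~\ref{lemma:description of tria(S) y thick(S)}, $\thick_{\cd A}(T)$ consists of the direct summands in $\cd A$ of objects of $\tria_{\cd A}(T)$, and by (2) these are precisely the direct summands in $\cd A$ of bounded complexes of modules in $\op{sum}(T)$. I expect the main technical point to be the organization of the double induction in the fully-faithful step; in particular it is essential to use the \emph{stupid} truncation rather than the canonical one, so that all intermediate complexes remain inside $\cu$ and the homotopy-level and derived-level triangles align degreewise, allowing the five lemma to close the induction.
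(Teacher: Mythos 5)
Your proposal is correct and follows essentially the same route as the paper: both reduce part 1) to part 2) via the fact that $\thick_{\cd A}(T)$ consists of direct summands of objects of $\tria_{\cd A}(T)$, and both prove full faithfulness of the restricted quotient functor by d\'evissage to the generator $T$, where the hypothesis $\Ext^{p}_{A}(T,T)=0$ for $p>0$ (plus automatic vanishing in negative degrees) gives the stalk case. The only cosmetic difference is that the paper packages your double induction as a two-step closure argument on the subcategories of objects for which the comparison map is an isomorphism, rather than an explicit induction on the lengths of the complexes.
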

\begin{proof}
By \cite[Lemma 2.2.2]{BondalVanDenBergh} we know that $\thick_{\cd A}(T)=\op{add}_{\cd A}(\tria_{\cd A}(T))$. Thus we just need to prove 2). Although the situation here is more general, (the proof of) \cite[Lemma 1.1]{Happel} gives that the restriction of the the quotient functor to the subcategory mentioned in assertion 2 is fully faithful. Its essential image is then a triangulated subcategory of $\tria_{\cd A}(T)$ which contains $T$, and so the functor induces a triangulated equivalence as stated. 
\end{proof}

\begin{lemma}\label{T reflexibility}
Let $A$ be an ordinary algebra, $T$ an $A$-module, $B=\End_{A}(T)$ and $0\ra Y\ra T^0\arr{f} T^1$ an exact sequence where $T^p\in\op{add}_{\Mod A}(T)$ and $\Ext^1_{A}(\op{im}(f),T)=0$. Then 
\[
\sigma(Y): Y\ra\Hom_{B^\text{op}}(\Hom_{A}(Y,T),T)\ko y\mapsto (f\mapsto f(y))
\] 
is an isomorphism.
\end{lemma}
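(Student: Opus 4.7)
The plan is a standard double-dualization argument using the hypothesis $\Ext^1_A(\op{im}(f),T)=0$ and the naturality of $\sigma$, reduced to the case of modules in $\op{add}_{\Mod A}(T)$.

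First I would apply $\Hom_A(-,T)$ to the short exact sequence $0\ra Y\ra T^0\ra\op{im}(f)\ra 0$. Since $\Ext^1_A(\op{im}(f),T)=0$, this produces a short exact sequence of left $B$-modules
\[
0\ra \Hom_A(\op{im}(f),T)\ra \Hom_A(T^0,T)\ra \Hom_A(Y,T)\ra 0.
\]
Applying the left-exact contravariant functor $\Hom_{B^\text{op}}(-,T)$ and using the naturality of $\sigma$, I obtain a commutative diagram whose top row is the exact sequence $0\ra Y\ra T^0\ra \op{im}(f)$ (regarded as $0\ra Y\ra T^0\ra \op{im}(f)\ra 0$) and whose bottom row is the left-exact sequence of double-duals, with vertical maps $\sigma(Y)$, $\sigma(T^0)$, $\sigma(\op{im}(f))$.

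The key local observation is that $\sigma(M)$ is an isomorphism whenever $M\in\op{add}_{\Mod A}(T)$: it is a canonical isomorphism for $M=T$ (where $\Hom_A(T,T)=B$ and the inverse is evaluation at $1_B$), and it extends to $\op{add}(T)$ by additivity. In particular $\sigma(T^0)$ and $\sigma(T^1)$ are isomorphisms. Applying the naturality of $\sigma$ to the inclusion $\op{im}(f)\hookrightarrow T^1$ immediately shows that $\sigma(\op{im}(f))$ is injective (any element in its kernel pushes to zero in $T^{1\prime\prime}$ via the iso $\sigma(T^1)$, hence is already zero in $\op{im}(f)$).

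With these ingredients in hand, the conclusion reduces to a short five-lemma-style chase. Injectivity of $\sigma(Y)$ is immediate from the injectivity of $\sigma(T^0)$ together with the left-exact top row. For surjectivity, given an element of $\Hom_{B^\text{op}}(\Hom_A(Y,T),T)$, one lifts its image in $(T^0)^{\prime\prime}$ along $\sigma(T^0)^{-1}$ to a $t\in T^0$, and uses the injectivity of $\sigma(\op{im}(f))$ established in the previous step to conclude that $t$ actually lies in $Y$. The only step that is not pure diagram-chasing is verifying the injectivity of $\sigma(\op{im}(f))$, and this is precisely where the presence of the whole sequence $0\ra Y\ra T^0\ra T^1$ (not merely the kernel-cokernel data) is used.
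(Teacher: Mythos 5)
Your proof is correct and follows essentially the same route as the paper: both dualize the short exact sequence $0\ra Y\ra T^0\ra\op{im}(f)\ra 0$ twice using $\Ext^1_A(\op{im}(f),T)=0$, observe that $\sigma(T^0)$ is an isomorphism and $\sigma(\op{im}(f))$ a monomorphism, and conclude by a Snake-Lemma-type chase. Your explicit justification that $\sigma(\op{im}(f))$ is injective (via naturality along $\op{im}(f)\hookrightarrow T^1$) is a detail the paper asserts without proof, and it is exactly the right argument.
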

\begin{proof}
We have a commutative diagram
\[
\xymatrix{
0\ar[d] && 0\ar[d] \\ 
Y\ar[d]\ar[rr]^{\sigma(Y)\hspace{1.8cm}} && \Hom_{B^\text{op}}(\Hom_{A}(Y,T),T)\ar[d] \\
T^0\ar[d]\ar[rr]^{\sigma(T^0)\hspace{1.8cm}} && \Hom_{B^\text{op}}(\Hom_{A}(T^0,T),T)\ar[d]\\
\op{im}(f)\ar[d]\ar[rr]^{\sigma(\op{im}(f))\hspace{1.8cm}} && \Hom_{B^\text{op}}(\Hom_{A}(\op{im}(f),T),T) \\
0 &
}
\]
with exact columns, where $\sigma(T^{0})$ is an isomorphism and $\sigma(\op{im}(f))$ a monomorphism. By the Snake Lemma, we conclude that $\sigma(Y)$ is an isomorphism.
\end{proof}

\begin{proposition}\label{A in thickT}
Let $A$ be an ordinary algebra and let $T$ be an $A$-module such that $\Ext^{p}_{A}(T,T)=0$ for all $p>0$. The following assertions are equivalent:
\begin{itemize}
\item[1)] $A\in\thick_{\cd A}(T)$.
\item[2)] There is an exact sequence
\[0\ra A\ra T^0\ra T^1\ra\dots\ra T^n\ra 0
\]
with $T^{i}\in\op{add}_{\Mod A}(T)$ for all $i=0,1,\dots,n$.
\end{itemize}
\end{proposition}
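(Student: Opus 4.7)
The direction $(2)\Rightarrow (1)$ is immediate from Proposition \ref{description thickT}: the exact sequence in (2) realizes the bounded complex $T^\bullet=(T^0\to T^1\to\cdots\to T^n)$, placed in cohomological degrees $[0,n]$, as quasi-isomorphic to $A$ in $\cd A$. Each $T^i\in\op{add}_{\Mod A}(T)$, so padding $T^\bullet$ with complementary summands and zero differentials presents it as a direct summand in $\ch A$ (hence in $\cd A$) of a bounded complex of modules in $\op{sum}_{\Mod A}(T)$; Proposition \ref{description thickT}(1) then gives $A\in\thick_{\cd A}(T)$.

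For $(1)\Rightarrow (2)$, my plan is to produce a bounded complex $Y^\bullet=(Y^0\to Y^1\to\cdots\to Y^n)$ in cohomological degrees $[0,n]$, with each $Y^i\in\op{add}_{\Mod A}(T)$, which is isomorphic to $A$ in $\cd A$. Once this is achieved, the fact that $H^0(Y^\bullet)=A$ and $H^p(Y^\bullet)=0$ for $p>0$ translates directly into the exactness of $0\to A\to Y^0\to Y^1\to\cdots\to Y^n\to 0$, since $H^0(Y^\bullet)=\ker(Y^0\to Y^1)$.

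To construct $Y^\bullet$, I would first invoke Proposition \ref{description thickT}(1) to obtain a bounded complex $X^\bullet$ with $X^i\in\op{sum}_{\Mod A}(T)$ of which $A$ is a direct summand in $\cd A$. By the full faithfulness of $\ch A\to\cd A$ on the subcategory of bounded complexes over $\op{sum}_{\Mod A}(T)$ proved in Proposition \ref{description thickT}(2), the splitting idempotent representing $A$ lifts to a homotopy idempotent on $X^\bullet$. Since the homotopy category of bounded complexes over the idempotent-complete additive category $\op{add}_{\Mod A}(T)$ is itself idempotent-complete, this idempotent splits to give a bounded complex $Z^\bullet$ over $\op{add}_{\Mod A}(T)$ with $Z^\bullet\cong A$ in $\cd A$; its cohomology is concentrated in degree $0$ and equal to $A$.

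The main obstacle is then to shift the representative so that its support becomes non-negative. My plan is downward induction on $p:=-\min\{i:Z^i\neq 0\}$. When $p>0$, the vanishing $H^{-p}(Z^\bullet)=0$ forces the leftmost differential $d^{-p}:Z^{-p}\to Z^{-p+1}$ to be injective; the hypothesis $\Ext^{p}_A(T,T)=0$ for all $p>0$, applied to $Z^{-p}\in\op{add}(T)$, gives $\Ext^{1}_A(T,Z^{-p})=0$, and one leverages this to modify $Z^\bullet$ within its quasi-isomorphism class by absorbing a contractible complex in degrees $-p$ and $-p+1$ so as to strip off the leftmost term. The hard part will be making this modification precise: the naive cokernel $Z^{-p+1}/d^{-p}(Z^{-p})$ need not lie in $\op{add}(T)$, so the reduction cannot be performed termwise but must be carried out inside $\cd A$, using $\Ext$-vanishing to produce the requisite retractions up to homotopy. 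Once the support of the representative is reduced to $[0,n]$, the desired exact sequence falls out as explained above.
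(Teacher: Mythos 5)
Your $(2)\Rightarrow(1)$ is fine, and your strategy for $(1)\Rightarrow(2)$ --- split the homotopy idempotent first, then strip the negative\mbox{-}degree terms --- is genuinely different from the paper's, which truncates \emph{first} (a section $A\ra T^\bullet$ in $\cd A$ is an honest cochain map out of a stalk complex in degree $0$, hence factors through the brutal truncation $\sigma^{\geq 0}T^\bullet$ and remains a section) and splits only at the very end, by dualizing with $\Hom_A(?,T)$ into $\Mod B^\text{op}$, applying the Comparison Theorem to the resulting projective resolution of $A^*\oplus X^*$, and using Lemma~\ref{T reflexibility} to recover $A\cong A^{**}$. Your route can be completed, but as written it has two gaps. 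The smaller one: you justify splitting the homotopy idempotent by asserting that the bounded homotopy category of an idempotent-complete additive category is idempotent complete; that is not a safe general fact. What rescues the step here is that $\op{add}_{\Mod A}(T)$ has an additive generator, so $\Hom_A(T,?)$ identifies it with $\proj(\End_A(T))$ and identifies your homotopy category with $\per(\End_A(T))$, which \emph{is} idempotent complete (compact objects of a derived category). You also need to extend the full faithfulness of Proposition~\ref{description thickT}(2) from $\op{sum}_{\Mod A}(T)$ to $\op{add}_{\Mod A}(T)$ (same proof).

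The real gap is the degree-stripping induction, which is exactly the step you leave open, and the $\Ext$-group you invoke, $\Ext^1_A(T,Z^{-p})$, is not the relevant one: to remove the leftmost term you must split the monomorphism $d^{-p}:Z^{-p}\ra Z^{-p+1}$, and the obstruction lives in $\Ext^1_A(\coker(d^{-p}),Z^{-p})$. This group does vanish, but proving it is the substance of the step. Setting $C^i=\coker(d^{i-1})$, the exact sequences $0\ra C^i\ra Z^{i+1}\ra C^{i+1}\ra 0$ for $-p+1\leq i+1\leq 0$ (exact because $H^{i}(Z^\bullet)=0$ for $i<0$) and the vanishing $\Ext^{>0}_A(Z^{j},Z^{-p})=0$ give $\Ext^1_A(C^{-p+1},Z^{-p})\cong\Ext^{p}_A(C^{0},Z^{-p})$; then $0\ra A\ra C^0\ra\op{im}(d^0)\ra 0$, the projectivity of $A=H^0(Z^\bullet)$, and a further dimension shift along $\op{im}(d^0)=\ker(d^1),\dots,\op{im}(d^{n-1})=Z^n$ kill the latter group. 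Once the sequence splits, $\coker(d^{-p})$ is a direct summand of $Z^{-p+1}$ and hence \emph{does} lie in $\op{add}_{\Mod A}(T)$ --- contrary to your worry --- so the reduction is termwise after all: $Z^\bullet$ decomposes as a contractible two-term complex plus a complex supported in $[-p+1,n]$. With these two repairs your argument closes and entirely bypasses the paper's duality/Comparison-Theorem machinery.
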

\begin{proof}
Implication $2)\Rightarrow 1)$ is clear. Let us prove $1)\Rightarrow 2)$. By Proposition~\ref{description thickT} we know that there is a section
\[A\ra T^\bullet
\]
in $\cd A$, where $T^\bullet$ is a bounded complex
\[\dots\ra 0\ra T^{-m}\ra\dots\ra T^{-1}\ra T^0\ra T^1\ra\dots\ra T^n\ra 0\ra\dots
\]
in which each $T^p$ is in $\op{sum}_{\Mod A}(T)$. Since $A$ is homologically projective in $\ch A$, this morphism in $\cd A$ is represented by a cochain map $f\in\Hom_{\cc A}(A,T^\bullet)$. In $\cc A$ we have a factorization of $f$
\[f: A\arr{\tilde{f}} \sigma^{\geq 0}(T^\bullet)\ra T^\bullet
\]
where $\sigma^{\geq 0}(T^\bullet)$ is the stupid truncation 
\[\dots\ra 0\ra T^0\ra T^1\ra\dots\ra T^n\ra 0\ra\dots
\]
Hence $\tilde{f}$ also induces a section in $\cd A$. Thus we can assume that in the morphism $f\in\Hom_{\cd A}(A,T^\bullet)$ the codomain $T^\bullet$ is concentrated in non-negative degrees. We fix a decomposition $T^\bullet=A\oplus X$ in $\cd A$, and let $e\in\End_{\cd A}(T^\bullet)$ the idempotent corresponding to the matrix
\[\left[\begin{array}{cc}0 & 0 \\ 0 & \id\end{array}\right]: A\oplus X\ra A\oplus X.
\]
By part 2) of Proposition~\ref{description thickT} we know that $e$ is represented by a cochain map that we still denote by $e$. Let
\[
\tilde{T}: \dots\ra 0\ra \tilde{T}^{-1}\ra\tilde{T}^0\ra\tilde{T}^1\ra\dots\ra\tilde{T}^{n}\ra 0\ra\dots
\]
be its mapping cone. We then have an isomorphism of triangles
\[\xymatrix{
A\oplus X\ar[rr]^{\left[\begin{array}{cc}0&0 \\ 0&\id\end{array}\right]}\ar[d]^{\wr} && A\oplus X\ar[rr]^{\left[\begin{array}{cc}\id & 0 \\ 0 & 0\end{array}\right]}\ar[d]^{\wr} && A\oplus \Sigma A\ar[d]^{\wr}\ar[rr]^{\left[\begin{array}{cc}0 & \id \\ 0 & 0\end{array}\right]} && \Sigma A\oplus \Sigma X\ar[d]^{\wr} \\
T^\bullet\ar[rr]^{e} && T^\bullet\ar[rr] && \tilde{T}\ar[rr] && \Sigma T^\bullet
 }
\]
where the triangle at the top is the direct sum of the two split triangles
\[\xymatrix{A\ar[r]^{0} & A\ar[rr]^{\left[\begin{array}{c}\id \\ 0\end{array}\right]} && A\oplus\Sigma A\ar[rr]^{\left[\begin{array}{cc}0 & \id\end{array}\right]} && \Sigma A
}
\]
and
\[X\arr{\id} X\ra 0\ra\Sigma X.
\]
We then get that $\tilde{T}\cong A\oplus\Sigma A$ in $\cd A$, and so:
\begin{itemize}
\item[i)] $H^p(\tilde{T})=0$ for $p\neq -1,0$.
\item[ii)] There is a section $A\ra\tilde{T}$ in $\cd A$.
\end{itemize}
Taking again the stupid truncation $\sigma^{\geq 0}(\tilde{T})$, we can deduce that we have a section $A\ra \widehat{T}$ in $\cd A$ where
\[\widehat{T}:\dots\ra 0\ra \widehat{T}^0\ra\dots\ra\widehat{T}^n\ra 0\ra\dots
\]
is a bounded complex over $\op{sum}_{\Mod A}(T)$ with homology concentrated in degree $0$. Note that $H^0(\widehat{T})=A\oplus X$ and that we have a quasi-isomorphism $g: H^0(\widehat{T})\ra\widehat{T}$. Let us denote by $(?)^*$ the contravariant functor
\[\Hom_{A}(?,T):\cc A\ra\cc(B^\text{op}),
\]
where $B=\End_{A}(T)$. Since $\Ext^p_{A}(T,T)=0$ for $p>0$, the cochain map
\[\widehat{T}^*\arr{g^*}H^0(\widehat{T})^*=A^*\oplus X^*
\] 
yields a projective resolution in $\Mod B^\text{op}$ consisting of finitely generated terms. Choose now projective resolutions
\[P_{(A,T)}\ra A^*
\]
and
\[P_{(X,T)}\ra X^*
\]
in $\Mod B^\text{op}$. The Comparison Theorem (see \cite[Theorem 2.2.6]{Weibel}) tells us that $\widehat{T}^*$ is isomorphic to $P_{(A,T)}\oplus P_{(X,T)}$ in $\ch(B^\text{op})$. This means that there are contractible complexes $C$ and $C'$ in $\cc(B^\text{op})$ such that
\[\widehat{T}^*\oplus C\cong P_{(A,T)}\oplus P_{(X,T)}\oplus C'
\]
in $\cc(B^\text{op})$. Let us denote also by $(?)^*$ the contravariant functor
\[\Hom_{B^\text{op}}(?,T): \cc(B^\text{op})\ra\cc A.
\]
After applying it we get an isomorphism
\[
\widehat{T}^{**}\oplus C^*\cong P_{(A,T)}^*\oplus P_{(X,T)}^*\oplus C'^*
\]
in $\cc A$. Note that $C^*$ and $C'^*$ are again contractible complexes, and that $\widehat{T}^{**}\cong \widehat{T}$ in $\cc A$. Thus, $P_{(A,T)}^*$ is isomorphic in $\ch A$ to a bounded complex $Q$ over $\op{add}_{\Mod A}(T)$ concentrated in non-negative degrees and with homology concentrated in degree $0$ isomorphic to $A^{**}$. Finally, since $A$ is a direct summand of a kernel, $H^0(\widehat{T})$, of a morphism $\widehat{T}^0\ra\widehat{T}^{1}$ between objects of $\op{sum}_{\Mod A}(T)$, Lemma~\ref{T reflexibility} implies that $A^{**}\cong A$ in $\Mod A$. Thus 
\[0\ra A\ra Q^0\ra Q^1\ra\dots\ra Q^n\ra 0
\]
is the required sequence.
\end{proof}

\begin{remark}
Proposition~\ref{A in thickT} is obviously false if we replace $T$ by an arbitrary $A$-module. Indeed, if $A$ is a hereditary Artin algebra and $M=A/J(A)$, where $J(A)$ is the Jacobson radical, then $\op{thick}_{\cd A}(M)=\per A$. However, condition 2 of Proposition~\ref{A in thickT} does not hold unless $A$ is semisimple.
\end{remark}


\subsection{Consequences of the results on dg categories}

The following consequence of Theorem~\ref{teor.derived tensor fully faithful} for ordinary algebras shows that in that context, the fact that $?\otimes_B^\L T$ is fully faithful is related to the notion of tilting module. Recall that a (right) module $M$ over an ordinary algebra $A$ is called {\em self-small} if the
canonical map 
\[
\Hom_A(M,M)^{(\alpha)}\ra\Hom_A(M,M^{(\alpha)})
\] 
is bijective, for all cardinals $\alpha$.

\begin{corollary}\label{cor.tensor fully faithful and tilting}
Let $A$ and $B$ be ordinary algebras and $T$ be a $B$-$A$-bimodule. Consider the following assertions:
\begin{itemize}
\item[1)] $?\otimes_B^\L T:\cd B\ra\cd A$ is fully faithful. 
\item[2)] The structural algebra homomorphism $B\ra\End_A(T)$ is an isomorphism, $T$ is self-small as a right $A$-module and $\Ext_A^i(T,T^{(\alpha)})=0$, for all integer $i>0$ and all cardinals $\alpha$.
\end{itemize}
Then $1)\Rightarrow 2)$ and, in case $\RHom_A(T,?):\cd A\ra \cd B$ preserves small coproducts of objects in $\Tria_{\cd A}(T)$, the converse is also true.
\end{corollary}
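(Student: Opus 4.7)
The plan is to invoke the equivalence between assertions $1)$ and $5)$ of Theorem~\ref{teor.derived tensor fully faithful}, after identifying the ordinary algebras $B$ and $A$ with one-object dg $k$-categories concentrated in degree zero. In that specialization, the representable module $B^{\wedge}$ coincides with $B$ viewed as a right $B$-module, and the bimodule $T(?,B)$ coincides with $T$ viewed as a right $A$-module; hence condition $5)$ of that theorem reads: $\lambda(B^{\wedge}):B\to\RHom_A(T,T)$ is an isomorphism in $\cd B$, and $T$ is a compact object of $\Tria_{\cd A}(T)$.

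For the forward direction $1)\Rightarrow 2)$, I would extract the three ingredients of $2)$ from this data. Applying $H^i$ to the isomorphism $\lambda(B^{\wedge})$ yields $B\cong\End_A(T)$ in degree zero and $\Ext_A^i(T,T)=0$ for $i>0$. Then, using that $T^{(\alpha)}\in\Tria_{\cd A}(T)$ and that each $\Sigma^i T\in\Tria_{\cd A}(T)$, compactness of $T$ inside $\Tria_{\cd A}(T)$ gives, for every $i\geq 0$ and every cardinal $\alpha$, a natural isomorphism
\[
\Hom_{\cd A}(T,\Sigma^i T^{(\alpha)})\cong \Hom_{\cd A}(T,\Sigma^i T)^{(\alpha)}.
\]
Specializing to $i=0$ translates precisely to self-smallness of $T$ as a right $A$-module; for $i>0$ it yields $\Ext_A^i(T,T^{(\alpha)})\cong \Ext_A^i(T,T)^{(\alpha)}=0$.

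For the converse under the additional preservation hypothesis, I would simply reverse these steps. The isomorphism $B\cong\End_A(T)$ together with the $\alpha=1$ instance of the Ext-vanishing in $2)$ ensures, by looking at cohomology, that $\lambda(B^{\wedge})$ is an isomorphism in $\cd B$. To verify compactness of $T$ in $\Tria_{\cd A}(T)$, given a family $\{X_i\}$ in that subcategory, the preservation hypothesis yields an isomorphism $\RHom_A(T,\coprod X_i)\cong\coprod \RHom_A(T,X_i)$ in $\cd B$; applying the coproduct-preserving functor $H^0$ then gives the compactness identity $\Hom_{\cd A}(T,\coprod X_i)\cong\coprod \Hom_{\cd A}(T,X_i)$. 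Both halves of condition $5)$ of Theorem~\ref{teor.derived tensor fully faithful} then hold, and so $?\otimes_B^\L T$ is fully faithful.

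The only technical caveat worth flagging is the harmonization of derived-category Hom and coproducts with their module-theoretic counterparts; this is routine because modules embed in $\cd A$ as stalk complexes in degree zero and coproducts in $\cd A$ of such complexes agree with module-theoretic direct sums. I do not anticipate any serious obstacle beyond this bookkeeping.
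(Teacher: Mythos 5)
Your proof is correct and follows essentially the same route as the paper: both reduce the statement to Theorem~\ref{teor.derived tensor fully faithful}. The only cosmetic difference is that you go through condition 5) of that theorem (compactness of $T$ in $\Tria_{\cd A}(T)$), whereas the paper observes directly that the unit $\lambda(B^{(\alpha)})$ is an isomorphism for every cardinal $\alpha$ precisely when the three conditions of 2) hold and then invokes the theorem; the two translations are equivalent and equally rigorous.
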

\begin{proof}
For each cardinal $\alpha$, the unit map 
\[
\lambda(B^{(\alpha)}):B^{(\alpha )}\ra \RHom_A(T,B^{(\alpha)}\otimes_B^\L T)=\RHom_A(T,T^{(\alpha)})
\] 
is an isomorphism if, and only if, $\Ext_A^i(T,T^{(\alpha )})=0$, for all integers $i>0$, and the canonical map 
\[B^{(\alpha)}\cong\Hom_A(T,T)^{(\alpha)}\ra\Hom_A(T,T^{(\alpha )})
\] 
is an isomorphism. Then the result is a direct consequence of Theorem~\ref{teor.derived tensor fully faithful}.
\end{proof}

The following result proves at once both the first part of \cite[Lemma 4.2]{Chen-Xi2} and its converse.

\begin{corollary}\label{recollement and bimodule}
Let $A$ and $B$ be ordinary algebras and $T$ be a $B$-$A$-bimodule. The following assertions are equivalent:
\begin{itemize}
\item[1)] There is a recollement
\[\xymatrix{
\cd B\ar@/_2.5pc/[d]_{j_{!}}\ar@/_-2.5pc/[d] \\
\cd A\ar[u]\ar@/_2.5pc/[d]\ar@/_-2.5pc/[d] \\
\cy\ar[u]
}
\]
such that $j_{!}=?\otimes^\L_{B}T$.
\item[2)] The following conditions hold:
\begin{itemize}
\item[2.1)] The structural map $B\ra\End_{A}(T)$ is an isomorphism of algebras. 
\item[2.2)] As a right $A$-module, $T$ admits a finite projective resolution with finitely generated terms.
\item[2.3)] $\Ext_A^i(T,T)=0$, for all $i>0$
\end{itemize}
\end{itemize}
\end{corollary}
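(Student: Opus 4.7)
The plan is to translate ``existence of a recollement with $j_!=?\otimes_B^{\mathbf L}T$'' into three checkable conditions: fully faithfulness of $?\otimes_B^{\mathbf L}T$, preservation of compact objects, and the existence of a right adjoint to $\RHom_A(T,?)$. Once formulated this way, the prior results \ref{cor.tensor fully faithful and tilting}, \ref{two-sided fully faithful adjoint} and \ref{lemma:existence of left adjoints} do almost all of the work.

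For $1)\Rightarrow 2)$: the recollement immediately gives that $j_!=?\otimes_B^{\mathbf L}T$ is fully faithful and that $j^*=j^!=\RHom_A(T,?)$ preserves small coproducts (having a right adjoint $j_*$). Consequently $j_!$ preserves compact objects, so $T=j_!(B)$ is compact in $\cd A$; for an ordinary algebra this is precisely condition 2.2 (a bounded resolution by finitely generated projectives). With 2.2 at hand, $T$ is in particular self-small and $\Ext_A^i(T,T^{(\alpha)})\cong\Ext_A^i(T,T)^{(\alpha)}$. Now Corollary \ref{cor.tensor fully faithful and tilting} applied to the fully faithful $?\otimes_B^{\mathbf L}T$ delivers 2.1 (the structural map $B\ra \End_A(T)$ is an isomorphism) and $\Ext_A^i(T,T^{(\alpha)})=0$ for all $i>0$ and all $\alpha$, which specialises to 2.3.

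For $2)\Rightarrow 1)$: from 2.2, $T$ is a compact object of $\cd A$, hence $\RHom_A(T,?)$ commutes with small coproducts and, using 2.3, one obtains $\Ext_A^i(T,T^{(\alpha)})=0$ for every $i>0$ and every cardinal $\alpha$. In particular the additional hypothesis of Corollary \ref{cor.tensor fully faithful and tilting} (``$\RHom_A(T,?)$ preserves coproducts of objects in $\Tria_{\cd A}(T)$'') is satisfied, and 2.1 together with the self-smallness of $T$ (which follows from compactness) yield that $?\otimes_B^{\mathbf L}T:\cd B\ra\cd A$ is fully faithful.

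It remains to build the recollement. Since $\cd A$ is compactly generated and $\RHom_A(T,?)$ preserves small coproducts, Lemma \ref{lemma:existence of left adjoints} produces a right adjoint $j_*$ to $j^*:=\RHom_A(T,?)$. Thus $j^*$ sits in an adjoint triple $(j_!,j^*,j_*)$ with $j_!=?\otimes_B^{\mathbf L}T$ fully faithful; by Lemma \ref{two-sided fully faithful adjoint}, $j_*$ is also fully faithful and $(\op{im}(j_!),\ker(j^*),\op{im}(j_*))$ is a triangulated TTF triple in $\cd A$. Taking $\cy:=\ker(j^*)=\ker(\RHom_A(T,?))$ and invoking the bijection between TTF triples and d\'ecollements established in Section 3.2 yields the desired recollement with $j_!=?\otimes_B^{\mathbf L}T$. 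The only subtle point, and where I expect to spend a line or two of care, is verifying that the hypothesis of Corollary \ref{cor.tensor fully faithful and tilting} on coproducts is truly supplied by compactness of $T$; everything else is a matter of chaining the already-proved results.
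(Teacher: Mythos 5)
Your proposal is correct and follows essentially the same route as the paper: both reduce the existence of the recollement to ``$?\otimes_B^{\L}T$ fully faithful and $\RHom_A(T,?)$ preserves small coproducts'' via Brown representability and Lemma~\ref{two-sided fully faithful adjoint}, identify the coproduct condition with compactness of $T$ (condition 2.2), use compactness to pass between $\Ext_A^i(T,T)$ and $\Ext_A^i(T,T^{(\alpha)})$, and then invoke Corollary~\ref{cor.tensor fully faithful and tilting}. The point you flagged as subtle is indeed unproblematic, since compactness of $T$ gives preservation of all small coproducts by $\RHom_A(T,?)$, hence in particular of those in $\Tria_{\cd A}(T)$.
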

\begin{proof}
Due to Brown representability theorem and to Lemma~\ref{two-sided fully faithful adjoint}, the recollement of
assertion 1) exists if, and only if, $?\otimes^\L_B T$ is fully faithful and $\RHom_A(T,?):\cd A\ra\cd B$ preserves small coproducts. But $\RHom_A(T,?)$ preserves small coproducts exactly when $T\in\per A$, i.e., when $2.2)$ holds. In this case the right $A$-module $T$ is self-small and we have an isomorphism 
\[
\Ext_A^i(T,T)^{(\alpha)}=H^i(\RHom_A(T,T)^{(\alpha)})\arr{\sim}H^i(\RHom_A(T,T^{(\alpha)})=\Ext_A^i(T,T^{(\alpha )}),
\] 
for each integer $i>0$ and each cardinal $\alpha$.  The result is now a direct consequence of Corollary~\ref{cor.tensor fully faithful and tilting}.
\end{proof}


As a consequence of Corollary~\ref{recollement and bimodule} we have the following result, which generalizes the recollement of \cite[\S 6.1]{Chen-Xi2} and \cite[Theorem 1.3]{Chen-Xi2}.

\begin{corollary}\label{generalizing ChenXi}
Let $A$ be an ordinary algebra, let $W$ be an injective cogenerator of $\Mod A$ and let $U$ be a right $A$-module satisfying the following two condictions:
\begin{itemize}
\item[1)] $\Ext_A^i(U,U)=0$, for all $i>0$. 
\item[2)] There is a natural number $n$ and an exact sequence $0\ra U^{-n}\ra\dots\ra U^{-1}\ra U^0\ra W\ra 0$, with $U^i\in\op{add}(U)$ for $i=-n,...,-1,0$.
\end{itemize}
If $\Lambda=\End_A(W)\ko B=\End_A(U)$ and $T=\Hom_A(U,W)$, which is a $\Lambda$-$B$-bimodule, then there is a recollement 
\[\xymatrix{
\cd\Lambda\ar@/_2.5pc/[d]_{j_{!}}\ar@/_-2.5pc/[d] \\
\cd B\ar[u]\ar@/_2.5pc/[d]\ar@/_-2.5pc/[d] \\
\cy\ar[u]
}
\]
such that $j_!=?\otimes^\L_\Lambda T$.
\end{corollary}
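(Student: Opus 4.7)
The plan is to deduce the statement from Corollary~\ref{recollement and bimodule} applied with the roles of the algebras $A$ and $B$ there played here by $B$ and $\Lambda$ respectively, and with the bimodule $T=\Hom_A(U,W)$. Thus we must verify three conditions: (i) the structural map $\Lambda\to\End_B(T)$ is an isomorphism; (ii) $T$ admits a finite projective resolution by finitely generated projective right $B$-modules; and (iii) $\Ext_B^p(T,T)=0$ for all $p>0$.

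First I would construct the required projective resolution of $T$ in $\Mod B$. The hypothesis $\Ext_A^i(U,U)=0$ for $i>0$ extends by additivity to $\Ext_A^i(U,U^j)=0$ for each $U^j\in\op{add}(U)$ and $i>0$. Hence applying $\Hom_A(U,?)$ to the exact sequence
\[0\ra U^{-n}\ra\dots\ra U^{-1}\ra U^0\ra W\ra 0\]
preserves exactness \emph{at every intermediate step} (break it into short exact sequences and use the vanishing of $\Ext^1_A(U,?)$), producing an exact sequence
\[0\ra \Hom_A(U,U^{-n})\ra\dots\ra\Hom_A(U,U^0)\ra T\ra 0\]
in $\Mod B$. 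Each $\Hom_A(U,U^{-j})$ is a direct summand of some $B^r$, hence finitely generated projective, so this is the resolution required in (ii).

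Next I would compute $\Ext_B^\bullet(T,T)$ by applying $\Hom_B(?,T)$ to the truncated resolution $P^\bullet:\, 0\to\Hom_A(U,U^{-n})\to\cdots\to \Hom_A(U,U^0)\to 0$. A standard Yoneda-type identification gives a natural isomorphism $\Hom_B(\Hom_A(U,U^{-j}),T)\cong \Hom_A(U^{-j},W)$ (check it first for $U^{-j}=U$ and extend by additivity to $\op{add}(U)$), under which the differentials become those induced by $\Hom_A(?,W)$. Thus the resulting complex is $\Hom_A(U^\bullet,W)$. Since $W$ is injective in $\Mod A$, the functor $\Hom_A(?,W)$ is exact, and applying it to the exact sequence $0\to U^{-n}\to\cdots\to U^0\to W\to 0$ shows that $\Hom_A(U^\bullet,W)$ has cohomology $\Hom_A(W,W)=\Lambda$ in degree $0$ and $0$ in positive degrees. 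This simultaneously establishes (iii) and identifies $\End_B(T)$ with $\Lambda$.

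The last small check is that the isomorphism $\Lambda\arr{\sim}\End_B(T)$ obtained above coincides with the structural map $\lambda\mapsto (f\mapsto \lambda\circ f)$: tracing back through the Yoneda identification, an element $\lambda\in\Lambda$ corresponds to the cocycle $\lambda\circ\pi:U^0\to W$ in $\Hom_A(U^0,W)$, where $\pi:U^0\to W$ is the augmentation, and this in turn represents precisely the endomorphism of $T$ given by left multiplication by $\lambda$. Hence (i) holds. All hypotheses of Corollary~\ref{recollement and bimodule} are satisfied for the $\Lambda$-$B$-bimodule $T$, yielding the desired recollement with $j_!=?\otimes^\L_\Lambda T$. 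The only mildly delicate point is the Yoneda identification combined with the exactness of $\Hom_A(?,W)$; everything else is bookkeeping.
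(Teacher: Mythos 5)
Your proposal is correct and follows essentially the same route as the paper: reduce to Corollary~\ref{recollement and bimodule} for the $\Lambda$-$B$-bimodule $T$, obtain the finite projective resolution by applying $\Hom_A(U,?)$ to the coresolution of $W$ (exactness coming from the $\Ext$-vanishing on $\op{add}(U)$), and use the identification $\Hom_B(\Hom_A(U,U'),T)\cong\Hom_A(U',W)$ for $U'\in\op{add}(U)$ together with the injectivity of $W$ to get the $\Ext_B$-vanishing. The only cosmetic difference is that you read off $\Lambda\arr{\sim}\End_B(T)$ as the $H^0$ of the same complex, whereas the paper deduces it from the trace map $T\otimes_BU\to W$ being an isomorphism; both are fine.
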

\begin{proof}
We shall prove that conditions 2.1)-2.3) of Corollary~\ref{recollement and bimodule} hold (with the pair of algebras $(B,A)$ replaced by $(\Lambda ,B)$).

The key point here is that the functor $\Hom_A(U,?):\Mod A\ra\Mod B$ keeps exact the sequence in 2). On the other hand, the trace map
$\Hom_A(U,U^i)\otimes_BU\ra U^i$ is an isomorphism since $U^i\in\op{add}_{\Mod A}(U)$, for each $i=-n,...,-1,0$. The right exactness of
the functor $?\otimes_BU$ then implies that also the trace map $T\otimes_BU=\Hom_A(U,W)\otimes_BU\ra W$ is an isomorphism. From that it follows easily that the map $\lambda:\Lambda=\End_A(W)\ra\End_B(T)$ given by the functor $\Hom_A(U,?)$ is an isomorphism. Then condition 2.1) in Corollary~\ref{recollement and bimodule} holds.

On the other hand, applying the functor $\Hom_A(U,?)$ to the exact sequence in 2), we obtain a finite projective resolution of $T$ as a
right $B$-module, which consist of finitely generated terms since $U^i\in\op{add}(U)$ for all $i$. Therefore condition 2.2) in Corollary~\ref{recollement and bimodule} holds.

It remains to check that $\Ext_B^i(T,T)=0$, for all $i>0$. We need to prove that, for each $i=-n,...,-1$, the sequence
\[
\Hom_B[(U,U^{i+1}),(B,W)]\ra\Hom_B[(U,U^{i}),(B,W)]\ra\Hom_B[(U,U^{i-1}),(B,W)]\ (\ast)
\]
is exact at the central point, where $(U,X)=\Hom_A(U,X)$ for each right $A$-module $X$. But the argument in the second paragraph of
this proof essentially proves that the functors $\Hom_A(U,?)$ and $?\otimes_BU$ induces mutually inverse maps
\[
\Hom_A(U',W)\arr{\sim}\Hom_B[(U,U'),(U,W)],
\]
for each $U'\in\op{add}(U_A)$. It follows that the exactness of the sequence $(\ast)$ is tantamount to the exactness at the central point of
the sequence
\[
\Hom_A(U^{i+1},W)\ra\Hom_A(U^{i},W)\ra\Hom_A(U^{i-1},W),
\]
which is clear since $W$ is an injective right $A$-module.
\end{proof}

\begin{remark}
Note that in Corollaries~\ref{recollement and bimodule} and \ref{generalizing ChenXi} the triangulated category $\cy$ is equivalent to $\cd C$ for some dg algebra $C$. If, in addition, $B$ is a $k$-flat then such $C$ can be chosen together with a homological epimorphism of dg algebras $f: B\ra C$ suh that 
\[
i_{*}:\cd C\simeq \cy\ra \cd \cb
\]
is the restriction of scalars along $f$. The proof of these facts goes as in the proof of Corollary~\ref{generalized tilting and recollements}.
\end{remark}

\begin{remark}
Note that if $A$ is a ring with Morita selfduality (\eg an Artin algebra) and $W$ is the minimal injective cogenerator, then $\Lambda$ and $A$ are Morita equivalent in the above example. Compare with \cite[Theorem 1.3]{Chen-Xi2}.
\end{remark}

The following consequence of Theorem~\ref{special localization result} is related to \cite[Theorem 2.2]{BazzoniManteseTonolo}.

\begin{corollary} \label{cor.tilting buenos via RHom}
Let $A$ and $B$ be ordinary algebras and $T$ be a $B$-$A$-bimodule. The next assertions are equivalent:
\begin{itemize}
\item[1)] The following conditions hold:
\begin{itemize}
\item[a)] $T$ is a faithfully balanced bimodule. 
\item[b)] $\Ext_A^p(T,T)=0=\Ext_{B^\text{op}}^p(T,T)$, for all $p>0$. 
\item[c)] $T$ admits a finite projective resolution with finitely generated terms as left $B$-module.
\end{itemize}
\item[2)] $\RHom_A(T,?):\cd A\ra\cd B$ is fully faithful, preserves compact objects and $B$ is in its essential image. 
\item[3)] The following conditions hold:
\begin{itemize}
\item[a)] The structural algebra morphism $B\ra\End_A(T)$ is an isomorphism. 
\item[b)] $\Ext_A^p(T,T)=0$ for all $p>0$.
\item[c)] There exists an exact sequence in $\Mod A$
\[0\ra A\ra T^0\ra T^1\ra \dots\ra T^n\ra 0, 
\]
where the $T^i$ are direct summands of finite direct sums of copies of $T$.
\end{itemize}
\item[4)] The following conditions hold:
\begin{itemize}
\item[a)] The structural algebra morphism $B\ra\End_{A}(T)$ is an isomorphism.
\item[b)] $T\otimes_A^\L?:\cd(A^\text{op})\ra\cd(B^\text{op})$ is fully faithful and has a right adjoint which preserves coproducts.
\end{itemize}
\item[5)] The following conditions hold:
\begin{itemize}
\item[a)] The structural algebra morphism $B\ra\End_A(T)$ is an isomorphism. 
\item[b)] $\Ext_A^p(T,T)=0$ for all $p>0$.
\item[c)] The functor $?\otimes^\L_BT:\cd B\ra\cd A$ has a fully faithful left adjoint.
\end{itemize}
\end{itemize}
\end{corollary}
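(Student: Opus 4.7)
The plan is to specialize Theorem~\ref{special localization result} to $\ca = A$ and $\cb = B$, regarded as dg $k$-categories with a single object each (whose endomorphism complex is concentrated in degree zero). Because the equivalence of conditions 1)--6) in that theorem requires no $k$-projectivity hypothesis, it suffices to match each of the corollary's five conditions with one of those theorem conditions by means of a short dictionary.

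The dictionary has four entries. First, by Example~\ref{good n-tilting}, $T$ is homologically faithfully balanced if and only if $T$ is faithfully balanced and $\Ext^p_A(T,T)=0=\Ext^p_{B^{\text{op}}}(T,T)$ for all $p>0$. Second, $T(A,?)\in\per(\cb^{\text{op}})$ translates into: $T$ admits a finite projective resolution with finitely generated terms as a left $B$-module. Third, since $\cb$ has a single object, the derived unit $\lambda(B^\wedge):B^\wedge\arr{\sim}\RHom_A(T,T(?,B))$ in $\cd B$ unpacks cohomologically as the conjunction of the structural isomorphism $B\arr{\sim}\End_A(T)$ and the vanishing $\Ext^p_A(T,T)=0$ for $p>0$. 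Fourth, once that vanishing is available, Proposition~\ref{A in thickT} identifies $A^\wedge\in\thick_{\cd A}(T(?,B)\ko B\in\cb)=\thick_{\cd A}(T)$ with the existence of an exact sequence $0\to A\to T^0\to\dots\to T^n\to 0$ in $\Mod A$ with $T^i\in\op{add}(T)$.

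Via these entries, Corollary 1) becomes Theorem 1); Corollary 2) is Theorem 3) verbatim (the single $B\in\cb$ yields the ``$B$ in the essential image'' clause); Corollary 3) becomes Theorem 4); Corollary 5) becomes Theorem 6). Theorem~\ref{special localization result} thus gives the equivalence of these four corollary conditions.

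The main obstacle is matching Corollary 4) with Theorem 5), since Theorem 5) demands the derived iso $\lambda(B^\wedge)$ (which, in the dictionary, carries the extra content $\Ext^p_A(T,T)=0$) whereas Corollary 4) postulates only the underived structural iso. The implication Corollary 1) $\Rightarrow$ Corollary 4) is immediate via Theorem 1) $\Rightarrow$ Theorem 5). For the reverse, I would show Corollary 4) $\Rightarrow$ Corollary 1). The full faithfulness of $T\otimes_A^\L ?$, evaluated at $A^\vee$, makes the unit $A\arr{\sim}\RHom_{B^{\text{op}}}(T,T)$ an isomorphism in $\cd(A^{\text{op}})$, yielding both $A\cong\End_{B^{\text{op}}}(T)$ and $\Ext^p_{B^{\text{op}}}(T,T)=0$ for $p>0$; the preservation of small coproducts by the right adjoint $\RHom_{B^{\text{op}}}(T,?)$ yields $T\in\per(B^{\text{op}})$. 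Combined with the given structural iso $B\cong\End_A(T)$, this secures faithful balance, the $\cb^{\text{op}}$-side Ext vanishing, and the perfectness clause of Corollary 1); only the symmetric vanishing $\Ext^p_A(T,T)=0$ remains. To extract it I would pass to the $B$-dual $T^\ast=\RHom_{B^{\text{op}}}(T,B)$, available since $T\in\per(B^{\text{op}})$, and invoke Proposition~\ref{B-duality and isos} to rewrite $\RHom_A(T,T)$ in terms whose cohomology is controlled by the already-established vanishing of $\Ext^p_{B^{\text{op}}}(T,T)$, thereby closing the last gap and completing the cycle of equivalences 1)--5).
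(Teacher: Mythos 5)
Your dictionary for conditions 1), 2), 3) and 5) is correct and is precisely the paper's (very terse) argument: regard $A$ and $B$ as one-object dg categories and match these four conditions with conditions 1), 3), 4) and 6) of Theorem~\ref{special localization result}, using Example~\ref{good n-tilting} for the faithful-balance clause, the identification of membership in $\per(B^{\text{op}})$ with the existence of a finite projective resolution by finitely generated left $B$-modules, the cohomological unpacking of $\lambda(B^\we)$ as ``structural isomorphism plus $\Ext$-vanishing'', and Proposition~\ref{A in thickT} for the coresolution. Up to that point you have reproduced the intended proof.

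The treatment of condition 4) is where the proposal breaks down --- and you have in fact put your finger on a genuine defect of the statement itself. Condition 5) of Theorem~\ref{special localization result} demands that $\lambda(B^\we)$ be an isomorphism in $\cd B$, i.e. both $B\arr{\sim}\End_A(T)$ and $\Ext_A^p(T,T)=0$ for all $p>0$, whereas the corollary's 4a) retains only the degree-zero part; your plan to recover the missing $\Ext$-vanishing from 4) cannot succeed because that implication is false. Indeed, let $T=A\oplus M$ be any generator of $\Mod A$ and $B=\End_A(T)$. Then $T$ is faithfully balanced and finitely generated projective as a left $B$-module, so $T\otimes_A^\L ?$ is fully faithful (its unit at $A$ is the underived structural isomorphism $A\arr{\sim}\End_{B^{\text{op}}}(T)$, and $T$ is compact in $\cd(B^{\text{op}})$, so the symmetric version of Theorem~\ref{teor.derived tensor fully faithful} applies) and its right adjoint $\RHom_{B^{\text{op}}}(T,?)=\Hom_{B^{\text{op}}}(T,?)$ preserves coproducts; thus 4) holds verbatim. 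Yet for $A=k[x]/(x^2)$ and $M=k$ one has $\Ext_A^1(T,T)\neq 0$, so 1), 2), 3) and 5) all fail; part 4) of Theorem~\ref{main result ordinary case} exhibits the same phenomenon. Hence no duality argument can close the gap, and the appeal to Proposition~\ref{B-duality and isos} is in any case illegitimate here, since that proposition assumes $\ca$ is $k$-projective and the corollary does not. The correct repair is to add ``$\Ext_A^p(T,T)=0$ for all $p>0$'' to condition 4), after which it becomes literally condition 5) of the theorem; note that the paper's own one-line proof silently makes this identification and therefore contains exactly the gap you detected, without acknowledging it.
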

\begin{proof}
Let us regard the ordinary algebras $A$ and $B$ as dg categories $\ca$ and $\cb$. Then
statement 1 (resp. 2, 3, 4, 5) correspond to statement 1 of Theorem~\ref{special localization result} (resp. 3, 4, 5, 6). 
\end{proof}

The next result is also a consequence of Theorem~\ref{special localization result} (see also Example~\ref{good n-tilting}):

\begin{corollary}\label{some characterization of good tilting}
Let $A$ be an ordinary algebra, $T$ a right $A$-module and $B=\End_{A}(T)$. Assume $T$ has finite projective dimension over $A$ and $\Ext^p_{A}(T,T^{(\alpha)})=0$ for each cardinal $\alpha$ and each integer $p>0$. The following conditions are equivalent:
\begin{itemize}
\item[1)] $T$ is a good tilting module over $A$.
\item[2)] $\RHom_{A}(T,?):\cd A\ra\cd B$ is fully faithful and preserves compact objects.
\item[3)] $?\otimes^\L_{B}T$ has a fully faithful left adjoint.
\end{itemize}
\end{corollary}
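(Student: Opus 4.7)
The plan is to reduce the statement to Theorem~\ref{special localization result} applied to the dg categories $\ca=A$ and $\cb=B$ (each regarded as a dg category with a single object and with endomorphism complex concentrated in degree zero), together with Proposition~\ref{A in thickT}.

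First I would observe that the standing hypotheses $B=\End_{A}(T)$ and $\Ext^{p}_{A}(T,T)=0$ for $p>0$ (the latter being c) with $\alpha=1$) force the unit
\[
\lambda(B^{\we}):B\ra\RHom_{A}(T,T)
\]
to be an isomorphism in $\cd B$, because its zeroth cohomology is $\End_{A}(T)=B$ while its higher cohomology $\Ext^{p}_{A}(T,T)$ vanishes. In particular, $B$ is automatically in the essential image of $\RHom_{A}(T,?)$, being isomorphic to $\RHom_{A}(T,T)$.

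Next, since $\cb$ has the unique object $B$, we have $T(?,B)=T$ in $\cd A$, so that $\thick_{\cd A}(T(?,B)\ko B\in\cb)=\thick_{\cd A}(T)$. After absorbing the automatic condition on $\lambda$, assertion (4) of Theorem~\ref{special localization result} reduces to the statement $A\in\thick_{\cd A}(T)$. I would then invoke Proposition~\ref{A in thickT} to identify this with condition b') of Definition~\ref{tilting module}; combined with a) and c), which hold by the standing hypotheses, this makes assertion (1) of the corollary equivalent to assertion (4) of Theorem~\ref{special localization result}.

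Finally I would match the remaining two assertions. Assertion (2) of the corollary, together with the automatic essential-image statement above, is precisely assertion (3) of Theorem~\ref{special localization result}; and assertion (3) of the corollary, together with the automatic isomorphism $\lambda(B^{\we})$, is precisely assertion (6) of that theorem. Since Theorem~\ref{special localization result} delivers the equivalence of its assertions (3), (4) and (6), the three assertions of the corollary are mutually equivalent. The only step requiring some care is the verification that the essential-image clause of Theorem~\ref{special localization result}(3) is automatic in our setting; as noted above, this follows immediately from $B=\End_{A}(T)$ together with the vanishing of $\Ext^{p}_{A}(T,T)$ for $p>0$, and no further computation is needed.
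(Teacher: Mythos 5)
Your proposal is correct and follows exactly the route the paper intends: the paper states the corollary as a direct consequence of Theorem~\ref{special localization result}, matching assertions 1), 2), 3) of the corollary with assertions 4), 3), 6) of that theorem after observing that $\lambda(B^{\we})$ is automatically an isomorphism (hence $B$ lies in the essential image of $\RHom_A(T,?)$) and using Proposition~\ref{A in thickT} to translate $A\in\thick_{\cd A}(T)$ into condition b') of Definition~\ref{tilting module}. Nothing is missing.
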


Finally we deduce:

\begin{corollary}
Let $A$ be a $k$-projective ordinary algebra,  $T$ a right $A$-module satisfying the following conditions:
\begin{itemize}
\item[a)] There exists an exact sequence $0\ra A\ra T^0\ra\dots\ra T^n\ra 0$ in $\Mod A$, with $T^i\in\op{add}(T)$, for all $i\geq 0$.
\item[b)] $\Ext_A^i(T,T)=0$ for all $i>0$. 
\end{itemize}
If $B=\End(T_A)$ then $\Ext_A^i(T,A)$ and $\Ext_{B^\text{op}}^i(T,B)$ are isomorphic $k$-modules, for all $i\geq 0$.
\end{corollary}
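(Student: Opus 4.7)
The plan is to reduce to Remark~\ref{different but isomorphic duals}. First, I would check that the hypotheses put $T$ into the setting of Corollary~\ref{cor.tilting buenos via RHom}: since $B=\End(T_A)$ by definition, the structural map $B\to\End_A(T)$ is an isomorphism; combined with hypotheses a) and b), this is precisely assertion 3) of that corollary. Hence all the equivalent assertions 1)-5) hold. In particular, assertion 1) gives that $T$ is faithfully balanced, $\Ext^p_A(T,T)=0=\Ext^p_{B^\text{op}}(T,T)$ for $p>0$, and $T$ admits a finite projective resolution with finitely generated terms as a left $B$-module, so that $T\in\per(B^\text{op})$.

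Now regard $A$ and $B$ as dg categories concentrated in degree~$0$, with single objects $a$ and $b$ respectively; then $T(a,?)=T$ as a left $B$-module. The previous step says that $T$ is homologically faithfully balanced and $T(a,?)\in\per(B^\text{op})$, which is precisely condition~1) of Theorem~\ref{special localization result}. The hypothesis that $A$ is $k$-projective translates into the dg category $A$ being $k$-projective in the sense of the paper.

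Hence the hypotheses of Remark~\ref{different but isomorphic duals} are satisfied, and this remark yields an isomorphism
\[
\RHom_A(T,A)\;\cong\;\RHom_{B^\text{op}}(T,B)
\]
in $\cd B$ (as right dg $B$-modules, after restricting scalars from $\cd(A^\text{op}\otimes B)$ to $\cd B$). Taking the $i$-th cohomology on both sides produces an isomorphism of right $B$-modules, and in particular of $k$-modules,
\[
\Ext_A^i(T,A)\;\cong\;\Ext_{B^\text{op}}^i(T,B),
\]
for every $i\geq 0$, which is what we want.

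There is no real obstacle beyond bookkeeping: the whole argument is an orchestrated application of results already proved. The only mildly subtle point is matching the dictionary between ordinary algebras with Ext-vanishing plus the exact sequence in a) (the input) and the ``homologically faithfully balanced plus $T\in\per(B^\text{op})$'' condition needed to invoke Remark~\ref{different but isomorphic duals}; this passage is handled by Corollary~\ref{cor.tilting buenos via RHom}.
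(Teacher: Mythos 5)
Your proposal is correct and is essentially the paper's own argument: the paper's proof is the one-line instruction to take homologies in $\RHom_A(T,A)$ and $\RHom_{B^\text{op}}(T,B)$ and invoke Corollary~\ref{cor.tilting buenos via RHom} together with Remark~\ref{different but isomorphic duals}, which is exactly the route you follow. Your extra bookkeeping (entering via assertion 3) of the corollary, translating to condition 1) of Theorem~\ref{special localization result}, and noting where $k$-projectivity of $A$ is used) is the correct way to fill in that one-liner.
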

\begin{proof}
Just take homologies in $\RHom_{A}(T,A)$ and $\RHom_{B^\text{op}}(T,B)$ and use Corollary~\ref{cor.tilting buenos via RHom} and Remark~\ref{different but isomorphic duals}.
\end{proof}

\subsection{Hereditary case}


\begin{lemma}\label{relating the two counits}
Let $A$ and $B$ be ordinary algebras, and $T$ an $A$-$B$-bimodule such that $\op{pd}_{A}(T)\leq 1$. Let $\delta$ be the counit of the adjoint pair
\[\xymatrix{ \cd A\ar@<1ex>[d]^{\RHom_{A}(T,?)}\\
\cd B,\ar@<1ex>[u]^{?\otimes^L_{B}T}
}
\]
and let $\varepsilon$ be the counit of the adjoint pair
\[\xymatrix{ \Mod A\ar@<1ex>[d]^{\Hom_{A}(T,?)}\\
\Mod B.\ar@<1ex>[u]^{?\otimes_{B}T}
}
\]
The following assertions hold:
\begin{itemize}
\item[1)] There are morphisms 
\[\tau_{i}: \op{Tor}^B_{i+2}(\Ext^1_{A}(T,?),T)\ra\op{Tor}^B_{i}(\Hom_{A}(T,?),T)\ko i\geq 0,
\]
of  endofunctors of $\Mod A$.
\item[2)] There is a commutative diagram with exact columns formed by endofunctors of $\Mod A$ and morphisms between them
\[\xymatrix{
\op{Tor}^B_{2}(\Ext^1_{A}(T,?),T)\ar[d]^{\tau_{0}} & & \\
\Hom_{A}(T,?)\otimes_{B}T\ar[rr]^{\hspace{0.5cm}\varepsilon}\ar[d] && \id_{\Mod A}\ar@{=}[d]\\
H^0(\RHom_{A}(T,?)\otimes^\L_{B}T)\ar[rr]^{\hspace{1cm}H^0(\delta)}\ar[d] && \id_{\Mod A}\\
\op{Tor}^B_{1}(\Ext^1_{A}(T,?),T) &&
}
\]
\item[3)] For an $A$-module $M$ we have that $\delta_{M}$ is an isomorphism if and only if the following conditions hold:
\begin{itemize}
\item[a)] $(\tau_{i})_{M}$ is an isomorphism for all $i>0$,
\item[b)] $(\tau_{0})_{M}$ is a monomorphism,
\item[c)] $H^0(\delta_{M})$ is an isomorphism,
\item[d)] $\Ext^1_{A}(T,M)\otimes_{B}T=0$.
\end{itemize}
\end{itemize}
\end{lemma}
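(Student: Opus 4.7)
The plan is to exploit the hypothesis $\op{pd}_A T\leq 1$: the complex $\RHom_A(T,M)$ has cohomology concentrated in degrees $0$ and $1$, equal to $\Hom_A(T,M)$ and $\Ext^1_A(T,M)$ respectively, so the canonical truncation yields a triangle in $\cd B$, natural in $M\in\Mod A$,
\[
\Hom_A(T,M)\ra\RHom_A(T,M)\ra\Ext^1_A(T,M)[-1]\ra\Hom_A(T,M)[1].
\]
Applying the triangle functor $?\otimes^\L_B T$ produces a triangle in $\cd A$ whose outer terms I abbreviate $F(M)=\Hom_A(T,M)\otimes^\L_B T$ and $G(M)=\Ext^1_A(T,M)\otimes^\L_B T$. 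The entire argument is then driven by the long exact cohomology sequence of this triangle, via the standard identification $H^{-n}(X\otimes^\L_B T)=\op{Tor}^B_n(X,T)$ for an ordinary module $X$.

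For part (1), the connecting morphism $H^n(G(M)[-1])\ra H^{n+1}F(M)$ at $n=-k-1$ is, after the above identification, a natural map
\[
\tau_k:\op{Tor}^B_{k+2}(\Ext^1_A(T,M),T)\ra\op{Tor}^B_k(\Hom_A(T,M),T)
\]
for each $k\geq 0$. For part (2), the relevant portion of the long exact sequence in cohomological degrees $-1$, $0$ and $1$ (using $H^1F(M)=0$) reproduces exactly the left column of the displayed diagram. The square involving $\varepsilon_M$ and $H^0(\delta_M)$ commutes because applying $?\otimes^\L_B T$ to the truncation $\Hom_A(T,M)\ra\RHom_A(T,M)$ gives, upon taking $H^0$, the factorization $\Hom_A(T,M)\otimes_B T\ra H^0(\RHom_A(T,M)\otimes^\L_B T)\arr{H^0(\delta_M)}M$ of the usual counit $\varepsilon_M$.

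For part (3), $\delta_M$ is an isomorphism in $\cd A$ if and only if $H^n(\RHom_A(T,M)\otimes^\L_B T)=0$ for every $n\neq 0$ and $H^0(\delta_M)$ is an isomorphism. Reading the long exact sequence degree by degree: the statement at $n=0$ is precisely (c); at $n=1$, using $H^1F(M)=0$, one gets $H^1\cong\Ext^1_A(T,M)\otimes_B T$, whose vanishing is (d); and at $n=-k$ with $k\geq 1$ the five-term portion
\begin{align*}
\op{Tor}^B_{k+2}(\Ext^1_A(T,M),T)&\arr{\tau_k}\op{Tor}^B_k(\Hom_A(T,M),T)\ra H^{-k}(\RHom_A(T,M)\otimes^\L_B T)\\
&\ra\op{Tor}^B_{k+1}(\Ext^1_A(T,M),T)\arr{\tau_{k-1}}\op{Tor}^B_{k-1}(\Hom_A(T,M),T)
\end{align*}
forces $H^{-k}=0$ to be equivalent to $\tau_k$ being surjective and $\tau_{k-1}$ being injective. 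Letting $k$ range over $k\geq 1$, these requirements collapse to $\tau_0$ being monic (condition (b)) together with $\tau_i$ being an isomorphism for every $i\geq 1$ (condition (a)). The main subtlety lies in the compatibility asserted in part (2), namely that the map obtained on $H^0$ from the derived truncation actually factorizes the ordinary counit $\varepsilon_M$; this coherence check is handled by fixing a length-one resolution of $T$ by right $A$-projectives carrying a compatible left $B$-action and comparing the two descriptions explicitly.
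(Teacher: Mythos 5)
Your proposal is correct and follows essentially the same route as the paper's proof: the canonical truncation triangle $\Hom_A(T,M)\ra\RHom_A(T,M)\ra\Sigma^{-1}\Ext^1_A(T,M)\ra{}+$ coming from $\op{pd}_A(T)\leq 1$, application of $?\otimes^\L_B T$, and the long exact homology sequence together with $H^{-i}(N\otimes^\L_B T)\cong\op{Tor}^B_i(N,T)$. In fact your degree-by-degree reading of that sequence for part (3) supplies details the paper leaves implicit, and the coherence check you flag for part (2) is likewise glossed over there.
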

\begin{proof}
Let $M$ be a right $A$-module. Consider the triangle
\[\tau_{\leq 0}\RHom_{A}(T,M)\ra\RHom_{A}(T,M)\ra\tau_{\geq 1}\RHom_{A}(T,M)\arr{+}
\]
in $\cd B$ formed by using canonical truncations. Since the right $A$-module $T$ has projective dimension at most $1$, the complex of $B$-modules $\RHom_{A}(T,M)$ has homology concentrated in degrees $0$ and $1$. Thus
\[\tau_{\leq 0}\RHom_{A}(T,M)\cong H^0\RHom_{A}(T,M)\cong\Hom_{\cd A}(T,M)\cong\Hom_{A}(T,M)
\]
and
\[\tau_{\geq 1}\RHom_{A}(T,M)\cong\Sigma^{-1}H^1\RHom_{A}(T,M)\cong\Sigma^{-1}\Hom_{\cd A}(T,\Sigma M)\cong\Sigma^{-1}\op{Ext}^1_{A}(T,M). 
\]
Therefore, there is a triangle in $\cd B$ of the form
\[\Hom_{A}(T,M)\ra\RHom_{A}(T,M)\ra \Sigma^{-1}\op{Ext}^{1}_{A}(T,M)\arr{+}
\]
After applying $?\otimes^\L_{B}T$ we get a triangle
\[\Hom_{A}(T,M)\otimes^\L_{B}T\ra\RHom_{A}(T,M)\otimes^\L_{B}T\ra\Sigma^{-1}\op{Ext}^1_{A}(T,M)\otimes^\L_{B}T\arr{+}
\]
in $\cd A$ which is functorial in $M$. Bearing in mind that, for each $B$-module $N$, we have $H^{-i}(N\otimes^\L_{B}T)\cong \op{Tor}^{B}_{i}(N,T)$ for each $i\geq 0$, and $H^{i}(N\otimes^\L_BT)=0$ for each $i\geq 1$,   the long exact sequence of homologies for the last triangle, when taken  for degrees $\leq 1$,  gives the following exact sequence, which is then natural in $M$:

\[
\xymatrix{ 
H^{-2}(\RHom_A(T,M)\otimes^\L_BT) \ar[r] & \op{Tor}^B_3(\op{Ext}^{1}_A(T,M),T)\ar[r]^{\tau_1} & \op{Tor}^{B}_1(\Hom_A(T,M),T)\ar@{->} `r/3pt[d] `/10pt[l] `^dl[ll] `^d/10pt[dl] [dll] \\                
 H^{-1}(\RHom_A(T,M)\otimes^\L_BT) \ar[r] & \op{Tor}^{B}_2(\op{Ext}^{1}_A(T,M),T) \ar[r]^{\tau_0} & \Hom_A(T,M)\otimes_BT\ar@{->} `r/3pt[d] `/10pt[l] `^dl[ll] `^d/10pt[dl] [dll] \\                
 H^{0}(\RHom_A(T,M)\otimes^\L_BT) \ar[r] & \op{Tor}^{B}_1(\op{Ext}^{1}_A(T,M),T) \ar[r] & 0 \ar@{->} `r/3pt[d] `/10pt[l] `^dl[ll] `^d/10pt[dl] [dll] \\
H^{1}(\RHom_A(T,M)\otimes^\L_BT) \ar[r] & \op{Ext}^{1}_A(T,M)\otimes_BT \ar[r] & 0
    }
\]
From this assertions 1 and 2 clearly follow. On the other hand $\delta_M$ is an isomorphism in $\mathcal{D}A$ if, and only if, $H^i(\RHom_A(T,M)\otimes_B^LT)=0$, for $i\neq 0$, and $H^0(\delta_M)$ is an isomorphism in $\Mod A$. The mentioned long exact sequence proves that the equalities $H^i(\RHom_A(T,M)\otimes_B^LT)=0$, for $i\neq 0$, hold exactly when conditions 3.a, 3.b and 3.d are satisfied. Therefore assertion 3 is also true. 
\end{proof}

\begin{proposition}\label{ordinary case:proposition}
Let $A$ be a right hereditary ordinary algebra, $B$ a dg algebra and $T$ a dg $B$-$A$-bimodule with bounded homology. The following assertions are equivalent:
\begin{itemize}
\item[1)] $\RHom_{A}(T,?):\cd A\ra\cd B$ is fully faithful.
\item[2)] The counit 
\[
\delta_{A^{(\alpha)}}:\RHom_{A}(T,A^{(\alpha)})\otimes^\L_{B}T\ra A^{(\alpha)}
\] 
is an isomorphism in $\cd A$ for each cardinal $\alpha$.
\item[3)] The counit 
\[\delta_{I}: \RHom_{A}(T,I)\otimes^\L_{B}T\ra I
\]
is an isomorphism in $\cd A$ for each injective $A$-module $I$.
\item[4)] The counit
\[\delta_{T^{(\alpha)}}: \RHom_{A}(T,T^{(\alpha)})\otimes^L_{B}T\ra T^{(\alpha)}
\]
is an isomorphism for all cardinals $\alpha$, and there is a cardinal $\beta$ such that $A\in\thick_{\cd A}(T^{(\beta)})$.
\end{itemize}
\end{proposition}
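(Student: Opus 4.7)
The plan is to establish $1) \Rightarrow 2), 3), 4)$ essentially for free, $3) \Rightarrow 2)$ and $4) \Rightarrow 2)$ by thickness arguments, and then to do the main work for $2) \Rightarrow 1)$. Throughout set $F = \RHom_A(T,?) \otimes^\L_B T$ and $\cc = \{M \in \cd A : \delta_M \text{ is an isomorphism}\}$, where $\delta: F \to \id_{\cd A}$ is the counit. By Corollary~\ref{cor.RHom fully faithful with no extras}, condition $1)$ is equivalent to $\cc = \cd A$, and by Corollary~\ref{cor:point-stable form a thick subcategory}, $\cc$ is always a thick subcategory of $\cd A$. Hence $1) \Rightarrow 2)$ and $1) \Rightarrow 3)$ are immediate.

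For $1) \Rightarrow 4)$, only the membership $A \in \thick_{\cd A}(T^{(\beta)})$ requires work. By $1)$, $A \cong \RHom_A(T,A) \otimes^\L_B T$ in $\cd A$. Since $H^*(T)$ is bounded and $A$ is hereditary (so $\Ext^p_A = 0$ for $p \geq 2$), $\RHom_A(T,A)$ has bounded homology and hence admits an h-projective resolution by a bounded complex whose components are free $B$-modules of some cardinality $\beta$; that is, $\RHom_A(T,A) \in \thick_{\cd B}(B^{(\beta)})$. Applying the triangle functor $?\otimes^\L_B T$ and Corollary~\ref{cor.preservation of thickness} yields $A \in \thick_{\cd A}(T^{(\beta)})$. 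Conversely, $4) \Rightarrow 2)$: by hypothesis $T^{(\alpha)} \in \cc$ for every $\alpha$, so $\thick_{\cd A}(T^{(\beta)}) \subseteq \cc$; applying Corollary~\ref{cor.preservation of thickness} to the triangle functor $?^{(\gamma)}: \cd A \to \cd A$ propagates $A \in \thick_{\cd A}(T^{(\beta)})$ to $A^{(\gamma)} \in \thick_{\cd A}(T^{(\beta \gamma)}) \subseteq \cc$, which is condition $2)$. The implication $3) \Rightarrow 2)$ proceeds likewise through modules: thickness of $\cc$ together with the length-one injective resolution available in the hereditary setting places every $A$-module in $\cc$, in particular each $A^{(\alpha)}$.

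The main obstacle is $2) \Rightarrow 1)$. Thickness of $\cc$ and the length-one projective resolution available in the hereditary case first place every $A$-module in $\cc$; the hereditary decomposition $M \cong \bigoplus_n \Sigma^{-n} H^n(M)$ in $\cd A$, obtained by splitting the canonical truncation triangles using $\Ext^p_A = 0$ for $p \geq 2$, then handles every complex with bounded homology as a finite coproduct, again by thickness. The remaining case of unbounded $M$ is where the real work lies, and the plan is to verify the criterion of Corollary~\ref{cor.RHom fully faithful with no extras}(4): $\delta_A$ is an isomorphism because $A \in \cc$, so it suffices to show that $F$ preserves small coproducts. From $\delta_{A^{(\alpha)}}$ being an isomorphism and the naturality of $\delta$ against the coproduct inclusions, the canonical map $F(A)^{(\alpha)} \to F(A^{(\alpha)})$ is an isomorphism. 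The hard part of the argument is then promoting coproduct-preservation from the compact generator $A$ to all of $\cd A$; this is where the combination of $T$ having bounded homology (so that $\RHom_A(T,?)$ has bounded amplitude shift and interacts well with the hereditary splitting) and the structure of $\cd A$ as a direct sum of shifts of modules is decisive.
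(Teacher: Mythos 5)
There are two genuine problems.

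First, your argument for $1)\Rightarrow 4)$ contains a false step: bounded homology of $\RHom_A(T,A)$ does \emph{not} imply that it admits an h-projective resolution by a \emph{bounded} complex of free $B$-modules, i.e.\ that it lies in $\thick_{\cd B}(B^{(\beta)})$. Already for an ordinary algebra $B$, a module with homology concentrated in one degree but of infinite projective dimension (e.g.\ the simple module over $k[x]/(x^2)$) lies outside $\thick_{\cd B}(B^{(\beta)})$ for every $\beta$, since by Proposition~\ref{description thickT} the objects of that thick subcategory are direct summands of bounded complexes of projectives; for a dg algebra $B$ the situation is only worse. What bounded homology actually yields is a presentation of $\RHom_A(T,A)$ as a \emph{Milnor colimit} of a tower $P_0\ra P_1\ra\cdots$ in which $P_0$ and all successive cones are finite coproducts of objects $\Sigma^m B^{(\alpha_m)}$ — so each finite stage $P_k$ lies in $\thick_{\cd B}(B^{(\beta)})$, but the colimit a priori only lies in $\Tria$. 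The missing idea is to apply $?\otimes^\L_B T$, observe that $A\cong\op{Mcolim}(P_n\otimes^\L_B T)$, and use the \emph{compactness of $A$ in $\cd A$} to factor the identity of $A$ through some finite stage $P_k\otimes^\L_B T$; this exhibits $A$ as a direct summand of an object of $\thick_{\cd A}(T^{(\beta)})$. Without the compactness argument the implication does not go through.

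Second, in $2)\Rightarrow 1)$ you explicitly defer ``the hard part'', namely promoting coproduct-preservation from $A^{(\alpha)}$ to arbitrary coproducts, and this is precisely where the content of the implication lies. The workable argument does not need $F$ to preserve \emph{all} small coproducts (which is what criterion 4) of Corollary~\ref{cor.RHom fully faithful with no extras} would require, and which is not directly accessible from hypothesis 2)); it suffices to show that every object of $\cd A$ is coreflective. After placing every shifted module $\Sigma^m X$ in $\op{Coref}$ by thickness and $\op{pd}_A\le 1$, one writes an arbitrary $M$ as $\coprod_n\Sigma^{-n}H^nM\cong\prod_n\Sigma^{-n}H^nM$ and must commute $\RHom_A(T,?)$ past this coproduct. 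This is done by applying the coproduct-versus-product comparison of Lemma~\ref{coproduct-product} \emph{twice}: once to the family $\{\Sigma^{-n}H^nM\}$ (each member has homology in a single degree), so that $\RHom_A(T,?)$, which always commutes with products, may be applied termwise; and once more to the family $\{\RHom_A(T,\Sigma^{-n}H^nM)\}$, whose members have homology in a window of uniformly bounded width shifted by $n$ because $T$ is quasi-isomorphic to a bounded complex of projectives (using hereditariness and the bounded homology of $T$). Since $?\otimes^\L_B T$ preserves coproducts, the counit of the coproduct is then the coproduct of counits, each an isomorphism. You name all the right ingredients but the step that makes them interact — the locally finite coproduct $=$ product identification — is never carried out, so as written the central implication is not proved. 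The remaining implications ($1)\Rightarrow 2),3)$; $3)\Rightarrow 2)$; $4)\Rightarrow 2)$ via $\thick$-preservation under the coproduct functor) are fine and agree in substance with the paper.
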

\begin{proof}
Implications 1) $\Rightarrow$ 2) and 1) $\Rightarrow$ 3) follow from part 2) of Lemma~\ref{equivalence between reflective and coreflective objects}.

2) $\Rightarrow$ 1) Let $\op{Coref}$ be the full subcategory of $\cd A$ formed by the coreflective objects with respect to the adjoint pair
\[\xymatrix{\cd A\ar@<1ex>[d]^{\RHom_{A}(T,?)} \\
\cd B\ar@<1ex>[u]^{?\otimes^\L_{B}T}
}
\]
Condition 2) says that $A^{(\alpha)}$ belongs to $\op{Coref}$ for each cardinal $\alpha$. Using that $\op{Coref}$ is thick (see Lemma~\ref{(co)reflective objects form a thick subcategory}) and that every $A$-module admits a finite projective resolution, we deduce that each $A$-module $X$ satisfies that $\Sigma^mX\in\op{Coref}$ for any integer $m\in\Z$. The fact that $A$ is right hereditary implies that for any complex $M$ of $A$-modules we have
\[\coprod_{n\in\Z}\Sigma^{-n}H^nM\cong M\cong\prod_{n\in\Z}\Sigma^{-n}H^nM.
\]
The task is then reduced to prove that if $\{M_{n}\}_{n\in\Z}$ is a family of $A$-modules, then $\coprod_{n\in\Z}\Sigma^nM_{n}\in\op{Coref}$. Indeed, thanks to Lemma~\ref{coproduct-product}, we have
\[
\RHom_{A}(T,\coprod_{n\in\Z}\Sigma^nM_{n})\cong\RHom_{A}(T,\prod_{n\in\Z}\Sigma^nM_{n})\cong\prod_{n\in\Z}\RHom_{A}(T,\Sigma^nM_{n}).
\]
Using that $T$ is isomorphic in $\cd A$ to a bounded complex of projective modules, we get that the family $\{\RHom_{A}(T,\Sigma^nM_{n})\}_{n\in\Z}$ satisfies the hypothesis of Lemma~\ref{coproduct-product}. Thus we still have
\[\prod_{n\in\Z}\RHom_{A}(T,\Sigma^nM_{n})\cong\coprod_{n\in\Z}\RHom_{A}(T,\Sigma^nM_{n}).
\]
Summarizing, we have
\[
\RHom_{A}(T,\coprod_{n\in\Z}\Sigma^nM_{n})\cong \coprod_{n\in\Z}\RHom_{A}(T,\Sigma^nM_{n}).
\]
It immediatly follows that $\coprod_{n\in\Z}\Sigma^{n}M_{n}$ is in $\op{Coref}$, as desired.

3) $\Rightarrow$ 1) The fact that each $A$-module $M$ admits an injective resolution 
\[0\ra M\ra I^0\ra I^1\ra 0
\]
implies that $\delta_{M}$ is an isomorphism and, hence, $M\in\op{Coref}$. Now use the same argument of the implication 2) $\Rightarrow$ 1).

1) $\Rightarrow$ 4) 

{\em Step 1: $\RHom_{A}(T,A)$ is a special Milnor colimit:} let us prove that we can express $\RHom_{A}(T,A)$ as a Milnor colimit in $\cd B$ of a sequence
\[P_{0}\ra P_{1}\ra P_{2}\ra\dots
\]
where $P_{0}$ and the cone over each morphism is a finite coproduct of objects of the form $\Sigma^m B^{(\alpha_{m})}$ for some integer $m$ and some cardinal $\alpha_{n}$. Indeed, to express $\RHom_{A}(T,A)$ as a Milnor colimit of finite extensions of coproducts of shifts of copies of $B$ we proceed as in the proof of \cite[Theorem 5.2]{KellerDDC} (see also \cite[Theorem 12.2]{KellerNicolas}). We start by considering a right approximation
\[\pi_{0}:P_{0}\ra\RHom_{A}(T,A)
\]
of $\RHom_{A}(T,A)$ with respect to the full subcategory formed by small coproducts of shifts of copies of $B$. But using that $T$ has bounded homology and that $A$ is right hereditary, we prove that $T$ is isomorphic in $\cd A$ to a bounded complex of projective $A$-modules. This implies that $\RHom_{A}(T,A)$ has bounded homology, and so $P_{0}$ can be taken to be a finite coproduct of objects of the form $\Sigma^m B^{(\alpha_{m})}$ for some integer $m$ and some cardinal $\alpha_{n}$. Consider the triangle
\[\xymatrix{
C_{0}\ar[r]^{\alpha_{0}} & P_{0}\ar[r]^{\pi_{0}\hspace{1cm}} & \RHom_{A}(T,A)\ar[r] & \Sigma C_{0}.
}
\]
The next step is to consider a right approximation
\[\beta_{0}: Z_{0}\ra C_{0}
\]
with respect to the full subcategory formed by small coproducts of shifts of copies of $B$. We define $P_{1}$ by using the triangle
\[
\xymatrix{
Z_{0}\ar[r]^{\alpha_{0}\beta_{0}} & P_{0}\ar[r] & P_{1}\ar[r] & \Sigma Z_{0}.
}
\]
The fact that both $P_{0}$ and $\RHom_{A}(T,A)$ have bounded homology, implies that $C_{0}$ also has bounded homology. Hence, we can take $Z_{0}$ to be finite coproduct of objects of the form $\Sigma^m B^{(\alpha_{m})}$ for some integer $m$ and some cardinal $\alpha_{n}$. Continuing in this way, we get the required sequence.

{\em Step 2:} After step 1, and taking into account the isomorphism
\[A\cong \RHom_{A}(T,A)\otimes^\L_{B}T,
\]
we conclude that $A$ is the Milnor colimit in $\cd A$ of a sequence
\[P_{0}\otimes^\L_{B}T\ra P_{1}\otimes^\L_{B}T\ra\dots
\]
where $P_{0}\otimes^\L_{B}T$ and all successive cones are finite coproducts of objects of the form $\Sigma^m T^{(\alpha_{m})}$ for some integer $m$ and some cardinal $\alpha_{n}$. The compactness of $A$ implies that the isomorphism
\[A\arr{\sim}\op{Mcolim}P_{n}\otimes^\L_{B}T
\]
factors through the canonical map
\[P_{k}\otimes^\L_{B}T\ra\op{Mcolim}P_{n}\otimes^\L_{B}T
\]
for some $k\in\N$. Therefore, $A$ is a direct summand of $P_{k}\otimes^\L_{B}T$, which belongs to $\thick_{\cd A}(T^{(\beta)})$ for some cardinal $\beta$.

4) $\Rightarrow$ 2) We know $A\in\thick_{\cd A}(T^{(\beta)})$ for some cardinal $\beta$. After \cite[Lemma 2.2.2]{BondalVanDenBergh} we have 
\[ \thick_{\cd A}(T^{(\beta)})=\op{add}_{\cd A}(\tria_{\cd A}(T^{(\beta)})).
\]
Thus $A$ is a direct summand of an object of $\tria_{\cd A}(T^{(\beta)})$. Hence, for any cardinal $\alpha$, we have that $A^{(\alpha)}$ is the direct summand of an object of $\tria_{\cd A}(T^{(\alpha\cdot \beta)})$. On the other hand, we are assuming that $T^{(\alpha\cdot\beta)}$ belongs to the set $\op{Coref}$ of coreflective objects with respect to the adjunction $(?\otimes^\L_{B}T,\RHom_{A}(T,?))$, which is  a thick subcategory of $\cd A$ (see Lemma~\ref{(co)reflective objects form a thick subcategory}). Thus $A^{(\alpha)}\in\op{Coref}$.
\end{proof}

\begin{corollary}\label{1tilting inducing ff}
Let $A$ and $B$ be ordinary algebras. Assume $A$ is right hereditary. Let $T$ be a $B$-$A$-bimodule. With the terminology as in Lemma 7.17, consider the following assertions:
\begin{itemize}
\item[1)] $\RHom_{A}(T,?):\cd A\ra\cd B$ is fully faithful.
\item[2)]  For each cardinal $\alpha$ the module $A^{(\alpha)}$ satisfies the following conditions:
\begin{itemize}
\item[a)] $(\tau_{i})_{A^{(\alpha)}}$ is an isomorphism for all $i>0$,
\item[b)] $(\tau_{0})_{A^{(\alpha)}}$ is a monomorphism,
\item[c)] $H^0(\delta_{A^{(\alpha)}})$ is an isomorphism,
\item[d)] $\Ext^1_{A}(T,A^{(\alpha)})\otimes_{B}T=0$,
\end{itemize}
where the morphisms $\tau_{i}$ are defined in Lemma~\ref{relating the two counits}.
\item[3)] For each injective right $A$-module $I$ the following conditions hold:
\begin{itemize}
\item[a)] $\op{Tor}^B_{i}(\Hom_{A}(T,I),T)=0$ for each $i>0$,
\item[b)] The counit
\[
\varepsilon_{I}:\Hom_{A}(T,I)\otimes_{B}T\ra I
\]
is an isomorphism.
\end{itemize}
\item[4)] $T$ satisfies:
\begin{itemize}
\item[a)] There is a short exact sequence $0\ra A\ra T_{0}\ra T_{1}\ra 0$ in $\Mod A$ where $T_{i}\in\op{Add}(T)$.
\item[b)] $\op{Tor}^B_{i}(\Hom_{A}(T,T^{(\alpha)}), T)=0$ for each $i>0$ and each cardinal $\alpha$,
\item[c)] The counit
\[\varepsilon_{T^{(\alpha)}}:\Hom_{A}(T,T^{(\alpha)})\otimes_{B}T\ra T^{(\alpha)}
\]
is an isomorphism for each cardinal $\alpha$.
\end{itemize}
\end{itemize}
Then 1), 2) and 3) are equivalent, and they are implied by 4). Moreover, if $\Ext^1_{A}(T,T^{(\alpha)})=0$ for each cardinal $\alpha$, the four conditions are equivalent.
\end{corollary}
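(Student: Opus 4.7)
The plan is to use Proposition~\ref{ordinary case:proposition} as the main reduction: for $A$ right hereditary, it characterises the full-faithfulness of $\RHom_A(T,?)$ via the counit $\delta$ being an isomorphism on any of the families $\{A^{(\alpha)}\}_{\alpha}$, $\{T^{(\alpha)}\}_{\alpha}$ (together with $A\in\thick_{\cd A}(T^{(\beta)})$ for some $\beta$), or the injective right $A$-modules. Lemma~\ref{relating the two counits}, part 3), then converts the statement ``$\delta_M$ is an isomorphism'' into the bundle of module-theoretic conditions featuring $\varepsilon_M$, the $\op{Tor}$-groups of $\Hom_A(T,M)$ and $\Ext^1_A(T,M)$ against $T$, and the connecting maps $\tau_i$.

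For $1)\Leftrightarrow 2)$, applying Lemma~\ref{relating the two counits}, part 3), with $M=A^{(\alpha)}$ directly rewrites the condition ``$\delta_{A^{(\alpha)}}$ is an isomorphism for every cardinal $\alpha$'' as the conjunction of 2.a)--2.d). For $1)\Leftrightarrow 3)$, when $I$ is injective one has $\Ext^1_A(T,I)=0$, so the truncation triangle from the proof of Lemma~\ref{relating the two counits}, part 1), degenerates to an isomorphism $\RHom_A(T,I)\cong\Hom_A(T,I)$ in $\cd B$; then $\delta_I$ is identified with the natural morphism $\Hom_A(T,I)\otimes^\L_B T\to I$, whose being an isomorphism in $\cd A$ is exactly the conjunction of the $\op{Tor}$-vanishings 3.a) and the bijectivity of $\varepsilon_I$ in 3.b).

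For $4)\Rightarrow 1)$, I would verify criterion 3): given an injective $I$, propagate the $\op{Add}(T)$-level conditions 4b) and 4c) (which, by passage to direct summands, hold for every $X\in\op{Add}(T)$) to $I$ via the short exact sequence in 4a). Concretely, applying $\Hom_A(?,I)$ to the coproducted sequences $0\to A^{(\gamma)}\to T_0^{(\gamma)}\to T_1^{(\gamma)}\to 0$ (using that $\Ext^1_A(T_i^{(\gamma)}, I)=0$ for $I$ injective) produces short exact sequences that realise $I$ and $\Hom_A(T,I)$ in terms of $\op{Add}(T)$-data; tensoring with $T$ over $B$ and comparing with the $\varepsilon$-isomorphisms on the $\op{Add}(T)$-pieces yields 3.a) and 3.b) for $I$. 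For $1)\Rightarrow 4)$ under the extra vanishing $\Ext^1_A(T,T^{(\alpha)})=0$: the vanishing collapses $\RHom_A(T,T^{(\alpha)})\cong\Hom_A(T,T^{(\alpha)})$, so the $\delta_{T^{(\alpha)}}$-isomorphism furnished by Proposition~\ref{ordinary case:proposition} produces an isomorphism $\Hom_A(T,T^{(\alpha)})\otimes^\L_B T\cong T^{(\alpha)}$ in $\cd A$, whose $0$th cohomology gives 4c) and whose negative-degree cohomology gives 4b). Condition 4a) then follows from $A\in\thick_{\cd A}(T^{(\beta)})$ (supplied by Proposition~\ref{ordinary case:proposition}) combined with Proposition~\ref{A in thickT} applied to $T^{(\beta)}$ (whose hypothesis $\Ext^p_A(T^{(\beta)},T^{(\beta)})=0$ for $p>0$ is guaranteed by the extra vanishing and heredity), with the heredity of $A$ shortening the resulting resolution to length at most~$1$.

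The main obstacle is the implication $4)\Rightarrow 1)$, which demands transferring the $\op{Add}(T)$-level statements in 4b) and 4c) to arbitrary injective $A$-modules; the short exact sequence in 4a) is the essential ingredient that makes this transfer possible, and Lemma~\ref{relating the two counits} is the translation tool that ensures the $\op{Tor}$- and $\varepsilon$-level data patch up to $\delta_I$ being an isomorphism.
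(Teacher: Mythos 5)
Your overall architecture is the paper's: reduce everything to Proposition~\ref{ordinary case:proposition} and translate with Lemma~\ref{relating the two counits}. Your treatment of $1)\Leftrightarrow 2)$ and $1)\Leftrightarrow 3)$ is exactly what the paper intends (it declares both ``clear''), and your proof of $1)\Rightarrow 4)$ under the extra hypothesis $\Ext^1_A(T,T^{(\alpha)})=0$ is the paper's argument: Proposition~\ref{ordinary case:proposition} supplies $A\in\thick_{\cd A}(T^{(\beta)})$ and the isomorphisms $\delta_{T^{(\alpha)}}$, Proposition~\ref{A in thickT} gives a finite $\op{add}(T^{(\beta)})$-coresolution of $A$, and the vanishing of $\Ext^2$ together with the vanishing of $\Ext^1$ on $\op{Add}(T)$ splits the tail down to length one; taking cohomology of $\delta_{T^{(\alpha)}}$ (with $\RHom$ collapsed to $\Hom$) gives 4b) and 4c).

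The one step that does not close is $4)\Rightarrow 1)$, which you correctly flag as the main obstacle but whose proposed resolution does not work as described. Applying $\Hom_A(?,I)$ to $0\ra A^{(\gamma)}\ra T_0^{(\gamma)}\ra T_1^{(\gamma)}\ra 0$ produces the objects $\Hom_A(T_i^{(\gamma)},I)$, which are \emph{products} of copies of $\Hom_A(T_i,I)$ and carry no module structure to which conditions 4b), 4c) apply: those conditions control the covariant functor $\Hom_A(T,?)$ evaluated at \emph{coproducts} of $T$, not the contravariant $\Hom_A(?,I)$, so there is no way to ``compare with the $\varepsilon$-isomorphisms on the $\op{Add}(T)$-pieces'' to deduce 3a) and 3b) for $I$. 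The natural route is instead assertion 4) of Proposition~\ref{ordinary case:proposition}: 4a) gives $A\in\thick_{\cd A}(T^{(\beta)})$, and one must show $\delta_{T^{(\alpha)}}$ is an isomorphism. But 4b) and 4c) only say that $\Hom_A(T,T^{(\alpha)})\otimes^\L_B T\ra T^{(\alpha)}$ is an isomorphism; by Lemma~\ref{relating the two counits}, part 3), one still needs control of the terms $\op{Tor}^B_j(\Ext^1_A(T,T^{(\alpha)}),T)$ — in particular $\Ext^1_A(T,T^{(\alpha)})\otimes_B T=0$ — before $\delta_{T^{(\alpha)}}$ can be identified with the map you have. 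This is precisely the missing idea; be aware that the paper itself disposes of this implication with the single phrase ``it is also clear that 4) implies 1)'', so your proposal is no less complete than the published argument here, but the sketch you give would not compile into a proof.
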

\begin{proof}
After Lemma~\ref{relating the two counits} and Proposition~\ref{ordinary case:proposition}, the equivalence between 1), 2) and 3) is clear, and it is also clear that 4) implies 1). Assume now 1) holds and that $\op{Ext}^1_{A}(T,T^{(\alpha)})=0$ for each cardinal, and let us prove 4).  By Proposition~\ref{ordinary case:proposition} we know there exists a cardinal $\beta$ such that $A\in\thick_{\cd A}(T^{(\beta)})$. Since $\Ext^p_{A}(T^{(\beta)},T^{(\beta)})=0$ for all $p>0$, Proposition~\ref{A in thickT} says that there is an exact sequence
\[0\ra A\ra T^0\arr{d^0} T^1\arr{d^1}\dots\arr{d^{n-1}} T^n\ra 0
\]
with $T^{i}\in\op{Add}_{\Mod A}(T)$ for all $i=0,1,\dots,n$. Now put $Z^{i}=\ker(d^{i})$. If $n>1$ then for any $A$-module $M$ we have an epimorphism 
\[\Ext^1_{A}(M,T^{n-1})\ra\Ext^1_{A}(M,Z^{n-1})
\]
since $\Ext^2_{A}(?,?)=0$. Therefore, $\Ext^1_{A}(?,Z^{n-1})$ vanishes on $\op{Add}_{\Mod A}(T)$ because so does $\Ext^1_{A}(?,T^{n-1})$. It follows that the sequence
\[0\ra Z^{n-1}\ra T^{n-1}\ra T^n\ra 0
\]
splits and so $Z^{n-1}\in\op{Add}_{\Mod A}(T)$. Repeating the argument we get that the objects $Z^{n-1},\dots, Z^1\in\op{Add}_{\Mod A}(T)$ and so condition a) holds.

Conditions b) and c) follow easily after applying $H^{i}$ to the counit
\[\delta_{T^{(\alpha)}}:\RHom_{A}(T,T^{(\alpha)})\otimes^\L_{B}T\ra T^{(\alpha)}.
\]
\end{proof}

\begin{remark}
If in Corollary~\ref{1tilting inducing ff} condition $\Ext^1_{A}(T,T^{(\alpha)})=0$ holds, then $T$ is $1$-tilting (see Definition~\ref{tilting module}).
\end{remark}

\subsection{Study of some natural families of modules}

\begin{notation}\label{notation of types of modules}
Let $A$ be an ordinary algebra. Let $\cp^{<\infty}(A)$ be the full subcategory of $\Mod A$ formed by those modules with finite projective dimension. Consider the following full subcategories of $\cp^{<\infty}(A)$:
\begin{itemize}
\item $\ce$, formed by those modules $T$ such that $\Ext^{p}_{A}(T,T)=0$ for $p>0$.
\item $\ct$, formed by those modules which are $(n\text{-})$tilting modules.
\item $\ch^b$, formed by those modules such that as modules over their endomorphism algebra have a finite projective resolution with finitely generated terms.
\item $\cR$, formed by those modules $T$ such that the functor $\RHom_{A}(T,?):\cd A\ra\cd B$ is fully faithful, where $B=\End_{A}(T)$.
\item $\widehat{\cR}$, formed by those modules $T$ such that the functor $\RHom_{A}(T,?):\cd A\ra\cd B$ is fully faithful, preserves compact objects and $B$ is in its essential image, where $B=\End_{A}(T)$.
\end{itemize}
\end{notation}

\begin{theorem}\label{main result ordinary case}
Let $A$ be an ordinary algebra. If $T$ is a right $A$-module, we will use the letter $B$ to refer to $\End_{A}(T)$. The following assertions hold:
\begin{itemize}
\item[1)] Using the Notation~\ref{notation of types of modules} we have: $\widehat{\cR}=\ce\cap\cR\cap\ch^b$ and $\ct\cap\widehat{\cR}$ is precisely the class of good tilting modules.
\item[2)] If $\Ext^{p}_{A}(T,T)=0$ for each $p>0$ and $\RHom_{A}(T,?):\cd A\ra \cd B$ is fully faithful, then $T$, as a left $B$-module, has a projective resolution consisting of finitely generated left $B$-modules.
\item[3)] There are right $A$-modules $T$ satisfying the following conditions:
\begin{itemize}
\item[a)] $\op{pd}_{A}T\leq 1$,
\item[b)] the functor $\RHom_{A}(T,?):\cd A\ra\cd B$ is fully faithful, preserves compact objects and $B$ is in its essential image,
\item[c)] $\Ext_A^1(T,T^{(\mathbf{N})})\neq 0$, and so $T$ is not a tilting module.
\end{itemize}
\item[4)] There are right $A$-modules $T$ satisfying the following conditions:
\begin{itemize}
\item[a)] $\op{pd}_{A}T\leq 1$,
\item[b)] the functor $\RHom_{A}(T,?):\cd A\ra\cd B$ is fully faithful,
\item[c)] $T$, regarded as a left $B$-module, has a finite projective resolution with finitely generated terms,
\item[d)] $\Ext^{1}_{A}(T,T)\neq 0$.
\end{itemize}
\item[5)] There are right $A$-modules $T$ satisfying the following conditions:
\begin{itemize}
\item[a)] $T$ is a $1$-tilting module,
\item[b)] $T$, regarded as a left $B$-module, does not have a projective resolution with finitely generated terms (and hence $\RHom_{A}(T,?):\cd A\ra\cd B$ is not fully faithful).
\end{itemize}
\item[6)] There are right $A$-modules $T$ satisfying the following conditions:
\begin{itemize}
\item[a)] $\op{pd}_{A}T\leq 1$,
\item[b)] $T$, regarded as a left $B$-module, has a finite projective resolution with finitely generated terms,
\item[c)] $\Ext^{p}_{A}(T,T)=0$ for each $p>0$,
\item[d)] $\RHom_{A}(T,?):\cd A\ra\cd B$ is not fully faithful.
\end{itemize}
\end{itemize}
\end{theorem}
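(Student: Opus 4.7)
The plan is to deduce parts (1) and (2) from the dg-categorical results already established, and to settle (3)--(6) by concrete examples.

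For part (1), the inclusion $\widehat{\cR} \subseteq \ce \cap \cR \cap \ch^b$ follows at once from the equivalence (1)$\Leftrightarrow$(2) of Corollary~\ref{cor.tilting buenos via RHom}: the defining condition of $\widehat{\cR}$ is statement (2) there, and unfolding (1b), (1c) together with (2) yields the three required memberships. For the reverse inclusion I would aim at verifying condition (3c) of Corollary~\ref{cor.tilting buenos via RHom}; since conditions (3a) and (3b) follow respectively from $B = \End_A(T)$ and from $T \in \ce$, this is the only missing piece. By Proposition~\ref{A in thickT}, (3c) amounts to $A \in \thick_{\cd A}(T)$. To obtain it, I would exploit the counit isomorphism $\RHom_A(T,A) \otimes^\L_B T \cong A$ from Corollary~\ref{cor.RHom fully faithful with no extras} together with a duality argument using the finite f.g.\ projective resolution of $_BT$ to show $\RHom_A(T,A) \in \per B$; applying $?\otimes^\L_B T$ to a finite thick filtration of $\RHom_A(T,A)$ by copies of $B$ then places $A$ in $\thick_{\cd A}(T)$. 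The identification $\ct \cap \widehat{\cR} = \{\text{good tilting modules}\}$ is then immediate from Corollary~\ref{some characterization of good tilting}.

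For part (2), I will adapt Step~1 of the proof of Proposition~\ref{ordinary case:proposition} (itself in the spirit of the construction in \cite{KellerDDC}) to $T$ viewed as an object of $\cd(B^\text{op})$. The fully faithfulness of $\RHom_A(T,?)$ together with the isomorphism $\RHom_A(T,T) \cong B$ (which follows from $\Ext^p_A(T,T) = 0$) enables an iterated right-approximation construction yielding a homotopy-colimit expression of $_BT$ whose successive pieces are finite coproducts of shifts of $_BB$. The main obstacle here is the absence of any assumption on the projective dimension of $T$ as a $B$-module, so the resolution cannot be truncated below and only right-boundedness with finite generation in each degree is obtained.

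Parts (3)--(6) will be settled by exhibiting explicit examples that show the various listed conditions are independent. For (3), which calls for a module in $\widehat{\cR}$ that fails to be tilting, I would take $A$ a hereditary ring and $T$ obtained through a universal localization construction, so that $T \in \widehat{\cR}$ but $\Ext^1_A(T, T^{(\alpha)}) \neq 0$ for a large cardinal $\alpha$; examples of this nature can be extracted from \cite{Chen-Xi2}. For (4), one further modifies the construction so that $\Ext^1_A(T,T) \neq 0$ while $T \in \cR \cap \ch^b$ persists. For (5), the Lukas tilting module over a tame hereditary algebra (see \cite{BK}) is $1$-tilting yet does not admit a finitely generated projective resolution as a left module over its endomorphism ring. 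Finally for (6), one exhibits a module satisfying (6a--c) but for which $A$ is not in $\thick_{\cd A}(T)$; by the equality $\widehat{\cR} = \ce \cap \cR \cap \ch^b$ proved in (1), this forces $\RHom_A(T,?)$ to fail to be fully faithful. The main difficulty throughout is the case-by-case homological verification in each example.
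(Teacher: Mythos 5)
Your proposal is an outline with genuine gaps at the points where the real work happens. In part (1), the reverse inclusion $\ce\cap\cR\cap\ch^b\subseteq\widehat{\cR}$ rests on showing $\RHom_A(T,A)\in\per B$ by ``a duality argument using the finite f.g.\ projective resolution of $_BT$''. I do not see how to make this non-circular: the identification of $\RHom_A(T,A)$ with the perfect complex $\RHom_{B^\text{op}}(T,B)$ (Remark~\ref{different but isomorphic duals}) is itself a consequence of the conditions of Theorem~\ref{special localization result}, which is precisely what you are trying to establish, and a finite f.g.\ projective resolution of $_BT$ gives no a priori control of the right $B$-modules $\Ext^i_A(T,A)$. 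The paper avoids this entirely: $T\in\per(B^\text{op})$ forces $?\otimes^\L_BT$ to preserve products, hence to have a left adjoint by Brown representability (Lemma~\ref{lemma:existence of left adjoints}), and that left adjoint is fully faithful by Lemma~\ref{two-sided fully faithful adjoint} because $\RHom_A(T,?)$ is; this verifies condition 6) of Theorem~\ref{special localization result} and yields $T\in\widehat{\cR}$ without ever identifying $\RHom_A(T,A)$ with a perfect complex beforehand. Similarly, in part (2) the Keller-style approximation you invoke only produces pieces that are possibly \emph{infinite} coproducts of shifts of $B$ (as in Step~1 of Proposition~\ref{ordinary case:proposition}), so it gives right-boundedness but not finite generation. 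The finiteness in the paper comes from a different mechanism: the counit yields $B^\alpha\otimes^\L_BT\cong T^\alpha$ for every cardinal $\alpha$, and Proposition~\ref{tensoring preserves products} (ultimately Lenzing's criterion that $M$ is finitely presented iff $B^\alpha\otimes_BM\ra M^\alpha$ is bijective) converts this into a right bounded complex of finitely generated projectives. Your sketch contains no substitute for this step.

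Parts (3)--(6) are existence statements and you do not actually exhibit or verify any example. Pointing to ``a universal localization construction'' and to \cite{Chen-Xi2} for (3), to an unspecified ``modification'' for (4), and asserting without proof that the Lukas tilting module fails condition (5b), does not establish anything; the paper's examples each require nontrivial verification (Faith's theorem on $\Sigma$-injectivity for (3), the Gabriel--Popescu theorem for (4), an explicit computation over the first Weyl algebra for (5)). The one piece of (3)--(6) that is sound as written is your deduction of (6d) from (6a)--(6c) together with $A\notin\thick_{\cd A}(T)$, which correctly uses the equality proved in (1) and parallels the paper's use of Corollary~\ref{1tilting inducing ff}; but even there the module itself (a finitely generated projective non-generator over a hereditary Artin algebra, in the paper) is left unexhibited.
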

\begin{proof}
1) If $T\in\widehat{\cR}$, then Theorem~\ref{special localization result} says that $T\in\per(B^\text{op})$, and that $\REnd_{A}(T)\cong B$, from which we deduce that $T\in\ce\cap\ch^b$. Conversely, if $T\in\ce\cap\cR\cap\ch^b$, then $T\in\per(B^\text{op})$ and so $?\otimes^\L_{B}T:\cd B\ra\cd A$ preserves products (see the implication $1)$-$5)\Rightarrow 6)$ in the proof of Theorem~\ref{special localization result}). Then, by Lemma~\ref{lemma:existence of left adjoints}, we know that $?\otimes^\L_{B}T$ has a left adjoint which, by Lemma~\ref{two-sided fully faithful adjoint}, is fully faithful. On the other hand, the fact that $B=\End_{A}(T)$ and $T\in\ce$ imply that the unit map $\lambda: B\ra\RHom_{A}(T,B\otimes^\L_{B}T)$ is an isomorphism. It follows that condition $6)$ in Theorem~\ref{special localization result} holds, so that $T\in\widehat{\cR}$.

The fact that $\ct\cap\widehat{\cR}$ is the class of good tilting modules is a direct consequence of Corollary~\ref{some characterization of good tilting}.

2) From the hypothesis we have $\op{Ext}^{i}_{A}(T,T^{\alpha})=0$ for each $i>0$ and each cardinal $\alpha$. Thus
\[\RHom_{A}(T,T^{\alpha})\cong\Hom_{A}(T,T^{\alpha})\cong B^\alpha
\]
in $\cd B$. Now, using the counit of the adjunction $(?\otimes^\L_{B}T,\RHom_{A}(T,?))$, we have an isomorphism
\[B^\alpha\otimes^\L_{B}T\cong\RHom_{A}(T,T^\alpha)\otimes^\L_{B}T\arr{\sim}T^\alpha.
\]
Using Proposition~\ref{tensoring preserves products} below we finish the proof.

3) Assume $A$ is a non-N\oe therian right hereditary algebra. Let us take
\[T=E(A)\oplus E(A)/A\oplus I,
\]
where $E(A)$ is an injective envelope of $A$ and $I$ is any injective cogenerator containing a copy of each cyclic $A$-module as a submodule. Then $T$ satisfies condition 3) of Corollary~\ref{cor.tilting buenos via RHom}, 
and so condition b) of 3) holds. However, by a result of Faith (see \cite[Theorems 25.1 and 25.3]{AF}) we know that $T^{(\N)}$ is not injective. Now, we apply the argument of the second step in \cite[Lemma 7]{NicolasSaorinLiftingRestricting} to see that $\op{Ext}^{1}_{A}(T,T^{(\N)})\neq 0$.

4) Let $A$ be a right hereditary algebra and let $T$ be a generator of $\Mod A$. Then we have a decomposition $A\oplus X=T^{n}$ in the category of right $A$-modules, for some natural number $n$. From here we get a decomposition
\[T\oplus\Hom_{A}(X,T)=B^n
\]
in the category of left $B$-modules. In particular, $T$ is a finitely generated and projective left $B$-module, and so it is flat over $B$. In particular, conditions  a) and b) of part 3) of Lemma~\ref{relating the two counits} hold, and $H^0(\delta_{M})$ gets identified with $\varepsilon_{M}$ for any module $M$. But Gabriel-Popescu theorem (see \cite[Theorem X.4.1]{Stenstrom}) implies that $\varepsilon_{M}$ is an isomorphism. According to assertion 2) of Corollary~\ref{1tilting inducing ff}, our task reduces to find the generator $T$ such that $\op{Ext}^{1}_{A}(T,T)\neq 0$ and $\op{Ext}^{1}_{A}(T,A^{(\alpha)})\otimes_{B}T=0$ for each cardinal $\alpha$. To do that we take any $A$-module $X$ such that $\Hom_{A}(X,A)=0=\op{Ext}^{1}_{A}(X,X)$ and $\op{Ext}^{1}_{A}(X,A)\neq 0$ (for instance, $X$ can be any non-projective simple module, assuming $A$ to be an Artin algebra or, more generally, a right artinian algebra). Our choice is 
\[T=A\oplus X, 
\]
in which case
\[B=\left[\begin{array}{cc} A & 0 \\ X & B'\end{array}\right],
\]
with $B'=\End_{A}(X)$. Note that in the category of left $B$-modules we have
\[T\cong B\cdot e_{1}
\]
where
\[e_{1}=\left[\begin{array}{cc}1 & 0 \\ 0 & 0\end{array}\right].
\]
When we view right $B$-modules as `rows' in the customary way (see \eg \cite[Proposition III.2.2]{ARS}), we get
\begin{align}
\op{Ext}^1_{A}(T,A^{(\alpha)})\otimes_{B}T= \nonumber \\
\op{Ext}^1_{A}(T,A^{(\alpha)})\otimes_{B}Be_{1}= \nonumber \\
\op{Ext}^1_{A}(T,A^{(\alpha)})e_{1}= \nonumber \\
\left[\begin{array}{cc} \op{Ext}^{1}_{A}(A,A^{(\alpha)}) & \op{Ext}^{1}_{A}(X,A^{(\alpha)})\end{array}\right]e_{1}= \nonumber \\
\left[\begin{array}{cc} 0 & \op{Ext}^1_{A}(X,A^{(\alpha)})\end{array}\right]\cdot\left[\begin{array}{cc} 1 & 0 \\ 0 & 0\end{array}\right]=0. \nonumber
\end{align}

5) Let $A$ be a right N\oe therian and right hereditary algebra, let $E(A)$ be an injective envelope of $A$ in the category of right $A$-modules. Assume that 
\[
\Hom_{A}(E(A)/A,E(A))=0
\] 
and there is an indecomposable direct summand $I_{0}$ of $E(A)/A$ with infinite multiplicity. Let $I$ be the direct sum of one isomorphic copy of each indecomposable direct summand of $E(A)/A$. Note that the fact that $A$ is right hereditary implies that $E(A)/A$ is injective, and so $I$ is also injective. The injective module
\[T=E(A)\oplus I
\]
is clearly $1$-tilting. Moreover, we have
\[B=\End_{A}(T)\cong\left[\begin{array}{cc} \End_{A}(E(A)) & 0 \\ \Hom_{A}(E(A),I) & \End_{A}(I)\end{array}\right],
\]
and the exact sequence
\[0\ra A\ra E(A)\ra E(A)/A\ra 0
\]
of right $A$-modules yields an exact sequence
\[0\ra \Hom_{A}(E(A)/A,I)\ra \Hom_{A}(E(A),T)\ra T\ra 0
\]
of left $B$-modules, with
\[\Hom_{A}(E(A),T)\cong Be_{1},
\]
where
\[e_{1}=\left[\begin{array}{cc} 1 & 0 \\ 0 & 0\end{array}\right].
\]
In particular, $Be_{1}$ is a projective left $B$-module, and if $T$, regarded as a left $B$-module, had a projective resolution with finitely generated terms, then the left $B$-module $\Hom_{A}(E(A)/A,T)$ would be finitely generated. Fix now an epimorphism
\[p: B^{n}\ra \Hom_{A}(E(A)/A,T)
\]
of left $B$-modules. Fix also a decomposition
\[E(A)/A\cong I_{0}^{(\N)}\oplus E'.
\]
Note that $I_{0}$ is a direct summand of $T$, and let $e: T: \twoheadrightarrow I_{0}\hookrightarrow T$ be the corresponding idempotent of $B$. Thus, for each finite subset $F$ of $\N$, we have that
\[(Be)^{(F)}\cong\Hom_{A}(I_{0}^{(F)},T)
\]
is a projective direct summand of $\Hom_{A}(E(A)/A,T)$, and hence, also a direct summand of $B^n$. Therefore, 
\[I_{0}^{(F)}\cong\Hom_{B^\text{op}}((Be)^{(F)},T)
\]
is a direct summand of $\Hom_{B^\text{op}}(B^n,T)\cong T^n$ in the category of right $A$-modules. But this is a contradiction for it is well-known that $T$ is a direct sum of modules with local endomorphism ring (see \cite[Theorem 3.48 and 3.52]{Lam}). Then due to Azumaya's theorem (see \cite[12.6]{AF} and \cite[Remark after Corollary 3.53]{Lam}), the decomposition of $T^n$ into a direct sum of indecomposables is unique and we know that $I_{0}$ appears in it with multiplicity exactly $n$.

It remains to prove the existence of a right N\oe therian right hereditary algebra as the one required in the previous proof. Consider the (first) Weyl algebra 
\[A=A_{1}(k)=k\langle p, q\rangle/(pq-qp-1)
\]
over a field $k$ with characteristic $0$, the vector space $S=k[x]$ of polinomials in one variable with coefficients in $k$, and the $k$-algebra morphism (known as the {\em standard representation})
\[A\ra\End_{k}(S)
\]
which sends $p$ to $d/dx$ and $q$ to multiplication by $x$. It is well-known that $S$ becomes a simple $A$-module with $\End_{A}(S)\cong k$ (see \cite[\S 4.6.3]{Dixmier}). From the fact that $A$ has no non-zero divisor (see \cite[Corollary 7.11.3]{McConnell-Robson}) we deduce that 
\[
\Hom_{A}(S,E(A))=0=\Hom_{A}(E(A)/A,E(A)), 
\]
and so 
\[\Hom_{A}(S,E(A)/A)\arr{\sim}\Ext^1_{A}(S,A).
\]
It remains to check that $\Ext^1_{A}(S,A)$ is infinite dimensional over $k$, for that will mean that $I_{0}=E(S)$ is an indecomposable direct summand of $E(A)/A$ with infinite multiplicity. Indeed, a projective resolution of $S$ is given by
\[\xymatrix{
0\ar[r] & A\ar[r]^{\lambda} & A\ar[r] & S\ar[r] & 0
}
\]
where $\lambda(f)=qf$. Then we get an exact sequence of left $A$-modules
\[\xymatrix{
0=\Hom_{A}(S,A)\ar[r] & A\ar[r]^{\lambda^{\che}} & A\ar[r] & \Ext^{1}_{A}(S,A)\ra 0
}
\]
where $\lambda^{\che}(f)=fq$. Therefore,
\[\Ext^1_{A}(S,A)\cong A/Aq\cong k[x]
\]
is infinite-dimensional.

6) Take any hereditary Artin algebra $A$ and let $T$ be a finitely generated projective $A$-module which is not a generator of the category of right $A$-modules. Then condition a) of part 4) of Corollary~\ref{1tilting inducing ff} is not satisfied and so $\RHom_{\cd A}(T,?):\cd A\ra\cd B$ is not fully faithful. On the other hand, $B$ is also a hereditary Artin algebra and $T$ is a finitely generated left $B$-module. Therefore, as a left $B$-module, $T$ has a projective resolution consisting of finitely generated terms.
%


\end{proof}

\begin{proposition}\label{tensoring preserves products}
Let $B$ be an ordinary algebra and let $T$ be a complex of left $B$-modules such that $H^pT=0$ for $p\gg 0$. The following assertions are equivalent:
\begin{itemize}
\item[1)] $T$ is quasi-isomorphic to a right bounded complex of finitely generated projective left $B$-modules.
\item[2)] The canonical map $B^\alpha\otimes^\L_{B}T\ra T^\alpha$ is an isomorphism in $\cd(B^\text{op})$ for each cardinal $\alpha$.
\end{itemize}
\end{proposition}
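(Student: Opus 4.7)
For $1) \Rightarrow 2)$: pick a quasi-isomorphism $P^\bullet \to T$ with $P^\bullet$ right bounded of finitely generated projective terms. Since $P^\bullet$ is $\ch$-projective, $B^\alpha \otimes^\L_B T$ is computed degreewise as $B^\alpha \otimes_B P^\bullet$. Each $P^i$ is a summand of some $B^n$, so the canonical map $B^\alpha \otimes_B P^i \to (P^i)^\alpha$ is an isomorphism. As direct products are exact in $\Mod(B^\text{op})$, the componentwise product $(P^\bullet)^\alpha$ represents $T^\alpha$ in $\cd(B^\text{op})$, and the comparison map is identified with an isomorphism.

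For $2) \Rightarrow 1)$: set $n_0 = \max\{p : H^pT \neq 0\}$ (the case $T = 0$ is trivial). The plan is to build $P^\bullet$ by a Postnikov-type induction producing a complex of finitely generated projectives right bounded by $n_0$. The first key observation is that $H^{n_0}(T)$ is finitely presented: from the canonical truncation triangle $\tau_{<n_0}T \to T \to H^{n_0}(T)[-n_0] \to$, applying $B^\alpha \otimes^\L_B ?$ and taking $H^{n_0}$ yields
\[
B^\alpha \otimes_B H^{n_0}(T) \;\cong\; H^{n_0}(B^\alpha \otimes^\L_B T) \;\cong\; H^{n_0}(T^\alpha) \;=\; H^{n_0}(T)^\alpha,
\]
where the first isomorphism uses $H^p(B^\alpha \otimes^\L_B \tau_{<n_0}T) = 0$ for $p \geq n_0$ in the long exact sequence of the tensored triangle, and the middle one is the hypothesis. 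Lenzing's criterion then identifies $H^{n_0}(T)$ as finitely presented.

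Inductive step: suppose a bounded complex $Q_k = (P^{n_0-k} \to \cdots \to P^{n_0})$ of finitely generated projectives together with a morphism $Q_k \to T$ have been built, whose cone $T_{k+1}$ satisfies $H^p T_{k+1} = 0$ for $p \geq n_0 - k$. Comparing the triangle $Q_k \to T \to T_{k+1} \to$ with its $\alpha$-fold product under $B^\alpha \otimes^\L_B ?$, the outer arrows are isomorphisms (on $Q_k$ by the finitely generated projective case, on $T$ by hypothesis), so by the 5-lemma for triangles $T_{k+1}$ again satisfies condition $2)$. Claim A thus gives $H^{n_0-k-1}(T_{k+1})$ finitely presented; pick $P^{n_0-k-1}$ finitely generated projective surjecting onto it, and lift via the identification
\[
\Hom_{\cd(B^\text{op})}(P^{n_0-k-1}[-n_0+k+1], T_{k+1}) \;\cong\; \Hom_B(P^{n_0-k-1}, H^{n_0-k-1}(T_{k+1})),
\]
coming from projectivity of $P^{n_0-k-1}$. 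The octahedral axiom applied to $T \to T_{k+1} \to T_{k+2}$ produces a triangle $Q_{k+1} \to T \to T_{k+2} \to$ and an extension triangle $Q_k \to Q_{k+1} \to P^{n_0-k-1}[-n_0+k+1] \to Q_k[1]$, whose connecting morphism, again by the $\Hom_\cd$ computation, is encoded by a module map $P^{n_0-k-1} \to P^{n_0-k}$ that composes to zero with the next differential of $Q_k$. This realizes $Q_{k+1}$ as the concrete complex $(P^{n_0-k-1} \to P^{n_0-k} \to \cdots \to P^{n_0})$. Setting $P^\bullet = \bigcup_k Q_k$ produces the desired right bounded complex of finitely generated projectives, and a compatible cochain representative of $P^\bullet \to T$ (exploiting $\ch$-projectivity of each $Q_k$) is a quasi-isomorphism, as any fixed $p$ is reached by $H^p(Q_k) \cong H^p(T)$ once $k \geq n_0 - p$.

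The main obstacle is turning the Postnikov tower into a strict cochain-level model: producing $P^\bullet$ as a genuine complex with an honest map of complexes to $T$. This rests on the two uses of $\Hom_{\cd(B^\text{op})}(P[-n], ?) \cong \Hom_B(P, H^n(?))$ for projective $P$ (to realize connecting morphisms as module-level differentials with $d^2 = 0$), and on $\ch$-projectivity of each $Q_k$ (to choose cochain representatives of $Q_k \to T$ compatible under the inclusions $Q_k \hookrightarrow Q_{k+1}$).
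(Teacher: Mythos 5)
Your proof is correct and takes essentially the same route as the paper's: both hinge on the Stenstr\"om--Lenzing criterion that a module $M$ is finitely presented iff the canonical map $B^\alpha\otimes_BM\to M^\alpha$ is bijective for all $\alpha$, and on the same five-lemma-on-triangles step that splits off the already-built finite complex of finitely generated projectives to deduce that the next homology (resp.\ cokernel) is finitely presented. The only difference is packaging: you run the induction via cones/Postnikov towers in $\cd(B^\text{op})$ and strictify at the end (which does go through, by extending each homotopy by zero over the newly added degree), whereas the paper builds an inverse system of chain-level quasi-isomorphisms using stupid truncations and explicit bicartesian squares.
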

\begin{proof}
$1)\Rightarrow 2)$ Without loss of generality, we can take the complex $P$ of finitely generated projective left $B$-modules in 1) vanishing in degrees $\geq 1$. Let
\[f:P\ra T
\]
be a quasi-isomorphism. Since $P$ is homotopically projective, we have a commutative square in the category of complexes of $B$-modules
\[\xymatrix{
\hspace{1.5cm}B^{\alpha}\otimes_{B}P=B^\alpha\otimes^\L_{B}P\ar[r]\ar[d]_{\id\otimes^\L f} & P^\alpha\ar[d]^{f^\alpha} \\
B^\alpha\otimes^\L_{B}T\ar[r] & T^\alpha
}
\]
It is clear that $f^\alpha$ and $\id\otimes^\L f$ are isomorphisms in $\cd(B^\text{op})$. Hence, the horizontal morphism in the bottom is an isomorphism if and only if so is the horizontal one in the top, which is the case as one can deduce from \cite[Lemma I.13.2]{Stenstrom}.

$2)\Rightarrow 1)$ Without loss of generality, we can assume that $H^pT=0$ for $p>0$. Thus $T$ is quasi-isomorphic to its canonical truncation $\tau_{\leq 0}T$, which, thanks to Lemma~\ref{preserving acyclics under tensors}, also satisfies that the map
\[B^\alpha\otimes^\L_{B}(\tau_{\leq 0}T)\ra (\tau_{\leq 0}T)^\alpha
\]
is a quasi-isomorphism for each cardinal $\alpha$. Note that $\tau_{\leq 0}T$ is quasi-isomorphic to a complex $P$ of projective left $B$-modules such that $P^n=0$ for $n>0$, and thus $P$ also satisfies that $B^\alpha\otimes^\L_{B}P\ra P^\alpha$ is a quasi-isomorphism for each cardinal $\alpha$. Since $P$ is an $\ch$-projective left dg $B$-module, then $B^\alpha\otimes^\L_{B}P=B^\alpha\otimes_{B}P$. Since $B^\alpha\otimes_{B}P\ra P^\alpha$ is a quasi-isomorphism, we have that a commutative square in the category of left $B$-modules,
\[\xymatrix{
H^0(B^\alpha\otimes_{B}P)\ar[r]^{\sim} & H^0(P^\alpha)\ar[d]^{\wr} \\
B^\alpha\otimes_{B}H^0(P)\ar[r]\ar[u]^{\wr} & H^0(P)^\alpha
}
\]
This implies that $B^\alpha\otimes_{B}H^0(P)\ra H^0(P)^\alpha$ is an isomorphism for each cardinal $\alpha$, and so, after \cite[Lemma I.13.2]{Stenstrom}, we deduce that $H^0(P)$ is a finitely presented left $B$-module. Therefore, we can replace $P$ by a quasi-isomorphic complex $P_{0}$ of projective left $B$-modules such that $P_{0}^n=0$ for $n>0$ and $P_{0}^0$ is finitely generated.

In what follows we inductively construct a sequence
\[\dots\ra P_{2}\arr{f_{2}}P_{1}\arr{f_{1}}P_{0}
\]
of quasi-isomorphisms of complexes of left $B$-modules satisfying:
\begin{itemize}
\item[a)] $P_{n}^{i}=0$ for all $i>0$,
\item[b)] $P_{n}^{i}$ is finitely generated and projective for each $-n\leq i\leq 0$,
\item[c)] $P_{n}^{i}=P_{n-1}^{i}$ and $f_{n}^{i}:P_{n}^{i}\ra P_{n-1}^{i}$ is the identity map for $-n+1\leq i\leq 0$.
\end{itemize}
Once this sequence has been constructed, we take the limit $Q=\op{lim}_{n\geq 0}P_{n}$ in the category of complexes of left $B$-modules. In fact, we have an explicit description of it:
\[
\xymatrix{
Q: \dots\ra P_{2}^{-2}\ar[r]^{\hspace{0.7cm}d_{2}^{-2}} & P_{1}^{-1}\ar[r]^{\hspace{0.2cm}d_{1}^{-1}} & P_{0}^0\ar[r] & 0\ar[r] & 0\ar[r] & \dots
}
\]
Note that the maps $Q\ra P_{n}\ko n\geq 0$, are quasi-isomorphisms, because for each $i\leq 0$ the map $Q^{i}_{n+1}\ra Q^{i}_{n}$ is the identity for all $n\geq -i$. In particular, $Q$ is a right bounded complex of finitely generated left $B$-modules which is quasi-isomorphic to $P_{0}$, and so quasi-isomorphic to $T$.

Let us construct the sequence of quasi-isomorphisms. Let $n>0$ and suppose the sequence
\[
\xymatrix{
P_{n-1}\ar[r]^{f_{n-1}} & P_{n-2}\ar[r] & \dots\ar[r]^{f_{1}} & P_{0}
}
\]
has been already defined. Put $X=P_{n-1}$ for simplicity. Taking stupid truncations, the induction hypothesis says that $\sigma^{\geq -n+1}X$ is a perfect complex. Considering the triangle
\[\sigma^{\geq -n+1}X\ra X\ra\sigma^{\leq -n}X\ra\Sigma\sigma^{\geq -n+1}X,
\]
we get the following morphism of triangles in the derived category of left $B$-modules:
\[\xymatrix{
B^\alpha\otimes^\L_{B}\sigma^{\geq -n+1}X\ar[d]^{\wr}\ar[r] & B^\alpha\otimes^\L_{B}X\ar[d]^{\wr}\ar[r] & B^\alpha\otimes^\L_{B}\sigma^{\leq -n}X\ar[r]^{\hspace{1cm}+}\ar[d] & \\
(\sigma^{\geq -n+1}X)^\alpha\ar[r] & X^\alpha\ar[r] & (\sigma^{\leq -n}X)^\alpha\ar[r]^{\hspace{0.5cm}+} &
}
\]
Since the implication 1) $\Rightarrow$ 2) of this proposition has been already proved, we know that the leftmost vertical map is an isomorphism. On the other hand, since $X$ is quasi-isomorphic to $T$, the central map is also an isomorphism in $\cd(B^{\text{op}})$. Using the 5-Lemma we can prove that the rightmost vertical map is an isomorphism. In particular, we have isomorphisms
\begin{align}
B^\alpha\otimes_{B}H^{-n}(\sigma^{\leq -n}X)\cong \nonumber \\
\cong H^{-n}(B^\alpha\otimes_{B}\sigma^{\leq -n}X)\cong \nonumber \\
\cong H^{-n}(B^\alpha\otimes^\L_{B}\sigma^{\leq -n}X)\cong  \nonumber \\
\cong H^{-n}((\sigma^{\leq -n}X)^\alpha)\cong \nonumber \\
\cong H^{-n}((\sigma^{\leq -n}X))^\alpha \nonumber
\end{align}
for each cardinal $\alpha$, which implies that
\[
H^{-n}((\sigma^{\leq -n}X))=\op{coker}(X^{-n-1}\ra X^{-n})
\]
is a finitely presented left $B$-module. We can now construct a quasi-isomorphism 
\[g:Y\ra X
\]
where $Y^{i}=X^{i}$ and $g^{i}=\id$ for $-n+1\leq i\leq 0$, the left $B$-module $Y^{i}$ is projective for each $i\leq 0$, and $Y^{-n}$ is finitely generated. Indeed, we fix an epimorphism $p: Y^{-n}\ra H^{-n}X$ with $Y^{-n}$ finitely generated projective, and take a lift $f: Y^{-n}\ra X^{-n}$:
\[\xymatrix{
&&& Y^{-n}\ar@{->>}[dd]^{p}\ar@{.>}[dl]_{f}& \\
X^{-n-1}\ar[rr]\ar@{->>}[dr] && X^{-n}\ar[rr]\ar@{->>}[dr] && X^{-n+1} \\
& B^{-n}\ar@{^(->}[ur] && H^{-n}X &
}
\]
Consider now the following commutative diagram with bicartesian squares
\[\xymatrix{M\ar[d]\ar@{->>}[r] & f^{-1}(B^{-n})\ar[d]\ar@{^(->}[r] & f^{-1}(Z^{-n})\ar[d]\ar@{^(->}[r] & Y^{-n}\ar@{->>}[d]^{f} \\
X^{-n-1}\ar@{->>}[r] & B^{-n}\ar@{^(->}[r] & Z^{-n}\ar@{^(->}[r] & X^{-n}
}
\]
Since
\[Z^{-n-1}=\ker(d^{-n-1})\cong\ker(M\ra f^{-1}(B^{-n}))
\]
we get another bicartesian square
\[\xymatrix{\tilde{Z}^{-n-1}\ar[r]\ar@{->>}[d] & Y^{-n-1}\ar@{->>}[d]^{\varepsilon} \\
Z^{-n-1}\ar@{^(->}[r] & M
}
\]
where $\varepsilon$ is an epimorphism and $Y^{-n-1}$ is projective. All this information get assembled in a commutative diagram
\[\xymatrix{
\tilde{Z}^{-n-1}\ar@{^(->}[dr]\ar@{->>}[ddd]&&&&& \\
& Y^{-n-1}\ar[rr]\ar@{->>}[dr]_{\varepsilon} && Y^{-n}\ar[ddd]^{f}\ar[r] & X^{-n+1}\ar[r]\ar[ddd]^{\id} & \dots \\
&& M\ar[ddl]\ar[ur] &&& \\
Z^{-n-1}\ar@{^(->}[dr] &&&&& \\
& X^{-n-1}\ar[rr] && X^{-n}\ar[r] & X^{-n+1}\ar[r] & \dots
}
\]
It is clear how to continue taking pullbacks and going to the left in order to construct the desired quasi-isomorphism $g:Y\ra X$. We then put $P_{n}=Y$ and $f_{n}=g$.
\end{proof}

\end{document}